\documentclass[11pt]{amsart}  

\usepackage{amsmath}
\usepackage{amsthm}
\usepackage{amsfonts}
\usepackage{amssymb}
\usepackage{eucal}
\usepackage[all]{xy}
\usepackage{amsxtra}  
\usepackage[pdftex, breaklinks, linktocpage=true, bookmarksopen=true,bookmarksopenlevel=0,bookmarksnumbered=true]{hyperref}

\hypersetup{
pdftitle = {Canonical extensions of N\'eron models of Jacobians},
pdfsubject = {Canonical extensions of N\'eron models of Jacobians},
pdfkeywords = {Integral structure, de~Rham cohomology, abelian variety, Jacobian, N\'eron model, canonical extension, rigidified extensions, group scheme, picard functor}
pdfauthor = {\textcopyright\ Bryden Cais},
pdfcreator = {\LaTeX\ with package \flqq hyperref\frqq},
colorlinks = {true}
}

   \topmargin=0in
   \oddsidemargin=-0.1in
   \evensidemargin=-0.1in
   \textwidth=6.7in
   \textheight=8.2in

\xymatrixcolsep{2.8pc}
\CompileMatrices


\DeclareFontFamily{OT1}{rsfs}{}
\DeclareFontShape{OT1}{rsfs}{n}{it}{<-> rsfs10}{}
\DeclareMathAlphabet{\mathscr}{OT1}{rsfs}{n}{it}

\makeatletter
\newcommand{\rmnum}[1]{\romannumeral #1}
\newcommand{\Rmnum}[1]{\expandafter\@slowromancap\romannumeral #1@}
\makeatother


\DeclareMathOperator{\Ann}{Ann}
\DeclareMathOperator{\depth}{depth}

\DeclareMathOperator{\id}{id}

\DeclareMathOperator{\Frac}{Frac}
 
\DeclareMathOperator{\Sym}{Sym} 
 
\DeclareMathOperator{\pr}{pr}
 
\DeclareMathOperator{\Hom}{Hom}
\DeclareMathOperator{\End}{End}
\DeclareMathOperator{\Ext}{Ext}

\DeclareMathOperator{\Spec}{Spec}

\DeclareMathOperator{\dR}{dR}

\DeclareMathOperator{\Pic}{Pic}
\DeclareMathOperator{\Alb}{Alb}
\DeclareMathOperator{\sm}{sm}
\DeclareMathOperator{\Extrig}{Extrig}
\DeclareMathOperator{\Lie}{Lie}
\DeclareMathOperator{\Inf}{Inf}

\DeclareMathOperator{\Char}{char}

\DeclareMathOperator{\sh}{sh}

\newcommand*{\R}{\ensuremath{\mathbf{R}}}              
\newcommand*{\Z}{\ensuremath{\mathbf{Z}}}               
\newcommand*{\Q}{\ensuremath{\mathbf{Q}}}                           

\newcommand*{\Kbar}{\overline{K}}    
          

\newcommand*{\Gm}{\ensuremath{{\mathbf{G}_m}}}   
\newcommand*{\Ga}{\ensuremath{{\mathbf{G}_a}}}   

\newcommand*{\m}{\mathfrak{m}}


\newcommand*{\E}{\mathscr{E}}     
\newcommand*{\F}{\mathscr{F}}
\newcommand*{\G}{\mathscr{G}}

\renewcommand*{\L}{\mathscr{L}}
\newcommand*{\M}{\mathscr{M}}
  
\renewcommand*{\O}{\mathscr{O}}


\newcommand*{\scrHom}{\mathscr{H}\mathit{om}}      
\newcommand*{\scrExtrig}{\mathscr{E}\mathit{xtrig}}	
\newcommand*{\scrExt}{\mathscr{E}\mathit{xt}}               
\newcommand*{\scrLie}{\mathscr{L}\mathit{ie}}                


\renewcommand*{\H}{\ensuremath{\mathbf{H}}}
 
\newcommand*{\dual}{\vee}

\newcommand*{\Dual}[1]{\widehat{#1}}

\renewcommand*{\int}{\ensuremath{\mathrm{int}}}


\theoremstyle{plain}
  \newtheorem{theorem}{Theorem}
  \newtheorem{proposition}[theorem]{Proposition}
  \newtheorem{lemma}[theorem]{Lemma}
  \newtheorem{corollary}[theorem]{Corollary}

\theoremstyle{definition}
  \newtheorem{definition}[theorem]{Definition}

  \newtheorem{question}[theorem]{Question}

\theoremstyle{remark}
  
  \newtheorem{remark}[theorem]{Remark}
  \newtheorem{remarks}[theorem]{Remarks}

\include{header}

\numberwithin{theorem}{section}  
\numberwithin{equation}{section}

\begin{document}
\bibliographystyle{amsplain_noMR}

\title[{Canonical extensions of N\'eron models of Jacobians}]{Canonical extensions of N\'eron models of Jacobians}

\author{Bryden Cais}
\address{Centre de recherches math\'ematiques, Montr\'eal}
\curraddr{Dept. of Mathematics and Statistics,  805 Sherbrooke St. West, Montr\'eal, QC, Canada, H3A 2K6}
\email{bcais@math.mcgill.ca}
\thanks{This work was partially supported by the NSF grant DMS-0502170 and by a Rackham Predoctoral Fellowship.}

\subjclass[2000]{Primary: 14L15;  Secondary: 14H40, 14K30, 11G20, 14F40, 14F30}
\keywords{Canonical extensions, N\'eron models, Jacobians, relative Picard functor, group schemes, Grothendieck's pairing, Grothendieck duality.}
\date{\today}

\begin{abstract}
	Let $A$ be the N\'eron model of an abelian variety $A_K$ over the fraction field 
	$K$ of a discrete valuation ring $R$.  Due to work of Mazur-Messing, there is a functorial
	way to prolong the universal extension of $A_K$ by a vector group to a smooth and separated 
	group scheme over $R$, called the {\em canonical extension of $A$}.  
	In this paper, we study the canonical extension when $A_K=J_K$ is the Jacobian
	of a smooth proper and geometrically connected curve $X_K$ over $K$.  Assuming that 
	$X_K$ admits a proper flat regular model $X$ over $R$ that has generically smooth closed fiber,
	our main result identifies the identity component of the canonical extension with a certain functor  
	$\Pic^{\natural,0}_{X/R}$ classifying line bundles on $X$ that have partial degree zero on all components of geometric fibers and are equipped with
	a {\em regular connection}.  This result is a natural extension of a theorem of Raynaud, which
	identifies the identity component of the N\'eron model  $J$ of $J_K$ with the functor $\Pic^0_{X/R}$.
	As an application of our result,
	we prove a comparison isomorphism between two canonical integral structures on the de~Rham cohomology
	of $X_K$.
\end{abstract}

\maketitle
\section{Introduction}\label{intro}

	  Fix a discrete valuation ring $R$ with field of fractions $K$ and residue field $k$.
	  Let $A_K$ be an abelian variety over $K$ and consider the universal extension $E(\Dual{A}_K)$ of the dual abelian variety $\Dual{A}_K$.
	 This commutative algebraic $K$-group is an extension of $\Dual{A}_K$ by the vector group of invariant differentials on $A_K$
	  \begin{equation}
		\xymatrix{
			0\ar[r] & {\omega_{A_K}} \ar[r] & {E(\Dual{A}_K)} \ar[r] & {\Dual{A}_K} \ar[r] & 0
			}\label{universal}
	\end{equation}   
	 and is universal among extensions of $\Dual{A}_K$ by a vector group:  for any vector group $V$ over $K$, the natural homomorphism
	 $\Hom(\omega_{A_K},V)\rightarrow \Ext(\Dual{A}_K,V)$ arising by pushout from (\ref{universal}) is an isomorphism.	
	  The theory of the universal extension was initiated by Rosenlicht \cite{Rosenlicht}, who defined the notion and showed its existence for abelian varieties,
	  and subsequently taken up by Tate \cite{Tate}, Murre \cite{Murre}, Grothendieck \cite{GrothBarsotti}, Messing \cite{Messing}, \cite{Messing2} and Mazur-Messing \cite{MM}.  
	  It plays a central role in the definition of the Mazur-Tate $p$-adic height pairing \cite{MazTate}, \cite{coleman-biext}, 
	  in Deligne's definition of the duality on the de~Rham cohomology 
	  of $A_K$ \cite[\S 10.2.7.3]{DeligneHodge3} (see also the articles of Coleman \cite{coleman-biext}, \cite{colemanduality}), 
	  and in certain proofs of the comparison isomorphism between the  $p$-adic \'etale and de~Rham cohomologies
	  of $A_K$ \cite{FMUnpub}, \cite[Note added in proof]{ColemanComparison}, \cite{Wintenberger}.
	    	
	  As is well-known, the N\'eron model $\Dual{A}$ of $\Dual{A}_K$ over $R$ provides a functorial extension of $\Dual{A}_K$ to a smooth commutative
	  group scheme over $R$, and it is natural to ask if (\ref{universal}) can be functorially extended 
	  to a short exact sequence of smooth commutative $R$-groups as well. 
	  Such an extension is provided by the ``canonical extension" $\E(\Dual{A})$ of $\Dual{A}$, introduced by Mazur and Messing in \cite[I,\S5]{MM}.
	     When $\Dual{A}_K$ has good reduction, $\E(\Dual{A})$ coincides with the universal extension of (the abelian scheme) $\Dual{A}$ by a vector 
	  group, but in general, as an example of Breen and Raynaud shows ({\em cf.} Remarks \ref{unicaneq}), N\'eron models
	  need not have universal extensions, and $\E(\Dual{A})$ seems to be the best substitute in such cases.  
	    Although they seem to be of fundamental importance, canonical extensions of N\'eron models
	    have been little studied, and as far as we know, do not appear anywhere in the literature beyond their 	
	    introduction in \cite{MM} and in \cite[\S15]{tameness}.
	     
	    In this paper, we study the canonical extension $\E(\Dual{A})$ when $A_K=J_K$ is the 
	    the Jacobian of a smooth proper and geometrically connected curve $X_K$ over $K$.
	      In this situation, a famous theorem of Raynaud (see \cite[\S 9.7, Theorem 1]{BLR}) relates the identity component 
	      $\Dual{J}^0$ of $\Dual{J}$ to the relative Picard functor of any proper flat and normal model $X$ of $X_K$ that is ``sufficiently nice":
	           \begin{theorem}[Raynaud]\label{Rthm}
	           	Let $S=\Spec R$ and fix a proper flat and normal model $X$ of $X_K$ over $S$.  Denote by $X_1,\ldots,X_n$
				the $($reduced$)$ irreducible components of the closed fiber $X_k$.  
				Suppose that the greatest common divisor of the geometric multiplicities of the $X_i$ in $X_k$ is equal to $1$,
				and assume either that $k$ is perfect or that $X$ admits an \'etale quasi-section.
			Then $\Pic^0_{X/S}$ is a smooth and separated $S$-group scheme and $J_K$ admits a N\'eron model $J$ of finite type.
				Moreover, the canonical morphism
			\begin{equation}
				\xymatrix{
					{\Pic^0_{X/S}} \ar[r] & {\Dual{J}^0}
				}\label{Rthmmap}
			\end{equation}
				arising via the N\'eron mapping property from the canonical principal polarization of $J_K$
			is an isomorphism if and only if $X$ has rational singularities.\footnote{
				Recall that $X$ is said to have {\em rational singularities} if it admits a resolution of singularities
				 $\rho:X'\rightarrow X$ with $R^1\rho_*\O_{X'}=0$.  Trivially, any regular $X$ has rational
				 singularities.}   
	  \end{theorem}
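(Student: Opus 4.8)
The plan is to handle the three assertions separately — representability and smoothness of $\Pic^0_{X/S}$, existence of the N\'eron model together with the construction of the morphism (\ref{Rthmmap}), and finally the criterion for (\ref{Rthmmap}) to be an isomorphism — reserving most of the effort for the last.

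First I would establish that $\Pic_{X/S}$ is representable. The essential input is Raynaud's criterion for cohomological flatness: for a proper flat normal model with smooth geometrically connected generic fibre, the structure map $f\colon X\to S$ is cohomologically flat in dimension zero precisely when the greatest common divisor of the geometric multiplicities of the $X_i$ equals $1$. The hypothesis that $k$ is perfect or that $X$ carries an \'etale quasi-section is exactly what is needed to control the geometric multiplicities and to descend the statement after an \'etale base change, where a section becomes available and line bundles can be rigidified. Granting cohomological flatness, the representability theorem for the relative Picard functor (Grothendieck, Artin, Raynaud) presents $\Pic_{X/S}$ as a separated algebraic space over $S$, and since the fibres have dimension one, this algebraic space is in fact a scheme. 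Smoothness is then a deformation-theoretic matter: the obstructions to deforming a line bundle lie in $H^2(X_s,\O_{X_s})$, which vanishes because the fibres are curves, so $\Pic_{X/S}$ is smooth. Passing to the open identity component yields the smooth, separated, finite-type $S$-group $\Pic^0_{X/S}$.

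Next I would produce the N\'eron model and the canonical map. Once $J_K=\Pic^0_{X_K/K}$ is known to admit the smooth, separated, finite-type model $\Pic^0_{X/S}$, the existence of a N\'eron model $\Dual{J}$ of finite type follows from the standard theory (N\'eron's smoothening applied to this model, using that semiabelian reduction is attained after a finite base change and descends). The morphism (\ref{Rthmmap}) then arises formally: the canonical principal polarization identifies the generic fibre $\Pic^0_{X_K/K}=J_K$ with $\Dual{J}_K$, and the N\'eron mapping property of $\Dual{J}$ extends this generic isomorphism to a homomorphism $\Pic^0_{X/S}\to\Dual{J}$ over all of $S$; since the source has connected fibres, the image lands in $\Dual{J}^0$. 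By construction this is a homomorphism of smooth separated $S$-groups that is an isomorphism on generic fibres.

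The hard part is the equivalence with rational singularities, and this is where I expect the main obstacle. A homomorphism of smooth separated finite-type $S$-group schemes with connected fibres that is an isomorphism on generic fibres is an isomorphism if and only if it is so on closed fibres, which reduces the problem to the special fibre. I would fix a resolution $\rho\colon X'\to X$ and compare $\Pic^0_{X/S}$ with $\Pic^0_{X'/S}$ via the pullback $\rho^{*}$. On Lie algebras the Leray spectral sequence for $\rho$ and the structure sheaf gives the exact sequence
\begin{equation*}
0\to H^1(X,\O_X)\to H^1(X',\O_{X'})\to H^0(X,R^1\rho_*\O_{X'})\to H^2(X,\O_X),
\end{equation*}
whose final term vanishes because $X$ is a relative curve over the affine base $S$; hence $\rho^{*}$ is an isomorphism on Lie algebras precisely when $R^1\rho_*\O_{X'}=0$, that is, precisely when $X$ has rational singularities. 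The genuinely deep ingredient — and the crux of the theorem — is the identification, for the \emph{regular} model $X'$, of $\Pic^0_{X'/S}$ with $\Dual{J}^0$; this rests on the autoduality of the Jacobian and on Grothendieck duality for the possibly non-reduced closed fibre, together with a careful analysis of the component groups. Granting this identification, the canonical map (\ref{Rthmmap}) becomes, up to the isomorphism $\Pic^0_{X'/S}\cong\Dual{J}^0$, the pullback $\rho^{*}$, and the displayed sequence shows it is an isomorphism if and only if $R^1\rho_*\O_{X'}=0$. Establishing the regular-model identification and controlling the comparison along $\rho$ uniformly in the component groups is the step I expect to require the most work.
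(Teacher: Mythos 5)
The paper itself offers no proof of this statement: it is quoted as Raynaud's theorem with a pointer to \cite[\S 9.7, Theorem 1]{BLR}, so your proposal has to be measured against the argument in the literature (BLR, Ch.~9, following Raynaud). Against that standard it has genuine gaps. The most basic one is in your first step: the full functor $\Pic_{X/S}$ is \emph{not} a separated algebraic space under these hypotheses, and your plan to deduce the separatedness of $\Pic^0_{X/S}$ by passing to an open subspace of a separated ambient object cannot work. Already for $X$ regular with reducible closed fiber, the line bundle $\O_X(X_i)$ attached to a vertical component is trivial on the generic fiber but nontrivial on $X$, so the valuative criterion of separatedness fails for $\Pic_{X/S}$. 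This non-separatedness is not a technicality to be stepped around: it is the central phenomenon of the subject, and in BLR the N\'eron model $J$ is constructed precisely as the quotient of $\Pic_{X/S}$ by the schematic closure of the unit section, with the separatedness of $\Pic^0_{X/S}$ (where the gcd hypothesis enters) being one of the hard theorems of Raynaud, not a formal consequence.

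The second and decisive gap is that the step you ``grant'' --- the identification $\Pic^0_{X'/S}\simeq \Dual{J}^0$ for a regular model $X'$ --- is not an auxiliary ingredient: since regular schemes trivially have rational singularities, it \emph{is} the theorem for $X'$. What you have written is therefore a reduction of the general case to the regular case via resolution and Leray, not a proof; all of the content (construction of $J$ from the Picard functor and verification of the N\'eron mapping property, occupying BLR \S\S 9.5--9.7) remains. Finally, even granting the regular case, your closing inference ``isomorphism on Lie algebras iff isomorphism'' fails in residue characteristic $p>0$: a homomorphism of smooth connected $k$-groups inducing an isomorphism on Lie algebras is in general only an \'etale isogeny (multiplication by $n$ prime to $p$ on an abelian variety is an example), so a finite \'etale kernel must still be excluded. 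In the situation at hand this can be repaired --- $\rho_*\O_{X'}=\O_X$ because $X$ is normal and $\rho$ is proper birational, so the projection formula makes $\rho^*$ a monomorphism, and an \'etale monomorphism of connected groups is an isomorphism --- but your write-up does not address it, even though the same fact $\rho_*\O_{X'}=\O_X$ is used silently when you write $H^1(X,\O_X)$ as the first term of your Leray sequence.
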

	  		  	
	   Our main result enhances Raynaud's theorem by providing a similar description of the identity component $\E(\Dual{J})^0$ 
	    of the canonical extension $\E(\Dual{J})$ of $\Dual{J}$:
	     
	  \begin{theorem}\label{main}
	  		Let $X$ be a proper flat and normal model of $X_K$ over $S=\Spec R$.  Suppose that the closed fiber of $X$ is
			geometrically reduced and that either $X$ is regular or that $k$ is perfect.		
			Then there is a canonical homomorphism of short exact sequences of smooth group schemes over $S$
		    \begin{equation}
		    	\xymatrix{		
				0\ar[r]  & \omega_{J} \ar[r]\ar[d] & {\E(\Dual{J})^0} \ar[r]\ar[d] &  {\Dual{J}^0} \ar[r]\ar[d] & 0 \\
					0\ar[r]  & {f_*\omega_{X/S}} \ar[r] & {\Pic^{\natural,0}_{X/S}} \ar[r] & {\Pic^0_{X/S}}\ar[r] & 0
				}\label{maindiag}
		\end{equation}
		which is an isomorphism of exact sequences if and only if $X$ has rational singularities.
	  \end{theorem}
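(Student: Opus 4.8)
The plan is to realize the diagram (\ref{maindiag}) by first constructing its lower row as an exact sequence of smooth $S$-group schemes, then producing the vertical arrows from the universal property of the canonical extension, and finally deciding when those arrows are isomorphisms via the five lemma. To build the lower row I would make precise the functor $\Pic^{\natural,0}_{X/S}$, whose $T$-points are pairs $(\L,\nabla)$ consisting of a line bundle $\L$ on $X_T:=X\times_S T$ of partial degree $0$ on the components of geometric fibers together with a \emph{regular connection} $\nabla\colon\L\to\L\otimes_{\O_{X_T}}\omega_{X_T/T}$ along the relative dualizing sheaf. Forgetting $\nabla$ gives the right-hand arrow, whose kernel is the group of regular connections on the trivial bundle, namely the global regular differentials $f_*\omega_{X/S}$; this yields the left-hand arrow and left-exactness. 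The structural point is surjectivity on the right, i.e. that every degree-$0$ bundle admits a regular connection, and here using the dualizing sheaf $\omega_{X/S}$ rather than $\Omega^1_{X/S}$ is essential since $X\to S$ need not be smooth: the relevant Atiyah-type obstruction, valued in a regular de~Rham group, vanishes on the degree-$0$ part. Granting representability by a smooth separated $S$-group scheme, deduced from that of $\Pic_{X/S}$ together with the vector-group extension structure, this produces the lower sequence, the hypothesis of geometrically reduced closed fiber ensuring that $f_*\omega_{X/S}$ is locally free and commutes with base change.

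Next I would identify both rows over the generic fiber. By autoduality of the Jacobian the canonical polarization gives $\Dual{J}_K\cong J_K=\Pic^0_{X_K/K}$; pulling the Poincar\'e bundle back along an Abel--Jacobi embedding $X_K\hookrightarrow J_K$ identifies line bundles with integrable connection on $X_K$ with the universal extension, so that $\Pic^{\natural,0}_{X_K/K}\cong E(\Dual{J}_K)$, while on differentials $f_*\omega_{X_K/K}=H^0(X_K,\Omega^1_{X_K/K})=\omega_{J_K}$. Thus the restriction of the lower sequence to $K$ is canonically the universal extension sequence (\ref{universal}) of $\Dual{J}_K$, matching the top row.

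To construct the vertical arrows over $S$ I would invoke the defining universal property of the canonical extension: $\E(\Dual{J})^0$ prolongs $E(\Dual{J}_K)$ and is initial among smooth prolongations that are extensions by a vector group. Since $\Pic^{\natural,0}_{X/S}$ is exactly such a smooth prolongation whose generic fiber has been identified with $E(\Dual{J}_K)$, the universal property furnishes a unique homomorphism $\E(\Dual{J})^0\to\Pic^{\natural,0}_{X/S}$ extending the generic isomorphism; passing to the kernels and cokernels of the forget-the-connection maps produces the maps $\omega_J\to f_*\omega_{X/S}$ and $\Dual{J}^0\to\Pic^0_{X/S}$ and hence the morphism of exact sequences. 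Under rational singularities this $\Dual{J}^0\to\Pic^0_{X/S}$ will be inverse to Raynaud's map (\ref{Rthmmap}).

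Finally, by the five lemma the middle arrow is an isomorphism if and only if the two outer arrows are. The right arrow $\Dual{J}^0\to\Pic^0_{X/S}$ is an isomorphism exactly when $X$ has rational singularities: this is precisely Raynaud's Theorem~\ref{Rthm} (its map is an isomorphism iff $X$ has rational singularities, and any regular $X$ qualifies). It then remains to treat the left arrow $\omega_J\to f_*\omega_{X/S}$, which I would show is an isomorphism under the standing hypotheses by a Grothendieck-duality comparison for $f\colon X\to S$ of the two natural lattices $\omega_J$ and $f_*\omega_{X/S}$ inside $H^0(X_K,\Omega^1_{X_K/K})$, using geometric reducedness to guarantee base-change compatibility and identifying the discrepancy with $R^1\rho_*\O_{X'}$ for a resolution $\rho\colon X'\to X$; the equivalence then reduces to Raynaud's criterion. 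The main obstacle I anticipate is exactly this integral comparison of kernels together with the integral surjectivity of $\Pic^{\natural,0}_{X/S}\to\Pic^0_{X/S}$: both require controlling the dualizing sheaf and the regular de~Rham cohomology of a possibly singular model through Grothendieck duality, which is the technical heart of the proof.
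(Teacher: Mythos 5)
Your overall architecture (build the bottom row, identify generic fibers, produce vertical arrows, finish with Raynaud's theorem plus the five lemma) matches the paper's, but the central step --- producing the vertical arrows over $S$ --- rests on a property that the canonical extension simply does not have. You invoke ``the defining universal property of the canonical extension: $\E(\Dual{J})^0$ prolongs $E(\Dual{J}_K)$ and is initial among smooth prolongations that are extensions by a vector group.'' No such universal property exists, and it is not the definition: $\E(\Dual{J})$ is \emph{defined} as the representing object of the rigidified-extension functor $\scrExtrig_S(J^0,\Gm)$, and the paper's Remarks \ref{unicaneq} recall the Breen--Raynaud example showing that N\'eron models need not admit universal extensions at all, which is exactly why Mazur--Messing introduced the canonical extension as a substitute. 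Note also that $E(\Dual{J}_K)$ contains the vector group $\omega_{J_K}$, so it admits no N\'eron (lft) model; hence no N\'eron-style mapping property is available for $\E(\Dual{J})^0$ either. What is true is only the \emph{uniqueness} of an extension of a generic-fiber map (flat source, separated target, cf.\ Lemma \ref{fpqctrick}); the \emph{existence} is precisely the hard content, and your proposal contains no mechanism for it.

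The paper supplies that mechanism, and it is where the hypothesis of geometrically reduced closed fiber genuinely enters: one extends the Albanese map $j_{x,K}:X_K\to J_K$ to $j_x:X^{\sm}\to J$ using the N\'eron property of $J$ (not of $\E(\Dual{J})$), pulls back rigidified extensions of $J$ by $\Gm$ along $j_x$ to obtain line bundles with connection on $X^{\sm}$, and then invokes Lemma \ref{reduce2U}: because $X\setminus X^{\sm}$ has codimension $\ge 2$ (this is exactly generic smoothness of $X_k$), line bundles with connection on $X^{\sm}$ extend uniquely to line bundles with \emph{regular} connection on $X$, by normality together with the depth properties of $\omega_{X/S}$ (Lemma \ref{dualizing}). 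Descent arguments then remove the assumption that $X_K(K)\neq\emptyset$. In your write-up the reducedness hypothesis is used only for base-change properties of $f_*\omega_{X/S}$, which is a sign the construction is missing; the paper stresses that this hypothesis is needed only for constructing the map (\ref{maindiag}), not for proving it is an isomorphism. Your endgame is essentially sound and close to the paper's: the right vertical arrow composed with Raynaud's map (\ref{Rthmmap}) is negation (checked on generic fibers via \cite[Lemma 6.9]{MilneJacobians}), giving the ``only if'' direction, and under rational singularities the left arrow is identified with the Grothendieck-duality composite (\ref{Rthmtrick}); but beware that this last identification is not a formal ``lattice comparison'': it is verified on generic fibers using the nontrivial compatibility of the evaluation duality on $J_K$ with cup-product duality on $X_K$ via $j^*$ (\cite[Theorem 5.1]{colemanduality}), which your sketch would still need to supply.
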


	Here,  $\omega_{X/S}$ is the relative dualizing sheaf of $X$ over $S$; it is a coherent sheaf of $\O_X$-modules
	that is flat over $S$ and coincides with the sheaf of relative differentials over the smooth locus of $f$ in $X$. 
	We write $f_*\omega_{X/S}$ for the vector group attached to this locally free $\O_S$-module,
	and $\Pic^{\natural,0}_{X/S}$ is the fppf sheaf associated to the functor on $S$-schemes
	that assigns to each $S$-scheme $\varphi:T\rightarrow S$ the set of isomorphism classes of pairs $(\L,\nabla)$, where $\L$ is a line bundle on $X_T$
	whose restriction to all components of each geometric fiber of $X_T$ has degree zero and 
	$\nabla:\L\rightarrow \L\otimes \varphi^*\omega_{X/S}$ is a {\em regular connection} on $\L$ over $T$ (Definition \ref{regconn}).    
	We will show in Theorem \ref{rep} that under the hypotheses of Raynaud's Theorem, $\Pic_{X/S}^{\natural,0}$ is indeed a smooth and separated $S$-scheme, and that
	there is a short exact sequence of smooth groups over $S$ as in the lower row of (\ref{maindiag}).

	    We note that when $f:X\rightarrow S$ is smooth, our notion of regular connection coincides with the familiar notion of
	    connection, and we recover from Theorem \ref{main} the ``well-known" description of the universal extension of a Jacobian
	    of a smooth and proper  curve as the representing object of the functor classifying degree zero line bundles on the curve that are
	    equipped with connection.\footnote{Certainly this result appears in the literature 
	    (see, for example \cite[\S 2]{coleman-ext}), however, we have been unable to find any proof of it.  See, however \cite[\Rmnum{1} \S4]{MM},
	    which proves a result in a similar spirit.}
	    Let us also point out that the hypotheses of Theorem \ref{main} include not only all regular curves over $K$ with semistable reduction but many regular curves which are
	    quite far\footnote{In the sense that they achieve semistable reduction only after a large and wildly ramified extension of $K$.} from having semistable reduction,
	    such as the modular curves $X(N)$ and $X_1(N)$ over $K:=\Q_p(\zeta_{N})$ for {\em arbitrary} $N$ (see Theorems 13.7.6 and 13.11.4 of \cite{KM},
	     which describe proper flat and regular models of $X(N)$ and $X_1(N)$, respectively, over $R=\Z_p[\zeta_N]$ that have generically smooth---even
	     geometrically reduced---closed fibers).

	      It is well-known that the exact sequence of Lie algebras arising from (\ref{universal})
	    is naturally isomorphic to the 3-term Hodge filtration exact sequence of the first de~Rham cohomology of $A_K$ (Proposition \ref{LieUniv}).  Thus, 
	     the Lie algebra of the smooth $R$-group $\E(\Dual{A})$ 
	      provides a canonical $R$-lattice in the $K$-vector space $H^1_{\dR}(A_K/K)$ which is functorial in $K$-morphisms
	    of $A_K$ (due to the N\'eron mapping property of $A$ and the functorial dependence of $\E(\Dual{A})$ on $A$).  When $A$ is an abelian scheme
	      and the maximal ideal of $R$ has divided powers, Mazur-Messing proved 
		 \cite[\Rmnum{2} \S15]{MM} Grothendieck's conjecture \cite[\Rmnum{5} \S5]{GrothBarsotti} that this $R$-lattice is naturally isomorphic to the Dieudonn\'e module of 
	   the associated Barsotti-Tate group $A_k[p^{\infty}]$.  Thus, $\Lie(\E(A))$ provides a natural generalization of the Dieudonn\'e module when $A$
	   is not an abelian scheme.
	   In \cite{CaisDualizing}, for a proper flat and normal $R$-curve $X$, we studied a canonical integral structure $H^1(X/R)$ on $H^1_{\dR}(X_K/K)$
	   (i.e. an $R$-lattice that is functorial in $K$-morphisms of $X_K$) defined in terms of relative dualizing sheaves.
	 	  It is natural to ask how $H^1(X/R)$ compares with the lattice $\Lie( \E(\Dual{J}))$ under the canonical identification
	   $H^1_{\dR}(X_K/K) \simeq H^1_{\dR}(J_K/K)$.  We will prove in Corollary \ref{integralcompare} 
	   that these two lattices coincide  when $X$ verifies the hypotheses of Theorem \ref{main}.

	 We briefly explain the main ideas that underlie the proof of Theorem \ref{main}.  Our first task is 
	 to reinterpret $\E(\Dual{J})^0$ as
	 the representing object of the functor $\scrExtrig_S(J,\Gm)$ on smooth $S$-schemes, {\em \`a la} 
	 Mazur-Messing \cite{MM}.  To do this, we must first show that the functor $\scrExt_S(J,\Gm)$
	 is represented by $\Dual{J}^0$ on smooth $S$-schemes, and by \cite[Proposition 5.1]{BoschComponent} this holds 
	 if and only if Grothendieck's pairing on component groups is perfect.   
	 It follows from results of Bosch and Lorenzini  \cite[Corollary 4.7]{BoschLorenzini} 
	 (see also Proposition \ref{GrPerfJ}) that the hypotheses of Theorem \ref{main} imply
	 the perfectness of Grothendieck's pairing.
	 However, we note that Grothendieck's pairing is {\em not}
	 generally perfect (see Remark \ref{GrPerf}).
	 
	 In \S\ref{dualizingsec}, we construct the exact sequence
	 of smooth $S$-group schemes occurring in the bottom row of (\ref{maindiag}).
	 This is accomplished by Theorem \ref{rep}, whose proof employs \v{C}ech-theoretical techniques to
	 interpret the hypercohomology of the two-term complex $d\log: \O_{X}^{\times}\rightarrow \omega_{X/S}$ in terms 
	 of line bundles with regular connection,
	 and makes essential use of the good cohomological properties of the relative dualizing sheaf and 
	 of Grothendieck duality.
	 A key insight here is that the traditional notion of a connection on a line bundle on a scheme 
	 $X$ over a base $S$ is not well-behaved when $X$ is not $S$-smooth and must be suitably modified as in 
	 Definition \ref{regconn}.
	 With these preliminaries in place, we turn to the proof of Theorem \ref{main} in \S\ref{mainthmpf}.  
	 We must first construct a morphism of
	 short exact sequences of smooth group schemes (\ref{maindiag}).  
	 Our strategy for doing this is as follows.  Passing to an unramified extension
	 of $K$ if need be, we suppose
	 that $X_K$ has a rational point and use it to define an albanese morphism $j_K:X_K\rightarrow J_K$.  
	 The N\'eron mapping property of
	 $J$ allows us to extend $j_K$ to a morphism $j:X^{\sm}\rightarrow J$ on the smooth locus of $f$ in $X$.  
	 By (functorially) pulling back rigidified extensions of $J$ by $\Gm$ along $j$, we get line bundles on $X^{\sm}$
	 with connection.  
	 Via a careful analysis
	 of the relative dualizing sheaf, we show in 
	 Lemma \ref{reduce2U} that a line bundle with connection
	 on $X^{\sm}$ is equivalent to a line bundle with regular connection on $X$; this critically uses
	 our hypothesis that the closed fiber of $X$ is geometrically reduced (equivalently, that $X^{\sm}$
	 is fiber-wise dense in $X$).  From this, we deduce the desired map (\ref{maindiag}).
	 To complete the proof of Theorem \ref{main}, we then ``bootstrap" Raynaud's theorem \ref{Rthm} using duality.  
	 Here, it is essential to know that the canonical evaluation duality between the Lie algebra of $J$ and the 
	 sheaf of invariant differentials
	 on $J$ is compatible via $j$ with the (Grothendieck) duality of $f_*\omega_{X/S}$ and $R^1f_*\O_X$.  
	 Such compatibility may be checked
	 on generic fibers, where it is well-known (e.g. \cite[Theorem 5.1]{colemanduality}). 
	 
	 We remark that when $k$ is perfect, both the short exact sequences of group schemes in the rows of (\ref{maindiag})
	 exist under the less restrictive hypotheses of Theorem \ref{Rthm}; 
	this follows immediately from Propositions \ref{repExtrig} and \ref{GrPerfJ} for the top
	row of (\ref{maindiag}), and from Theorem \ref{rep} for the bottom row.  
	 It is natural to ask if Theorem \ref{main} holds in this generality as well.
	 We do not know the answer to this question, as our construction of the map of
	 short exact sequences of smooth groups in (\ref{maindiag}) seems to require
	 the closed fiber of $X$ to be generically smooth.
	 Indeed, our construction of (\ref{maindiag}) relies on extending an albanese morphism 
	 $X_K\rightarrow J_K$ to some 
	 open subscheme $U$ of $X$ with the property that line bundles with connection on $U$ uniquely 
	 extend to line bundles with regular connection on $X$.  On the one hand, this extension property
	 seems to require $U$ to be fiber-wise dense in $X$ (see Lemma \ref{reduce2U} and Remark \ref{reduce2Ufalse}), while
	 on the other hand one only expects to be able to extend the morphism $X_K\rightarrow J_K$ to 
	 $U=X^{\sm}$.  Thus, we are forced to require that $U=X^{\sm}$ be fiber-wise dense in $X$, {\em i.e.} that
	 $X_k$ be generically smooth (equivalently geometrically reduced).  We note, however, that it is just
	 our construction of the map (\ref{maindiag}) that requires $X$ to have generically smooth closed fiber; 
	 the proof that this map is an isomorphism of exact sequences of group schemes relies only on the weaker
	 hypotheses of Raynaud's Theorem \ref{Rthm}.
	 
	 It is a pleasure to thank Dino Lorenzini, Barry Mazur, and Bill Messing for several 
	 helpful exchanges and conversations.  
	 Many thanks to C\'edric P\'epin, both for pointing out the argument of Proposition \ref{GrPerfJ} to me,
	 and for providing me with a copy of \cite{Pepin}.
	I would especially like to thank Brian Conrad
	for advising my Ph.D. thesis (which was the genesis of this article) and for many clarifications.
	I am very grateful to the referee for making several corrections and for suggesting
	a number of improvements.

\bigskip
\noindent {\bf Conventions and Notation:}

Fix a base scheme $S$.  If $Y$ is any $S$-scheme and $S'\rightarrow S$ is any morphism, we will
often write $Y_{S'}:=Y\times_S S'$ for the base change.  When $S'=\Spec(F)$ is the spectrum of a field,
we will sometimes abuse notation and write $Y_F$ in place of $Y_{S'}$.
We will work with the fppf topology on the categories of $S$-schemes and of smooth $S$-schemes (see
\cite[Exp. IV, \S6.3]{SGA3vol1} or \cite[\S8.1]{BLR}); if $\F$ is any representable functor on one of these categories, we will
also write $\F$ for the representing object.
By an {\em $S$-group scheme $G$} we will always mean a finitely presented flat and separated commutative group scheme over $S$.
As usual, we write $\Ga$ and $\Gm$ for the additive and multiplicative group schemes over $S$.
A {\em vector group} on $S$ is any $S$-group that is Zariski-locally isomorphic to a product of $\Ga$'s. 
Associated to any quasi-coherent $\O_S$-module $\M$ is a sheaf for the fppf topology on $S$-schemes $\varphi:T\rightarrow S$ given by 
$\M(T):=\Gamma(T,\varphi^*\M)$.  When $\M$ is locally free of finite rank,
this fppf sheaf is represented by the vector group $\mathbf{Spec}\left( \Sym_{\O_S}(\M^*)\right)$ where $\M^*$ is the $\O_S$-linear dual of $\M$; 
we will frequently abuse notation and write $\M$ for both the locally free $\O_S$-module and the associated vector group on $S$.  
For any $S$-group $G$ with identity section $e:S\rightarrow G$, we put $\omega_G:=e^*\Omega^1_{G/S}$. 
As usual, for any $S$-scheme $T$ we put $T[\epsilon]:= T\times_{\Z} \Spec(\Z[\epsilon]/\epsilon^2)$, considered as a $T$-scheme via the first projection,
and	 for any fppf sheaf $G$ we write $\scrLie(G)$ for the fppf sheaf of $\O_S$-modules defined (as in \cite[\S1]{LiuNeron}) by
$\scrLie(G)(T) := \ker(G(T[\epsilon])\rightarrow G(T))$.
When $G$ is a smooth group, this agrees with the traditional notion of relative Lie algebra (as a sheaf of $\O_S$-modules).  We
set $\Lie(G):=\scrLie(G)(S)$.

\tableofcontents

\section{Canonical Extensions of N\'eron models}\label{canext}

In this section, following \cite{MM}, we recall the construction and basic properties of the the canonical extension of a N\'eron model,
and we explain how to interpret its identity component via rigidified extensions.

Let $S$ be any base scheme, and fix commutative $S$-group schemes $F$ and $G$.  A {\em rigidified extension}
of $F$ by $G$ over $S$ is a pair $(E,\sigma)$ consisting of an extension $E$ (of fppf sheaves of abelian groups over $S$) of $F$ by $G$
\begin{equation}
	\xymatrix{
		0 \ar[r] & G \ar[r]^-{\iota} & E \ar[r] & F\ar[r] & 0
	}\label{ext}
\end{equation}
and a section $\sigma$ of $S$-pointed $S$-schemes along the first infinitesimal neighborhood of the identity of $F$
\begin{equation}
	\xymatrix{
		{\Inf_S^1(F)} \ar[r]^-{\sigma} & E
		}\label{rig}
\end{equation}
that projects to the canonical closed immersion $\Inf_S^1(F)\rightarrow F$.
Two rigidified extensions $(E,\sigma)$ and $(E',\sigma')$ of $F$ by $G$ are called {\em equivalent} if
there is a homomorphism (necessarily an isomorphism) $\varphi: E\rightarrow E'$ that carries $\sigma$ to $\sigma'$ and makes the diagram
\begin{equation}
	\xymatrix{
		0\ar[r] & G \ar[r]^-{\iota}\ar@{=}[d] & E \ar[r]\ar[d]^-{\varphi} & F \ar[r]\ar@{=}[d] & 0\\
		0\ar[r] & G \ar[r]_-{\iota'} & E' \ar[r] & F \ar[r] & 0
	}\label{extequiv}
\end{equation}
commute.

We denote by $\Extrig_S(F,G)$ the set of equivalence classes of rigidified extensions of $F$ by $G$ over $S$.
This set is equipped with a natural group structure via Baer sum of rigidified extensions (see \cite[\Rmnum{1} \S2.1]{MM}) which makes the
functor on $S$-schemes $T\rightsquigarrow \Extrig_T(F_T,G_T)$ a group functor that is contravariant in the first variable via pullback
(fibered product) and covariant in the second variable via pushout (fibered co-product).  We will write $\scrExtrig_S(F,G)$
for the fppf sheaf of abelian groups associated to this functor.

We will exclusively be concerned with the special case that $G=\Gm$ is the multiplicative group over $S$.  
Note that (by fppf descent) any extension of $F$ by $\Gm$ is automatically representable as $\Gm$ is affine ({\em cf.}
the proof of \cite[\Rmnum{3}, Proposition 17.4]{oort}).
In this context, there is an alternate and more concrete functorial description of the group $\Extrig_S(F,\Gm)$ that we will need for later use.
Fix a choice of generator $\tau$ for the free rank-one $\Z$-module of invariant differentials $\omega_{\Gm}$ of $\Gm$ over $\Z$.
Note that $\tau$ is canonically determined up to multiplication by $\pm 1$.  For any scheme $S$, we will denote the pullback
of $\tau$ to a generator of $\omega_{\Gm}$ simply by $\tau$.  Write $\mathbf{E}_{\tau}(F)(S)$ for the set of equivalence classes of pairs $(E,\eta)$
consisting of an extension $E$ of $F$ by $\Gm$ over $S$ and a global invariant differential $\eta\in \Gamma(S,\omega_{E})$
which pulls back via the given morphism $\iota:\Gm\rightarrow E$ (realizing $E$ as an extension of $F$ by $\Gm$)
to $\tau$ on $\Gm$.  Two pairs $(E,\eta)$ and $(E',\eta')$ are declared to be equivalent if there is a morphism
$\varphi:E\rightarrow E'$ inducing a diagram as in (\ref{extequiv}) and having the property that $\varphi^*\eta'=\eta$.  
We make $\mathbf{E}_{\tau}(F)(S)$ into an abelian group as follows.  Let $(E,\eta)$ and $(E',\eta')$ be two pairs as above, and denote by $E''$ the Baer sum of
$E$ and $E'$.  Writing $\pr,\pr'$ for the projections from $E\times_F E'$ to $E$ and $E'$, and denoting by $q:E\times_FE'\rightarrow E''$ the quotient map,
we claim that there is a unique invariant differential $\eta''$ on $E''$ satisfying 
$$q^*\eta'' = \pr^*\eta + {\pr'}^*\eta'.$$
Indeed, by definition, $E''$ is the cokernel of the skew-diagonal $(\iota,-\iota'):\Gm\rightarrow E\times_F E'$ under which $\pr^*\eta + {\pr'}^*\eta'$
pulls back to zero.  Thus, via the short exact sequence 
\begin{equation*}
	\xymatrix{
		0\ar[r] & \omega_{E''} \ar[r] & \omega_{E\times_F E'} \ar[r] & \omega_{\Gm} \ar[r] & 0
	}
\end{equation*}
(which is left exact since $E\times_F E'\rightarrow E''$ is smooth due to \cite[Exp.~$\mathrm{\Rmnum{6}}_{\mathrm{B}}$, Proposition 9.2 \rmnum{7}]{SGA3vol1}) 
we obtain a unique invariant differential $\eta''$ on $E''$ as claimed.  One easily checks
that under the map $\Gm\rightarrow E''$ induced by either one of the inclusions $(\iota,0), (0,\iota'): \Gm \rightrightarrows E\times_F E'$
(whose composites with $q$ both coincide with the inclusion $\Gm\rightarrow E''$ realizing $E''$ as an extension of $F$ by $\Gm$)
the differential $\eta''$ pulls back to $\tau$.  We define the sum of the classes represented by $(E,\eta)$ and $(E',\eta')$ to be the class represented by
$(E'',\eta'')$.  It is straightforward to verify that this definition does not depend on the choice of representatives, and makes $\mathbf{E}_{\tau}(F)(S)$
into an abelian group.  This construction is obviously contravariantly functorial in $S$ via pullback of extensions and of invariant differentials.

\begin{lemma}\label{extrigaltdesc}
	For any choice of basis $\tau$ of $\omega_{\Gm}$, there is a functorial isomorphism of abelian groups
	\begin{equation*}
		\xymatrix{
			{\Extrig_S(F,\Gm)} \ar[r]^-{\simeq} & {\mathbf{E}_{\tau}(F)(S)}
		}.
	\end{equation*}
\end{lemma}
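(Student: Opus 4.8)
The plan is to exhibit, for a fixed extension $E$ of $F$ by $\Gm$, a natural bijection between the rigidifications $\sigma$ of $E$ and the invariant differentials $\eta\in\Gamma(S,\omega_E)$ lifting $\tau$, compatibly with isomorphisms of extensions, and then to check that this bijection is additive for the respective Baer-sum group laws. The conceptual input is the short exact sequence
\[
0\longrightarrow \omega_F \xrightarrow{\ \pi^*\ } \omega_E \xrightarrow{\ \iota^*\ } \omega_{\Gm}\longrightarrow 0,
\]
whose left-exactness comes, exactly as for the Baer sum constructed above, from the smoothness of the projection $\pi\colon E\to F$, and in which the quotient $\omega_{\Gm}$ is free of rank one on $\tau$. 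Since giving an $\eta$ with $\iota^*\eta=\tau$ amounts to choosing a complement $\O_S\cdot\eta$ to $\omega_F$ inside $\omega_E$, the set of such $\eta$ is canonically a torsor under $\Gamma(S,\omega_F)$, and is in natural bijection with the set of $\O_S$-linear retractions $r\colon\omega_E\to\omega_F$ of $\pi^*$: to a retraction $r$ one attaches the unique $\eta_r\in\ker r$ with $\iota^*\eta_r=\tau$, and conversely to $\eta$ one attaches the projection of $\omega_E=\omega_F\oplus\O_S\cdot\eta$ onto the first factor. This step is pure linear algebra and uses only that $\omega_{\Gm}$ is free of rank one on $\tau$.

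The heart of the argument is to identify rigidifications of $E$ with exactly these retractions. Writing $\Inf^1_S(F)=\Spec_S(\O_S\oplus\omega_F)$ with $\omega_F$ a square-zero ideal, I would first record the standard fact that a morphism of $S$-pointed $S$-schemes $\sigma\colon\Inf^1_S(F)\to E$ is the same datum as an $\O_S$-derivation of $\O_E$ into $\omega_F$ (where $\O_E$ acts through the identity section of $E$), hence the same as an $\O_S$-linear map $\sigma^*\colon\omega_E\to\omega_F$; under this dictionary the canonical immersion $\Inf^1_S(F)\to F$ corresponds to $\id_{\omega_F}$, and composition with $\pi$ corresponds to precomposition with $\pi^*$. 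Thus the requirement that $\sigma$ lift the canonical immersion translates precisely into the identity $\sigma^*\circ\pi^*=\id_{\omega_F}$, i.e.\ into $\sigma^*$ being a retraction. Composing the two bijections yields, for each $E$, a bijection between rigidifications $\sigma$ and differentials $\eta$ lifting $\tau$, and I send the class of $(E,\sigma)$ to the class of $(E,\eta_\sigma)$. Because every construction in sight ($\omega_{(-)}$, the conormal identification, and pullback of differentials) is functorial for an extension isomorphism $\varphi\colon E\to E'$, a $\varphi$ carrying $\sigma$ to $\sigma'$ carries $\eta_\sigma$ to $\eta_{\sigma'}$, i.e.\ $\varphi^*\eta_{\sigma'}=\eta_\sigma$; hence the assignment descends to a well-defined bijection $\Extrig_S(F,\Gm)\to\mathbf{E}_\tau(F)(S)$ lying over the identity on underlying extension classes. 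Functoriality in $S$ is then immediate, since all the sheaves and maps involved commute with base change $T\to S$.

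It remains to verify additivity, and this is the step I expect to be the most laborious. Both group laws are built from the \emph{same} geometry: for classes with underlying extensions $E,E'$ one forms $E\times_F E'$ with its projections $\pr,\pr'$ and the quotient $q\colon E\times_F E'\to E''$ onto the Baer sum. The differential attached to the sum in $\mathbf{E}_\tau(F)(S)$ is characterized by $q^*\eta''=\pr^*\eta+\pr'^*\eta'$, while the rigidification $\sigma''$ of $E''$ is obtained by pulling $\sigma,\sigma'$ back along $\pr,\pr'$ and descending along $q$. I would trace the differential $\eta_{\sigma''}$ through the conormal dictionary and check that it satisfies this same defining relation, which forces $\eta_{\sigma''}=\eta''$; this is a diagram chase using that $\omega_{(-)}$ carries the fibre-product square and the quotient $q$ to the corresponding (co)limit diagrams of cotangent spaces, together with the left-exact sequence already invoked. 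Granting this, the map is an isomorphism of abelian groups, functorially in $S$, which proves the lemma. The only genuinely delicate point is the infinitesimal bookkeeping of the second paragraph—getting the pointedness and lifting conditions to translate \emph{exactly} into ``retraction''—after which everything is formal.
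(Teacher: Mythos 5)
Your proposal is correct and follows essentially the same route as the paper: both identify a rigidification $\sigma$ with a splitting of the invariant-differentials sequence $0\to\omega_F\to\omega_E\to\omega_{\Gm}\to 0$ (the paper phrases this dually, as a splitting of the Lie-algebra sequence $0\to\scrLie(\Gm)\to\scrLie(E)\to\scrLie(F)\to 0$, whereas you stay on the cotangent side), and then use the basis $\tau$ to convert such a splitting into a differential $\eta\in\Gamma(S,\omega_E)$ with $\iota^*\eta=\tau$. As in the paper, the compatibility with Baer sums and with base change is left as a routine verification, so nothing is missing relative to the published argument.
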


\begin{proof}
	 Associated to the extension (\ref{ext}) with $G=\Gm$ is the short exact sequence of Lie algebras
	\begin{equation}
		\xymatrix{
			0\ar[r] & {\scrLie (\Gm) }\ar[r] & {\scrLie (E)} \ar[r] & {\scrLie (F)} \ar[r]  & 0
		}\label{Liesplit}
	\end{equation}	
    	(note that the map $\scrLie (E)\rightarrow \scrLie (F)$ is surjective by \cite[Proposition 1.1 (c)]{LiuNeron}, as $E\rightarrow F$ is smooth).
	We claim that the data of a rigidification on (\ref{ext}) is equivalent to a choice of a splitting of (\ref{Liesplit}).
	Indeed, any map $\sigma: \Inf^1_S(F)\rightarrow E$ necessarily factors through $\Inf^1_S(E)$, so using the natural
	isomorphism $\Inf^1_S(H)\simeq \mathbf{Spec}(\O_S[\omega_H])$ for any smooth group scheme $H$ over $S$ (with $\omega_H$
	the $\O_S$-module of invariant differentials) we obtain a bijection between rigidifications of (\ref{ext}) and  
	sections $\omega_E\rightarrow \omega_F$ to the pullback map $\omega_F\rightarrow \omega_E$.  By the usual duality
	of the $\O_S$-modules $\scrLie(H)$ and $\omega_H$ \cite[Exp.~\Rmnum{2} \S4.11]{SGA3vol1}, this is equivalent to a section $s$ as claimed.  
	 
	 Using $\tau$ to identify the free rank one $\O_S$-module $\scrLie(\Gm)$ with $\O_S$ and
	 thinking of a splitting of (\ref{Liesplit}) as a map $\scrLie (E)\rightarrow \scrLie (\Gm)$ restricting to the identity on $\scrLie (\Gm)$,
	 we see that any such splitting is by duality equivalent 
	 to a global section $\eta \in \Gamma(S,\omega_{E})$ pulling back to $\tau$ in $\Gamma(S,\omega_{\Gm})$.
	One checks that the equivalence $(E,\sigma)\leftrightarrow (E,\eta)$ induces an isomorphism
	of abelian groups $\Extrig_S(F,\Gm)\rightarrow \E_{\tau}(F)(S)$ that is functorial in $S$, as claimed. 
\end{proof}

The following key result shows that the functor $\scrExtrig$ allows one to realize the universal extension of an abelian scheme:

\begin{proposition}[Mazur-Messing]\label{MMunivext}
	Let $A$ be an abelian scheme over an arbitrary base scheme $S$ and denote by $\Dual{A}$ the dual abelian scheme.
	Then the fppf sheaf $\scrExtrig_S(A,\Gm)$
	is a smooth and separated $S$-group scheme.  It sits in a natural short exact sequence of smooth $S$-group schemes
	\begin{equation}
		\xymatrix{
			0\ar[r] & {\omega_{A}} \ar[r] & {\scrExtrig_S(A,\Gm)} \ar[r] & {\Dual{A}} \ar[r] & 0
		}.\label{universalsch}
	\end{equation}
	 Moreover, $(\ref{universalsch})$ is the universal extension of $\Dual{A}$
	by a vector group.
\end{proposition}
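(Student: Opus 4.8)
The plan is to build the sequence (\ref{universalsch}) by analyzing the forgetful map from rigidified extensions to extensions, to establish representability by a descent argument for torsors, and finally to prove universality by reducing to coefficients in $\Ga$ and identifying a coboundary with the canonical Barsotti--Weil isomorphism.

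\emph{Constructing the sequence.} First I would consider the map of fppf sheaves $\scrExtrig_S(A,\Gm)\to\scrExt_S(A,\Gm)$ that forgets the rigidification (\ref{rig}). The relative Barsotti--Weil isomorphism identifies $\scrExt_S(A,\Gm)$ with the dual abelian scheme $\Dual{A}$ and gives $\scrHom_S(A,\Gm)=0$, while representability of the extensions themselves is already recorded in the discussion preceding Lemma \ref{extrigaltdesc}. It then remains to compute the kernel and check right exactness. Using the concrete description of Lemma \ref{extrigaltdesc}, a rigidified extension lying over the trivial class is a pair $(\Gm\times_S A,\eta)$ with $\eta\in\Gamma(S,\omega_{\Gm}\oplus\omega_A)$ restricting to $\tau$ on $\Gm$; thus $\eta=\tau\oplus\mu$ with $\mu\in\Gamma(S,\omega_A)$, and since the only automorphisms of the trivial extension come from $\Hom(A,\Gm)=0$, distinct $\mu$ give distinct classes. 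Hence the kernel is canonically $\omega_A$, the identity element being the product rigidification $\mu=0$. For right exactness one only needs that a rigidification exists fppf-locally, which is clear because rigidifications of a fixed extension form a torsor under the vector group $\omega_A$, and such torsors are Zariski-locally trivial. This yields (\ref{universalsch}) as a short exact sequence of fppf abelian sheaves.

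\emph{Representability, smoothness, separatedness.} The projection $\scrExtrig_S(A,\Gm)\to\Dual{A}$ in (\ref{universalsch}) is a torsor under the vector group $\omega_A$, which is affine over $S$. By faithfully flat descent, torsors under an affine flat $S$-group are representable and have affine structure morphism; hence $\scrExtrig_S(A,\Gm)$ is representable and affine over $\Dual{A}$. Composing with $\Dual{A}\to S$ shows it is separated over $S$ (affine over separated) and smooth over $S$ (an $\omega_A$-torsor is smooth over $\Dual{A}$, and $\Dual{A}$ is smooth over $S$).

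\emph{Universality.} I would prove that for every vector group $V$ on $S$ the pushout map $\Hom_{\O_S}(\omega_A,V)\to\Ext^1_S(\Dual{A},V)$ determined by (\ref{universalsch}) is an isomorphism. Identifying $V$ with a locally free $\O_S$-module and using $\O_S$-linearity of these functors in vector-group coefficients, both sides are $(-)\otimes_{\O_S}V$ applied to the case $V=\Ga$, so it suffices to treat $V=\Ga$. There the pushout map is the coboundary $\partial\colon\Lie(A)=\omega_A^{\vee}\to\Ext^1_S(\Dual{A},\Ga)$ in the long exact $\Hom_S(-,\Ga)$-sequence of (\ref{universalsch}); since $\Hom_S(\Dual{A},\Ga)=0$, it is enough to show that $\partial$ is an isomorphism, equivalently that $\Hom_S(\scrExtrig_S(A,\Gm),\Ga)=0=\Ext^1_S(\scrExtrig_S(A,\Gm),\Ga)$. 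The main obstacle is precisely this identification, and I expect it to be the hardest step, especially over a general (possibly non-reduced) base. I would produce the canonical isomorphism by applying $\scrLie(-)$ to the Barsotti--Weil description $A\cong\scrExt^1_S(\Dual{A},\Gm)$: first-order extensions of $\Dual{A}$ by $\Gm$ trivial over $S$ are extensions by $\ker(\Gm_{S[\epsilon]}\to\Gm_S)\cong\Ga$, yielding an intrinsic isomorphism $\Lie(A)\cong\Ext^1_S(\Dual{A},\Ga)$ into which the normalization $\tau$ enters through $d\log$. The remaining work is a naturality check that the coboundary $\partial$ attached to our concrete sequence (\ref{universalsch}) agrees with this intrinsic isomorphism; I would carry this out by computing Lie algebras throughout (\ref{universalsch}), using the splitting interpretation of rigidifications from the proof of Lemma \ref{extrigaltdesc} (the sequence (\ref{Liesplit})) to match the tangent space of $\scrExtrig_S(A,\Gm)$ along the zero section with $\Ext^1_S(\Dual{A},\Ga)$ compatibly with $\partial$. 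Once $\partial$ is known to be an isomorphism, the reduction above gives universality of (\ref{universalsch}) for all vector groups $V$, completing the proof.
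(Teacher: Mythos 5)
Your first two steps are sound and essentially reconstruct the argument that the paper itself gives for the N\'eron-model analogue (Proposition \ref{repExtrig}), which is also the shape of the argument in \cite{MM} (the paper offers no proof of this proposition beyond the citation to \cite[I, \S 2.6, Prop. 2.6.7]{MM}): kernel computation via $\Hom(A,\Gm)=0$, local existence of rigidifications, Barsotti--Weil, and representability of the resulting $\omega_A$-torsor over $\Dual{A}$ by descent for affine morphisms. The genuine gap is in the universality step, and it is a characteristic-$p$ problem, which matters because the proposition is asserted over an arbitrary base. The long exact sequence obtained by applying $\Hom_S(-,\Ga)$ to (\ref{universalsch}) involves homomorphisms of \emph{group schemes}, whereas the universal property concerns $\O_S$-\emph{linear} maps $\omega_A\to V$; in characteristic $p$ these differ drastically, since $\Hom_S(\Ga,\Ga)$ contains Frobenius. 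Consequently your claimed equivalence --- ``$\partial$ is an isomorphism if and only if $\Hom_S(E,\Ga)=0=\Ext^1_S(E,\Ga)$'' for $E:=\scrExtrig_S(A,\Gm)$ --- is false over a field $k$ of characteristic $p$, where the proposition itself is nevertheless true. Indeed, if some nonzero linear $\lambda:\omega_A\to\Ga$ has $\lambda_*[E]=0$ we get a nonzero lift $E\to\Ga$ directly; otherwise set $e:=\lambda_*[E]\in\Ext^1_S(\Dual{A},\Ga)$, a finite-dimensional $k$-vector space (it is $H^1(\Dual{A},\O_{\Dual{A}})$, of dimension $g$), and let $\Phi$ denote pushout along Frobenius; the $g+1$ classes $e,\Phi(e),\dots,\Phi^g(e)$ admit a nontrivial relation $\sum_j a_j\Phi^j(e)=0$, so $\psi:=\sum_j a_jF^j\circ\lambda$ is a nonzero group homomorphism $\omega_A\to\Ga$ with $\psi_*[E]=0$, i.e.\ a nonzero element of $\Hom_S(E,\Ga)$. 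Likewise $\Ext^1_S(\omega_A,\Ga)\neq 0$ in characteristic $p$ (Witt-vector extensions), so the vanishing of $\Ext^1_S(E,\Ga)$ cannot be extracted from the sequence either. In short, the one concrete route you propose toward universality sets out to prove statements that are false except in characteristic zero.

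Your fallback --- identify $\Ext^1_S(\Dual{A},\Ga)$ with $\Lie(A)$ by applying $\scrLie$ to the Barsotti--Weil isomorphism and then match the pushout map attached to (\ref{universalsch}) against this intrinsic isomorphism --- is the correct strategy, and it is in essence what \cite[I, 2.6.7]{MM} does; but that matching \emph{is} the content of universality, and you leave it as ``a naturality check carried out by computing Lie algebras,'' so the hardest part of the proof is precisely the part not done. Two further cautions if you pursue it: the reduction to $V=\Ga$ should be performed on $\scrExt$-sheaves, using $\scrHom_S(\Dual{A},V)=0$ and the local-to-global sequence, since $\O_S$-linearity of $V\mapsto\Ext^1_S(\Dual{A},V)$ is not formal at the level of global $\Ext$ groups; and every occurrence of ``$\Hom$'' out of a vector group must be declared linear or group-theoretic, as that distinction is exactly what invalidated the previous paragraph.
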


\begin{proof}
	See \cite{MM}, especially \Rmnum{1} \S 2.6 and \Rmnum{1} Proposition 2.6.7.
\end{proof}

We now specialize to the case that $S=\Spec R$ is the spectrum of a discrete valuation ring $R$ with field of fractions $K$.
Fix an abelian variety $A_K$ over $K$ and denote by $A$ the N\'eron model of $A_K$ over $S$ and by $A^0$
the relative identity component of $A$.  Let
$\Dual{A}$ be the N\'eron model of the dual abelian variety $\Dual{A}_K$.
In \cite[\Rmnum{1}, 5.2]{MM}, Mazur and Messing prove the following analogue of Proposition \ref{MMunivext}:

\begin{proposition}\label{canonicalext}
	As a functor on smooth $S$-schemes, the fppf abelian sheaf $\scrExtrig_S(A^0,\Gm)$
	is represented by a smooth and separated $S$-group scheme.  Moreover, there
	is a natural short exact sequence of smooth groups over $S$
	\begin{equation}
		\xymatrix{
			0\ar[r] & {\omega_A} \ar[r] & {\scrExtrig_S(A^0,\Gm)} \ar[r] & {\Dual{A}} \ar[r] & 0
		}.\label{canextexseq}
	\end{equation}
\end{proposition}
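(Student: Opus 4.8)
The plan is to realize $\scrExtrig_S(A^0,\Gm)$ as an extension of the dual N\'eron model $\Dual{A}$ by the vector group $\omega_A$ and to read off representability, smoothness, and separatedness from this structure. The first step is to analyze the forgetful morphism $(E,\sigma)\mapsto E$, which remembers only the underlying extension. Working on the site of smooth $S$-schemes, I would show that it fits into a short exact sequence of fppf abelian sheaves
\[ 0 \longrightarrow \omega_A \longrightarrow \scrExtrig_S(A^0,\Gm) \longrightarrow \scrExt_S(A^0,\Gm) \longrightarrow 0. \]
The identification of the kernel with $\omega_A$ is the computation of the rigidifications of the split extension: by Lemma \ref{extrigaltdesc} a rigidification is the same datum as a global invariant differential on the total space restricting to $\tau$ on $\Gm$, and for the trivial extension $\Gm\times A^0$ such differentials are in bijection with $\omega_{A^0}(S)=\omega_A(S)$ (recall $A^0$ is open in $A$, so they share invariant differentials), the Baer sum making this an isomorphism of groups. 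Exactness on the left uses that the split extension of $A^0$ by $\Gm$ admits no nontrivial automorphisms as an extension, since $\Hom_S(A^0,\Gm)=0$. Surjectivity on the right is the statement that every extension admits a rigidification Zariski-locally on the base: the rigidifications of a fixed extension form a torsor under the vector group $\omega_A$ (equivalently, one must split the surjection $\omega_E\to\omega_{\Gm}$ of locally free sheaves), and such torsors are Zariski-locally trivial.

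The heart of the matter is the middle step: identifying $\scrExt_S(A^0,\Gm)$ with $\Dual{A}$ as fppf sheaves on smooth $S$-schemes. Over the generic fibre this is the Barsotti--Weil theorem, which identifies $\scrExt_K(A_K,\Gm)$ with the dual abelian variety $\Dual{A}_K$ via the Poincar\'e biextension. I would then produce a canonical morphism $\Dual{A}\to\scrExt_S(A^0,\Gm)$ by prolonging the generic-fibre biextension along the N\'eron mapping property of $\Dual{A}$, and verify that it is an isomorphism of sheaves on the smooth site. The essential point, and the reason one must take the source to be the identity component $A^0$ while obtaining the \emph{full} model $\Dual{A}$ on the right, is the interplay between the component groups of $A$ and of $\Dual{A}$: this is the unconditional companion to the identification of $\scrExt_S(A,\Gm)$ with $\Dual{A}^0$, the latter holding only when Grothendieck's pairing on component groups is perfect. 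This identification is where the genuine input from the theory of N\'eron models and duality enters, and it is the step I expect to be the main obstacle.

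Granting the two previous steps, $\scrExtrig_S(A^0,\Gm)$ is an extension of the smooth separated $S$-group scheme $\Dual{A}$ by the vector group $\omega_A$; concretely it is an $\omega_A$-torsor over $\Dual{A}$ equipped with a compatible group structure. A torsor under the affine flat group $\omega_A$ over a scheme is representable, and since $\omega_A$ and $\Dual{A}$ are each smooth and separated over $S$, so is the total space. This yields the representability of $\scrExtrig_S(A^0,\Gm)$ by a smooth separated $S$-group scheme together with the exact sequence (\ref{canextexseq}), the two maps being the inclusion of $\omega_A$ and the forgetful projection to $\Dual{A}$. As a consistency check on naturality, restricting (\ref{canextexseq}) to the generic fibre recovers the universal extension of $\Dual{A}_K$ furnished by Proposition \ref{MMunivext} applied to the abelian variety $A_K$, so that the generic fibre of (\ref{canextexseq}) is precisely (\ref{universalsch}) for $A_K$.
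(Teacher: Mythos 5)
Your first and third steps are sound and follow exactly the route the paper itself uses for the companion statement about $\scrExtrig_S(A,\Gm)$ (Proposition \ref{repExtrig}), which is modelled on Mazur--Messing: compute the kernel of the forgetful map via rigidifications of the trivial extension and the vanishing of $\Hom_T(A^0_T,\Gm)$ for smooth $T$, get surjectivity from Zariski-local existence of rigidifications (Lemma \ref{extrigaltdesc}), and then obtain representability, smoothness and separatedness by descent from the $\omega_A$-torsor structure over the representable quotient.

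The problem is your second step, and it is not a peripheral one: the identification of $\scrExt_S(A^0,\Gm)$ with $\Dual{A}$ on smooth test objects is the entire content of the proposition (the paper itself takes the whole statement from \cite[\Rmnum{1}, 5.2]{MM}), and your proposal both defers it (``I would \dots verify that it is an isomorphism'') and proposes a mechanism that cannot work as stated. The N\'eron mapping property of $\Dual{A}$ produces morphisms \emph{into} $\Dual{A}$: it extends $K$-morphisms $T_K\rightarrow \Dual{A}_K$ from smooth $S$-schemes $T$ to $S$-morphisms $T\rightarrow\Dual{A}$. It cannot be used to ``prolong the generic-fibre biextension,'' i.e.\ to manufacture a map \emph{out of} $\Dual{A}$ into the sheaf $\scrExt_S(A^0,\Gm)$. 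By Yoneda, such a map is a class in $\Ext_{\Dual{A}}(A^0_{\Dual{A}},\Gm)$ compatible with the group structure on $\Dual{A}$, i.e.\ precisely a prolongation of the Poincar\'e biextension to a biextension of $(A^0,\Dual{A})$ by $\Gm$; nothing about the N\'eron property of $\Dual{A}$ produces this, and the target sheaf is not yet known to be a smooth separated $S$-scheme --- that is what we are trying to prove.

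The genuine input needed at this point is one of the following. Either $($\rmnum{1}$)$ Grothendieck's theorem (SGA 7, Exp.\ \Rmnum{8}) that restriction induces a bijection $\mathrm{Biext}^1(A^0,\Dual{A};\Gm)\simeq\mathrm{Biext}^1(A_K,\Dual{A}_K;\Gm)$ --- it is the connectedness of the fibres of $A^0$ that kills the component-group obstruction here, which is why this statement is unconditional while the identification $\scrExt_S(A,\Gm)\simeq\Dual{A}^0$ of Proposition \ref{repExtrig} requires non-degeneracy of Grothendieck's pairing --- followed by a proof that the resulting map of sheaves is an isomorphism on smooth test objects; or $($\rmnum{2}$)$ Mazur--Messing's actual argument, recalled in the Remark following Proposition \ref{repExtrig}: one shows that the functor $T\rightsquigarrow \Ext_T(A^0_T,\Gm)$ on smooth $S$-schemes itself satisfies the N\'eron mapping property (restriction to generic fibres is bijective), whence by Barsotti--Weil and uniqueness of N\'eron models it is represented by $\Dual{A}$. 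In other words, it is the N\'eron property of the Ext sheaf, not of $\Dual{A}$, that drives the proof. Your instinct that this step is ``the main obstacle'' and your observation of the $A^0$-versus-$\Dual{A}$ asymmetry are both correct, but identifying the obstacle is not overcoming it: as written, the proposal establishes the proposition only modulo its essential content.
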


\begin{definition}[Mazur-Messing]\label{canextdefn}
	The {\em canonical extension} of $\Dual{A}$ is the smooth and separated $S$-group scheme
	$$\E(\Dual{A}):=\scrExtrig_S(A^0,\Gm).$$
\end{definition}

\begin{remarks}\label{unicaneq}
	When $A$ is an abelian scheme, the canonical extension $\E(\Dual{A})$ coincides with the universal extension of $\Dual{A}$ by a vector group
	by Proposition \ref{MMunivext}.  When $A$ is not an abelian scheme, an example of Breen and Raynaud \cite[I, 5.6]{MM}
	shows that $A$ need not have a universal extension.

	Note, however, that since the functor $\scrExtrig_S(A^0,\Gm)$ commutes with fppf base change, the smooth group
	scheme $\scrExtrig_S(A^0,\Gm)$ representing it on the category of smooth group schemes over $S$ is of formation
	compatible with base change to a smooth $S$-scheme.  In particular, the $K$-fiber of the canonical extension 
	exact sequence (\ref{canextexseq}) is the universal extension of $\Dual{A}_K$ by a vector group, thanks to Proposition \ref{MMunivext}.
\end{remarks}

In this paper, we will need to work with $\scrExtrig_S(A,\Gm)$ instead of $\scrExtrig_S(A^0,\Gm)$, as the former
has better functorial properties due to the N\'eron mapping property of $A$ (which is not enjoyed by $A^0$).  
Following the method of Mazur-Messing \cite[I, 5.2]{MM}, we wish to show that $\scrExtrig_S(A,\Gm)$ is representable, at least as a functor on smooth test objects.
This is somewhat more subtle than the corresponding problem for $\scrExtrig_S(A^0,\Gm)$; in particular,
denoting by $\Phi_A$ and $\Phi_{\Dual{A}}$ the component groups of $A$ and $\Dual{A}$, 
we will need to know that Grothendieck's pairing for $A_K$ (see \cite[Exp. \Rmnum{7}--\Rmnum{9}]{SGA7.1} 
or \cite[\S 4]{BoschComponent})
\begin{equation}
	\xymatrix{
		{\Phi_A\times\Phi_{\Dual{A}}}\ar[r] & {\Q/\Z}
		}\label{Grpairing}
\end{equation}
is right non-degenerate.

\begin{proposition}\label{repExtrig}
	  Suppose that Grothendieck's pairing 
	  on component groups is right non-degenerate.
	  Then the fppf abelian sheaf $\scrExtrig_S(A,\Gm)$ on the category of smooth $S$-schemes is represented by
	a smooth and separated $S$-group scheme. 
	Moreover, there is a natural short exact sequence of smooth group schemes over $S$
	\begin{equation}
		\xymatrix{
			0\ar[r] & {\omega_A}\ar[r] & {\scrExtrig_S(A,\Gm)} \ar[r] & {\Dual{A}^0} \ar[r] & 0
		}.\label{extrigexact}
	\end{equation}
\end{proposition}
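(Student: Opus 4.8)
The plan is to follow the strategy Mazur--Messing used for $\scrExtrig_S(A^0,\Gm)$ in Proposition~\ref{canonicalext}, now working with the full N\'eron model $A$ in place of $A^0$; the one genuinely new ingredient is the passage between the component groups of $A$ and $\Dual{A}$, which is governed by Grothendieck's pairing. As in Lemma~\ref{extrigaltdesc}, forgetting the rigidification defines a morphism of fppf sheaves $\scrExtrig_S(A,\Gm)\to\scrExt^1_S(A,\Gm)$ on smooth $S$-schemes. Its kernel is $\omega_A$: a rigidification of the trivial extension $\Gm\times A$ is, via the dictionary of Lemma~\ref{extrigaltdesc}, a global invariant differential restricting to $\tau$ on $\Gm$, and these form a torsor under $\omega_{A}=\omega_{A^0}$ based at $\tau$. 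The map is moreover an fppf epimorphism, since for any extension $E$ the surjection of invariant differentials $\omega_E\to\omega_{\Gm}$ admits a section fppf-locally, so a rigidification exists fppf-locally. Thus everything reduces to identifying $\scrExt^1_S(A,\Gm)$, as a sheaf on smooth $S$-schemes, with $\Dual{A}^0$.

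For this identification I would exploit the component-group exact sequence $0\to A^0\to A\to \Phi_A\to 0$ and apply $\scrHom_S(-,\Gm)$. Using $\scrHom_S(A^0,\Gm)=0$ (an abelian variety has no nontrivial characters, so neither does $A^0$, by schematic density of the generic fibre), the long exact sequence reads
\[
0\to \scrExt^1_S(\Phi_A,\Gm)\to \scrExt^1_S(A,\Gm)\xrightarrow{\alpha} \scrExt^1_S(A^0,\Gm)\xrightarrow{\partial} \scrExt^2_S(\Phi_A,\Gm).
\]
Here $\scrExt^1_S(A^0,\Gm)\cong\Dual{A}$ on smooth $S$-schemes, which follows from Proposition~\ref{canonicalext} (the quotient in \eqref{canextexseq} is exactly this $\scrExt^1$). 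Since $\Phi_A$ is a finite \'etale group scheme supported on the closed fibre, the standard computation of $\scrExt^\bullet_S(\Phi_A,\Gm)$ over $S$ (reflecting $Ri^!\Gm\simeq\Z[-1]$ for the codimension-one closed immersion $i$, cf.\ \cite{SGA7.1}) gives $\scrExt^1_S(\Phi_A,\Gm)=0$ and $\scrExt^2_S(\Phi_A,\Gm)\cong i_*\Hom(\Phi_A,\Q/\Z)$. The vanishing makes $\alpha$ a monomorphism, while the target of $\partial$, being \'etale and discrete, forces $\partial$ to kill the identity component and hence to factor through $\Dual{A}\twoheadrightarrow\Phi_{\Dual{A}}$; the induced map $\Phi_{\Dual{A}}\to\Hom(\Phi_A,\Q/\Z)$ is precisely the one attached to Grothendieck's pairing \eqref{Grpairing}.

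Granting that compatibility, right non-degeneracy of \eqref{Grpairing} says exactly that $\Phi_{\Dual{A}}\to\Hom(\Phi_A,\Q/\Z)$ is injective, whence $\ker\partial=\Dual{A}^0$ and $\alpha$ identifies $\scrExt^1_S(A,\Gm)$ with $\Dual{A}^0$ on smooth test objects (this is the representability assertion of \cite[Proposition~5.1]{BoschComponent}). Combined with the first paragraph this yields \eqref{extrigexact}. For representability I would interpret the result through restriction: pullback along $A^0\hookrightarrow A$ gives $\rho\colon\scrExtrig_S(A,\Gm)\to\scrExtrig_S(A^0,\Gm)=\E(\Dual{A})$, and the computation above shows $\rho$ is a monomorphism identifying the source with the preimage of the open subgroup $\Dual{A}^0\subseteq\Dual{A}$ under $\E(\Dual{A})\to\Dual{A}$ (one uses $\Inf^1_S(A^0)=\Inf^1_S(A)$ to match rigidifications, and $\scrExt^1_S(\Phi_A,\Gm)=0$ for injectivity). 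That preimage is an open subgroup scheme of the smooth separated $\E(\Dual{A})$, so $\scrExtrig_S(A,\Gm)$ is represented by a smooth and separated $S$-group scheme; alternatively, \eqref{extrigexact} exhibits it as an extension of a smooth separated group by a vector group, which is automatically smooth and separated.

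The main obstacle is the content of the second and third paragraphs: establishing the computation of $\scrExt^\bullet_S(\Phi_A,\Gm)$ over the base $S$ and, above all, checking that the boundary map $\partial$ is identified with Grothendieck's pairing, so that the functorial hypothesis of right non-degeneracy translates into the geometric statement $\ker\partial=\Dual{A}^0$. This is the delicate point about component groups and duality over a discrete valuation ring, and I would lean on Bosch's \cite{BoschComponent}, which abstracts precisely this compatibility, rather than reprove it. The remaining steps---the rigidification torsor computation and the passage to an open subgroup scheme of $\E(\Dual{A})$---are formal.
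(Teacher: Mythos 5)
Your proposal is correct and, at its core, follows the same route as the paper: both analyze the forgetful map $\scrExtrig_S(A,\Gm)\to\scrExt_S(A,\Gm)$ (kernel $\omega_A$, surjective as a map of fppf sheaves because rigidifications exist over affines), and both obtain the key identification $\scrExt_S(A,\Gm)\simeq\Dual{A}^0$ on smooth test objects from \cite[Proposition 5.1]{BoschComponent}; your second and third paragraphs sketch the inner workings of Bosch's result (the component-group sequence and the identification of the boundary map with Grothendieck's pairing (\ref{Grpairing})), but in the end you defer to that citation exactly as the paper does. Two points of divergence are worth recording. First, a small gap in your kernel computation: the kernel of the forgetful map is the set of rigidifications of the trivial extension \emph{modulo automorphisms of that extension}; such automorphisms are given by homomorphisms $\varphi\in\Hom_T(A_T,\Gm)$ acting by $\eta\mapsto\eta+d\varphi$, so to conclude that the kernel is all of $\omega_A$ rather than a quotient of it one must check $\Hom_T(A_T,\Gm)=0$, which the paper deduces from flatness of $A$, properness of the generic fiber, and smoothness of $T$ over $S$; your ``torsor'' statement describes rigidifications but ignores this passage to equivalence classes. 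Second, representability: the paper adapts the fppf-descent argument of \cite[\Rmnum{3}, Proposition 17.4]{oort} to represent an extension of the smooth separated $\Dual{A}^0$ by the affine group $\omega_A$ --- so your parenthetical claim that such an extension is ``automatically smooth and separated'' is not automatic, it is precisely this descent argument. Your primary route, however, is genuinely different and arguably more economical: identify $\scrExtrig_S(A,\Gm)$, via restriction along $A^0\hookrightarrow A$ and the five lemma for fppf sheaves, with the preimage of the open subgroup $\Dual{A}^0\subseteq\Dual{A}$ under $\E(\Dual{A})\to\Dual{A}$, which is an open subgroup scheme of the scheme $\E(\Dual{A})$ furnished by Proposition \ref{canonicalext}. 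This pullback description is exactly what the paper establishes afterwards in Lemma \ref{idencomp}, but only once representability is already known; using it to \emph{obtain} representability is legitimate, provided you verify that Bosch's map $\Dual{A}^0\to\scrExt_S(A,\Gm)$ is compatible under restriction with the Mazur--Messing isomorphism $\Dual{A}\to\scrExt_S(A^0,\Gm)$, which holds by the construction of these canonical maps.
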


\begin{proof}
	We follow the proof of \cite[\Rmnum{1}, 5.2]{MM}.  Let $T$ be any smooth $S$-scheme and consider the natural map of 	abelian groups
	  \begin{equation}  
	  	\xymatrix{
			{\Extrig_T(A,\Gm)} \ar[r] & {\Ext_T(A,\Gm)}
		}.\label{forgetrig}
	  \end{equation}
	By Lemma \ref{extrigaltdesc}, we see that when $T$ is affine any extension $E$ of $A_T$ by $\Gm$
	admits a rigidification 
	  so (\ref{forgetrig}) is surjective.
	By definition, the kernel of (\ref{forgetrig}) consists of rigidifications on the trivial
	extension of $A_T$ by $\Gm$, up to isomorphism.  To give a rigidification 
	$\Inf^1_T(A_T)\rightarrow A_T\times_T \Gm$ of the trivial extension is obviously
	equivalent to giving a map of $T$-pointed $T$-schemes $\Inf^1_T(A_T)\rightarrow \Gm$,
	which in turn is equivalent to giving a global section of $\omega_{A_T}$ (cf. \cite[\Rmnum{1}, 1.2]{MM}
	or the proof of Lemma \ref{extrigaltdesc}).  If two sections $\eta_1$ and $\eta_2$ of $\omega_{A_T}$ give 
	isomorphic rigidified extensions of the trivial extension, then there is an automorphism
	of the trivial extension, necessarily induced by a group map $\varphi: A_T\rightarrow \Gm$,
	with the property that $\eta_1$ and $\eta_2$ differ by $d\varphi \in \Gamma(T,\omega_{A_T})$
	(with $d:\Hom_T(A_T,\Gm)\rightarrow \Hom(\Inf^1_T(A_T),\Gm)$ the natural map induced by the
	canonical closed immersion $\Inf^1_T(A_T)\rightarrow A_T$).  Since $A$ is flat with
	proper generic fiber and $T$ is $S$-smooth, we have $\Hom_T(A_T,\Gm)=0$ so by passing to
	the associated fppf abelian sheaves, we thus obtain the short exact sequence of fppf sheaves
	 \begin{equation*}  
	  	\xymatrix{
			0\ar[r] & {\omega_{A}}\ar[r] & {\scrExtrig_S(A,\Gm)} \ar[r] & {\scrExt_S(A,\Gm)}\ar[r] & 0
		}.
	  \end{equation*}
	 
	 Now by \cite[Proposition 5.1]{BoschComponent} (or \cite[\Rmnum{3}, Proposition C.14]{ADT}), 
	 the canonical duality of abelian varieties extends to a natural map $\Dual{A}^0\rightarrow \scrExt_S(A,\Gm)$ 
	which is an isomorphism of fppf abelian sheaves on the category of smooth $S$-schemes if and only if Grothendieck's 	pairing on component groups (\ref{Grpairing}) is right non-degenerate.  Thus,
	 our hypotheses ensure that $\scrExt_S(A,\Gm)$ is represented on the category of smooth $S$-schemes
	 by the smooth and separated $S$-group scheme $\Dual{A}^0$.  
	 Since $\omega_A$ is a vector group, it is clearly smooth and affine over $S$.
	   Thus, the proof of \cite[\Rmnum{3}, Proposition 17.4]{oort}, which is easily adapted from the situation 
	   considered there (fpqc topology on all $S$-schemes)
	 to our situation (fppf topology on smooth $S$-schemes) since $\omega_A$ and $\Dual{A}^0$ are smooth, shows
	 via fppf descent that $\scrExtrig_S(A,\Gm)$ is represented (on smooth $S$-schemes) by a smooth
	 and separated $S$-group scheme, and that there is a short exact sequence of smooth $S$-schemes (\ref{extrigexact}).
\end{proof}

\begin{remark}
 	We note that Mazur-Messing \cite[\Rmnum{1}, 5.2]{MM} prove that the canonical map
	  \begin{equation*}
		\xymatrix{
			{\Dual{A}}\ar[r] & {\scrExt_S(A^0,\Gm)}
			}
	\end{equation*}
	is an isomorphism of fppf abelian sheaves on smooth test objects for any N\'eron model $A$ 
	over any connected Dedekind scheme $S$
	by showing that  $\scrExt_S(A^0,\Gm)$ satisfies the N\'eron mapping property.  In our situation, 
	this method fails to generalize as $\Dual{A}^0$
	does not satisfy any good mapping property on smooth $S$-schemes which do not have connected closed fiber.
\end{remark}

In our applications, we will wish to apply Proposition \ref{repExtrig} when $A_K$ is the
Jacobian of a smooth and proper curve over $K$.  In this situation, it follows easily from the 
autoduality of $J_K$ and the functoriality of the morphism $\Dual{J}^0\rightarrow \scrExt_S(J,\Gm)$
that Grothendieck's pairing is right non-degenerate if and only if it is left non-degenerate if and only if it is perfect.  
In order to apply Proposition \ref{repExtrig}, we will need the following criterion for the
perfectness of Grothendieck's pairing:

\begin{proposition}\label{GrPerfJ}
	Let $X_K$ be a smooth and proper curve over $K$ with Jacobian $J_K$ over $K$.  
	Fix a proper flat and normal model $X$ of $X_K$ over $R$, and denote by $X_1,\ldots,X_n$
	the $($reduced$)$ irreducible components of the closed fiber $X_k$.  Suppose that
	the the greatest common divisor of the geometric multiplicities of the $X_i$ in $X_k$
	is 1, and assume one of the following hypotheses holds:
	\begin{enumerate}
		\item The residue field $k$ of $R$ is perfect.\label{perfectcase}
		\item $X$ is regular, each $X_i$ is geometrically reduced and $X$ admits an \'etale
		quasi-section.\label{imperfectcase}
	\end{enumerate}
	Then Grothendieck's pairing $(\ref{Grpairing})$ for $J_K$ is perfect.
\end{proposition}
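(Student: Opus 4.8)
The plan is to reduce to the combinatorial computation of Grothendieck's pairing on a regular model due to Bosch and Lorenzini, and then to check that their hypotheses are met under each of (1) and (2). By the autoduality of $J_K$ recorded just above the statement, it suffices to prove that the pairing $(\ref{Grpairing})$ is right non-degenerate, since for Jacobians right non-degeneracy, left non-degeneracy, and perfectness all coincide.

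Since the pairing $(\ref{Grpairing})$ depends only on $J_K$ and not on the chosen model of $X_K$, I am free to replace $X$ by any convenient proper flat regular model, on which the intersection theory of the special fiber is available and which underlies Raynaud's description of the component group $\Phi_J$. In case (2) the given model $X$ is already regular. In case (1), with $k$ perfect, I would pass to a resolution of singularities $X'\rightarrow X$ (which exists as $X$ is an excellent two-dimensional scheme); because such a resolution is an isomorphism at the codimension-one points of $X$, hence at the generic points of the components of $X_k$, the multiplicities of the original components persist and the set of multiplicities occurring in the special fiber is merely enlarged, so that the hypothesis that the greatest common divisor of the geometric multiplicities equals $1$ is preserved. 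Moreover, over a perfect residue field every reduced component is automatically geometrically reduced. In either situation the perfectness now follows from the description of Grothendieck's pairing via the intersection matrix of the special fiber together with this greatest-common-divisor hypothesis, which is precisely the content of \cite[Corollary 4.7]{BoschLorenzini}.

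The delicate point, and the reason the hypotheses of (2) take the form they do, is the passage to imperfect residue fields, where geometric multiplicities genuinely differ from ordinary multiplicities and where Grothendieck's pairing need not be perfect in general (Remark \ref{GrPerf}). Here the assumptions that each $X_i$ be geometrically reduced and that $X$ admit an \'etale quasi-section are exactly what is needed: after an unramified base change trivializing the quasi-section one acquires a genuine section and so lands in the range of the Bosch--Lorenzini computation, while geometric reducedness of the $X_i$ guarantees that intersection numbers on the geometric special fiber faithfully record $\Phi_J$; the perfectness established there then descends back to $R$. This descent is the argument pointed out to me by P\'epin, and it is the step I expect to require the most care. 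The remaining verifications, that the hypotheses of \cite[Corollary 4.7]{BoschLorenzini} hold verbatim under (1) and (2), are then routine.
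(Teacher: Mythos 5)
Your case (2) is essentially the paper's argument: replace $R$ by a strict henselization (legitimate since the hypotheses are stable under, and perfectness of Grothendieck's pairing may be checked after, \'etale base change), whereupon the \'etale quasi-section yields a $K$-rational point and \cite[Corollary 4.7]{BoschLorenzini} applies verbatim. But your case (1) has a genuine gap: you assert that the content of \cite[Corollary 4.7]{BoschLorenzini} is that the gcd-$1$ condition on geometric multiplicities, together with geometric reducedness of the components of a regular model, implies perfectness. That is not what the corollary says: its hypotheses include the existence of a $K$-\emph{rational point} on $X_K$ --- as you yourself implicitly acknowledge in case (2), where you pass to an extension trivializing the quasi-section precisely in order to have a section before invoking Bosch--Lorenzini. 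The gcd-$1$ condition does not produce a rational point, even over a strictly henselian $R$; curves satisfying it may have no $K$-point at all (compare Remark \ref{GrPerf}, where only an \emph{index} condition, not gcd of multiplicities, is available, and even that requires further work of P\'epin beyond Bosch--Lorenzini). So after your resolution of singularities in case (1) you are not ``in the range of the Bosch--Lorenzini computation,'' and the proof stops there.

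The paper's bridge across this gap is exactly the step your proposal lacks. Over the strict henselization, since the gcd of the multiplicities is $1$, some component of the (normal crossings) special fiber of a resolution $\widetilde{X}$ of $X$ has multiplicity $e$ prime to $\Char(k)$; by the proof of \cite[Theorem 10.4.6]{LiuBook} there is then a tamely ramified Galois extension $K'/K$ of ramification index $e$, with trivial residue field extension, such that the normalization of $\widetilde{X}\times_S \Spec(R')$ has a generically smooth component over $k$, whence an $R'$-point (as $R'$ is strictly henselian) and so a $K'$-rational point of $X_{K'}$. Since $k$ is perfect, any proper flat regular model of $X_{K'}$ has geometrically reduced components, so \cite[Corollary 4.7]{BoschLorenzini} gives perfectness of the pairing for $J_{K'}$; finally, perfectness descends along the tamely ramified extension $K'/K$ by \cite{BertapelleBosch}. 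Without both the tame-point construction and this Bertapelle--Bosch descent step, case (1) remains unproven.
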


\begin{proof}
	As our hypotheses are preserved by and our conclusion may be checked after \'etale base change, 
	we may replace $R$ with a strict henselization of $R$ and may thus assume that $R$ is strictly henselian.  
	In case (\ref{imperfectcase}), our hypotheses ensure that $X_K$ has a $K$-rational point and admits
	a proper flat and regular model $X$ over $R$ all of whose (reduced) irreducible components
	are geometrically reduced.  These are exactly the hypotheses of \cite[Corollary 4.7]{BoschLorenzini},
	which then ensures that Grothendieck's pairing for $J_K$ is perfect.  
	
	In case (\ref{perfectcase}), we first
	claim that our hypothesis on the gcd of the geometric multiplicities 
	of the $X_i$ in $X_k$ imply the existence of a tamely ramified Galois extension $K'$ of $K$
	(necessarily with trivial residue field extension) such that $X_{K'}$ has a $K'$-rational point.
		Indeed, by resolution of singularities for excellent surfaces \cite[\S2]{DM}, \cite{Lipman2} 
		and descent arguments from
		the completion of $R$ (see \cite[Theorem 2.2.2]{CES}) there exists
		a proper birational morphism of proper and flat $S$-models $\widetilde{X}\rightarrow X$ of 
		$X_K$ with $\widetilde{X}$ regular. 
		Due to \cite[Corollary 9.2.30]{LiuBook}, we may assume that the closed fiber $\widetilde{X}_k$ is a 
		normal crossings divisor on $\widetilde{X}$.  	
		Observe that the proper and birational morphism $\widetilde{X}\rightarrow X$ is an isomorphism 
		over any point $\xi\in X$ of codimension 1; this may be checked after the base change 
		$\Spec(\O_{X,\xi})\rightarrow X$, where it follows from the valuative criterion for properness
		applied to the discrete valuation ring $\O_{X,\xi}$ (recall that $X$ is normal).		
		In particular, $\widetilde{X}\rightarrow X$ is an isomorphism over the generic points of
		$X_k$ and we deduce that our hypothesis on the gcd
		of the geometric multiplicities of the irreducible components of $X_k$ is inherited
		by $\widetilde{X}$.  Thus, there exists an irreducible component $\Gamma_0$ of $\widetilde{X}_k$ 
		whose multiplicity $e$ in $\widetilde{X}_k$ is not divisible by $\Char(k)$.  
		The proof of \cite[Theorem 10.4.6]{LiuBook} (see also
		\cite[Corollary 10.4.7]{LiuBook}) then shows that there is a Galois extension $K'$ of $K$ with ramification
		index $e$ having the following property: 
		letting $R'$ denote the integral closure of $R$ in $K'$, 
		(which is again a discrete valuation ring, as $R$ is henselian)
		and	writing $X'$ for the normalization of
	    $\widetilde{X}\times_S \Spec(R')$, the closed fiber $X'_k$ has an irreducible component $\Gamma_0'$ over 
	    $\Gamma_0$ whose geometric multiplicity in $X'_k$ is 1; {\em i.e.} $\Gamma_0'$
	    is generically smooth.  
	    As $R'$ is strictly henselian, we conclude that there exists
	    an $R'$-point of $X'$ and hence a $K'$-point of $X'_{K'}=X_{K'}$, as claimed.	 
	
	Now since $k$ is perfect, $X_{K'}$
	admits a proper flat and regular model over $R'$ with the property that every (reduced)
	irreducible component of the closed fiber is geometrically reduced (any proper flat and regular model will do).  
	We may therefore apply
	\cite[Corollary 4.7]{BoschLorenzini} to $X_{K'}$ to deduce that Grothendieck's pairing
	for $J_{K'}$ is perfect.  As $K'/K$ is tamely ramified, it now follows from \cite{BertapelleBosch}
	that Grothendieck's pairing for $J_K$ is perfect, as desired.
\end{proof}

\begin{remark}\label{GrPerf}
	Assuming $k$ to be perfect, it follows from work of P\'epin \cite{Pepin} 
	(using the results of Bosch and Lorenzini \cite{BoschLorenzini}) 
	that Grothendieck's pairing for $J_K$ is perfect whenever the index of $X_K$ is 
	not divisible by the characteristic of $k$. 

	Already in the case of Jacobians, Grothendieck's pairing may fail to be perfect.
	Indeed, working over $R$ with imperfect residue fields, Bosch and Lorenzini
	give an explicit example of a Jacobian $J_K$ for which Grothendieck's pairing is not
	perfect \cite[Example 6.2]{BoschLorenzini}.  The first examples of abelian varieties
	for which Grothendieck's pairing is not perfect were given by \cite{BertapelleBosch}.
	
	For an arbitrary abelian variety $A_K$ over $K$,
	Grothendieck's pairing on component groups (\ref{Grpairing}) 
	is in addition known to be perfect under any of the following hypotheses:
	 \begin{enumerate}
	  	\item $R$ is of mixed characteristic $(0,p)$ and $k$ is perfect.\label{mixed}
		\item $k$ is finite.\label{kfinite}
		\item $k$ is perfect and $A_K$ has potentially multiplicative reduction.\label{potmult}
		\item There exists a tamely ramified Galois extension $K'$ of $K$ having trivial
		residue field extension such that Grothendieck's pairing for the base change $A_{K'}$
		is perfect.\label{tame}
\end{enumerate}
	For the proofs of these facts, see \cite{Begueri} in the case of
	(\ref{mixed}), \cite{McCallum} in case (\ref{kfinite}), \cite{BoschComponent} in case (\ref{potmult}),
	and \cite{BertapelleBosch} in the case of (\ref{tame}).
	See also \cite[\Rmnum{3},Theorem 2.5]{ADT}
	when $R$ has mixed characteristic and finite residue field. 
\end{remark}

We end this section by relating the group $\scrExtrig_S(A,\Gm)$ to the identity component of the canonical
extension $\E(\Dual{A}):=\scrExtrig_S(A^0,\Gm)$ of $\Dual{A}$:

\begin{lemma}\label{idencomp}
	Let $A_K$ be an abelian variety over $K$ and $A$ its N\'eron model over $R$.  Suppose
	that Grothendieck's pairing $(\ref{Grpairing})$ for $A_K$ is right non-degenerate, so 
	$\scrExtrig_S(A,\Gm)$ is a smooth $S$-group. 
	The canonical map of short exact sequences of $S$-groups
	\begin{equation}
			\xymatrix{
			0\ar[r] & {\omega_A} \ar[r]\ar[d]  & {\scrExtrig_S(A,\Gm)} \ar[r]\ar[d] & {\Dual{A}^0} \ar[r]\ar[d] & 0\\
						0\ar[r] & {\omega_A} \ar[r] & {\scrExtrig_S(A^0,\Gm)} \ar[r] & {\Dual{A}} \ar[r] & 0
					}\label{idencompmap}
	\end{equation}
	furnished from the functoriality of $\scrExtrig_S(\cdot,\Gm)$  
	by the inclusion $A^0\hookrightarrow A$ identifies $\scrExtrig_S(A,\Gm)$ with the identity component 
	of $\scrExtrig_S(A^0,\Gm)$.  
\end{lemma}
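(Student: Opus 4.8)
The plan is to first pin down the two outer vertical maps of \eqref{idencompmap}, and then to realize $\scrExtrig_S(A,\Gm)$ as a concrete open subgroup scheme of $\E(\Dual{A})$. The left vertical map is the canonical identification $\omega_A=\omega_{A^0}$ obtained by restricting rigidifications (equivalently, invariant differentials) along the open immersion $A^0\hookrightarrow A$; since $A^0$ contains the identity section this is an isomorphism. On the quotients, the functoriality of $\scrExt_S(\cdot,\Gm)$ together with the identifications $\Dual{A}^0\simeq \scrExt_S(A,\Gm)$ (valid on smooth test objects by our non-degeneracy hypothesis, as in the proof of Proposition \ref{repExtrig}) and $\Dual{A}\simeq \scrExt_S(A^0,\Gm)$ (Proposition \ref{canonicalext}) shows that the right vertical map is the canonical open immersion $\Dual{A}^0\hookrightarrow \Dual{A}$.

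Next I would set $G:=\E(\Dual{A})=\scrExtrig_S(A^0,\Gm)$ with its structural projection $\pi\colon G\to \Dual{A}$ from \eqref{canextexseq}, and consider the preimage $\pi^{-1}(\Dual{A}^0)$. Since $\Dual{A}^0$ is an open subgroup scheme of $\Dual{A}$ and $\pi$ is a homomorphism of smooth $S$-groups, $\pi^{-1}(\Dual{A}^0)$ is an open subgroup scheme of $G$ that sits in a short exact sequence $0\to\omega_A\to \pi^{-1}(\Dual{A}^0)\to \Dual{A}^0\to 0$ of fppf sheaves. Writing $\varphi$ for the middle vertical map of \eqref{idencompmap}, the composite $\pi\circ\varphi$ factors through $\Dual{A}^0$ by commutativity of the right-hand square, so $\varphi$ factors through the open subscheme $\pi^{-1}(\Dual{A}^0)$. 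This yields a morphism of short exact sequences of fppf abelian sheaves from the top row of \eqref{idencompmap} to $0\to\omega_A\to \pi^{-1}(\Dual{A}^0)\to \Dual{A}^0\to 0$ whose outer vertical arrows are now both isomorphisms; the five lemma in the abelian category of fppf abelian sheaves then forces $\varphi$ to induce an isomorphism $\scrExtrig_S(A,\Gm)\xrightarrow{\sim}\pi^{-1}(\Dual{A}^0)$, which (by Yoneda, both sides being representable) is an isomorphism of $S$-group schemes. Thus $\scrExtrig_S(A,\Gm)$ is identified with the open subgroup scheme $\pi^{-1}(\Dual{A}^0)$ of $\E(\Dual{A})$.

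It remains to identify $\pi^{-1}(\Dual{A}^0)$ with the identity component $G^0$, and this fiberwise comparison is the heart of the matter. Over $K$, the sequence \eqref{canextexseq} specializes to the universal extension of $\Dual{A}_K$ by $\omega_{A_K}$, so $G_K$ is an extension of the connected group $\Dual{A}_K$ by a vector group and is therefore connected; as $\Dual{A}^0_K=\Dual{A}_K$, both $\pi^{-1}(\Dual{A}^0)$ and $G$ share this connected generic fiber. Over $k$, the group $\pi^{-1}(\Dual{A}^0)_k=\pi_k^{-1}(\Dual{A}^0_k)$ is an extension of the connected group $\Dual{A}^0_k$ by the vector group $\omega_A\otimes_R k$, hence connected, giving $\pi^{-1}(\Dual{A}^0)_k\subseteq G_k^0$; conversely $G_k^0$ is connected and $\pi_k(G_k^0)$ is a connected subgroup through the identity, hence lands in $\Dual{A}^0_k$, giving the reverse inclusion. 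Therefore $\pi^{-1}(\Dual{A}^0)_k=G_k^0$. Since $\Dual{A}\setminus\Dual{A}^0$ lies in the special fiber, the complement $G\setminus\pi^{-1}(\Dual{A}^0)=\pi^{-1}(\Dual{A}\setminus\Dual{A}^0)$ is a subset of $G_k$, as is $G\setminus G^0=G_k\setminus G_k^0$; the fiberwise computation shows these two closed subsets of $G_k$ coincide, whence $\pi^{-1}(\Dual{A}^0)=G^0$. Combined with the previous paragraph, this gives the asserted identification of $\scrExtrig_S(A,\Gm)$ with $\E(\Dual{A})^0$.

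The main obstacle I anticipate is this final step: verifying cleanly that an open subgroup scheme of a smooth $S$-group over the one-dimensional base $S$ is pinned down by its special fiber once the generic fibers already agree, and organizing the fiberwise connectedness inputs (extensions of connected groups by vector groups are connected, and the identity component maps into the identity component) so that $\pi^{-1}(\Dual{A}^0)$ is recognized as exactly $\E(\Dual{A})^0$ rather than merely some open subgroup containing it.
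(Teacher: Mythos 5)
Your proposal is correct and follows essentially the same route as the paper: both arguments identify $\scrExtrig_S(A,\Gm)$ with the preimage of $\Dual{A}^0$ under $\E(\Dual{A})\rightarrow\Dual{A}$ (the paper phrases this as the fiber product $\scrExtrig_S(A^0,\Gm)\times_{\Dual{A}}\Dual{A}^0$, deduced from functoriality rather than by your five-lemma argument, which is an equivalent and equally valid derivation), and both then rest on the same two connectedness inputs, namely that an extension of a connected finite-type group over a field by a connected (vector) group is connected---a fact the paper proves carefully and you quote---and that a connected group maps into the identity component of its target. The only real difference is in the finishing move: you compare the open subschemes $\pi^{-1}(\Dual{A}^0)$ and $\E(\Dual{A})^0$ fiberwise over the two points of $S$, while the paper instead verifies the mapping property that any homomorphism from a connected-fibered $S$-group into $\scrExtrig_S(A^0,\Gm)$ factors through the fiber product; both finishes are sound and rely on the same underlying facts.
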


\begin{proof}
	First we claim that $\scrExtrig_S(A,\Gm)$ has connected fibers.  Since the exact sequence (\ref{extrigexact})
	commutes with base change, our claim follows from the fact that any extension of (not necessarily commutative) 
	finite type connected  group schemes over a field must be connected.  Indeed, 
	suppose that
	\begin{equation*}
		\xymatrix{
			1\ar[r] & G \ar[r] & E \ar[r] & F \ar[r] & 1
		}
	\end{equation*}
	is such extension.  Since connectedness of any scheme with a rational point is preserved by ground field extension,
	the fibers of $E\rightarrow F$ are connected as they become isomorphic to $G$ after passing to an extension 			field and $G$ is connected.
	 Thus, any separation $\{U,V\}$ of $E$ is a union of fibers
	of $E\rightarrow F$.  
	Since the quotient map $E\rightarrow F$ is faithfully flat and of finite type, it is open, so $\{U,V\}$
	is the pullback of a separation of $F$; by the connectedness of $F$ we conclude that $\{U,V\}$ is trivial 
	and $E$ is connected.
	
	To conclude, since $\scrExtrig_S(A,\Gm)$ has connected fibers 
	it suffices to show that any homomorphism from a commutative $S$-group $H$ with connected fibers to
	$\scrExtrig_S(A^0,G_m)$
	necessarily factors through $\scrExtrig_S(A,\Gm)$.  By the functoriality of $\scrExtrig_S(\cdot,\Gm)$, 
	the top row of (\ref{idencompmap}) is identified with the pullback of the bottom row along the inclusion $\Dual{A}^0\rightarrow \Dual{A}$;
	i.e. we have a canonical isomorphism of smooth groups
	$$\scrExtrig_S(A,\Gm) = \scrExtrig_S(A^0,\Gm) \times_{\Dual{A}} \Dual{A}^0.$$
	Thus, since the composition of $H\rightarrow \scrExtrig_S(A^0,\Gm)$ with the projection to $\Dual{A}$ 
	necessarily factors through the inclusion of $\Dual{A}^0$ into $\Dual{A}$
	as $H$ has connected fibers, we conclude that $H\rightarrow \scrExtrig_S(A^0,\Gm)$ indeed factors through
	the fiber product $\scrExtrig_S(A,\Gm)$, as desired.
\end{proof}

\section{An enhancement of the relative Picard functor}\label{dualizingsec}

We continue to suppose that $S=\Spec R$ with $R$ a discrete valuation ring having field of fractions $K$.  
By a {\em relative curve} $X$ over $S$ we mean a flat finite type and separated $S$-scheme $f:X\rightarrow S$ of pure relative
dimension 1 that is normal with smooth and geometrically connected generic fiber. 	
In this section, we will introduce the functor $\Pic^{\natural,0}_{X/S}$ and prove that it is representable whenever $\Pic^0_{X/S}$
is representable.   

We begin by recalling some general facts about relative dualizing sheaves and Grothendieck duality that will be needed in what follows.
Let $X$ and $Y$ be locally noetherian schemes and $f:X\rightarrow Y$ a Cohen-Macaulay morphism of pure relative dimension $n$.
By \cite[Theorem 3.5.1]{GDBC}, the complex $f^!\O_Y$ has a unique nonzero cohomology sheaf, which is in degree $-n$, and the 
{\em relative dualizing sheaf  of $X$ over $Y$} is
 \begin{equation*}
	\omega_{X/Y}:= H^{-n}(f^!\O_Y).
\end{equation*}
It is flat over $Y$ by \cite[Theorem 3.5.1]{GDBC}, and locally free if and only if the Cohen-Macaulay fibers of $f$ are Gorenstein 
\cite[V, Proposition 9.3, Theorem 9.1]{RD}.   Furthermore, the formation of $\omega_{X/Y}$ is compatible with \'etale localization on $X$ 
(see the discussion preceding \cite[Corollary 4.4.5]{GDBC} ) and with any base change $Y'\rightarrow Y$ where $Y'$ is locally noetherian 
\cite[Theorem 3.6.1]{GDBC}.  When $f$ is in addition proper, there is a natural $\O_Y$-linear trace map
\begin{equation}
	\gamma_f: R^n f_* \omega_{X/Y} \rightarrow \O_Y \label{tracemap}
\end{equation}
which is compatible with any base change $Y'\rightarrow Y$ with $Y'$ locally noetherian \cite[Corollary 3.6.6]{GDBC}.
By Grothendieck-Serre duality \cite[Theorem 4.3.1]{GDBC} the canonical map
\begin{equation}
	\xymatrix{
		{\R f_* \R\scrHom_X^{\bullet}(\F^{\bullet},\omega_{X/Y}[n])}\ar[r]  & {\R\scrHom_Y^{\bullet}(\R f_* \F^{\bullet},\O_Y)}
	},\label{GD}
\end{equation}	
induced by (\ref{tracemap}) is a quasi-isomorphism for any complex $\F^{\bullet}$ in the derived category of
sheaves of $\O_X$-modules whose cohomology is coherent and vanishes in sufficiently negative and positive degrees.

For arbitrary base schemes $Y$ and Cohen-Macaulay morphisms $f:X\rightarrow Y$ of pure relative dimension,
one defines $\omega_{X/Y}$ (and $\gamma_f$ when $f$ is proper) via direct limits and base change from the locally noetherian
case (see \cite[p. 174]{GDBC}); this makes sense due to the aforementioned base-change compatibility in the locally noetherian context
and yields a coherent sheaf of $\O_X$-modules $\omega_{X/Y}$ and a trace map $\gamma_f$ when $f$ is proper
that are compatible with arbitrary base change on $Y$.

Let us apply these considerations to the case of a relative curve $f:X\rightarrow S$.  
Since $X$ is normal and of pure relative dimension
one, it is Cohen-Macaulay by Serre's criterion for normality, so $f$ is Cohen-Macaulay
by part (\rmnum{2}) of the Corollary to Theorem 23.3 in \cite{matsumura}.  Thus, the complex $f^!\O_S$ is a coherent sheaf $\omega_{X/S}$ concentrated in   
degree -1.  By our discussion, $\omega_{X/S}$ is $S$-flat, and of formation compatible with \'etale localization on $X$ and arbitrary base change
on $S$.   When $f$ is $S$-smooth, the theory of the dualizing sheaf provides a canonical identification of the relative dualizing sheaf with the sheaf of
relative differential 1-forms on $X$ over $S$.  It is natural to ask how these two sheaves are related in general:

\begin{proposition}\label{diffmap}
	There is a canonical $\O_X$-linear morphism
	\begin{equation}
		\xymatrix{
			{c_{X/S} : \Omega^1_{X/S}} \ar[r] & {\omega_{X/S}}
		}\label{Cmap}
	\end{equation}
	whose restriction to any $S$-smooth open subset of $X$ is the canonical isomorphism.
\end{proposition}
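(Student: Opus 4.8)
The plan is to construct $c_{X/S}$ as a global section of the coherent sheaf $\mathcal{H} := \scrHom_{\O_X}(\Omega^1_{X/S},\omega_{X/S})$ by first exhibiting it over an open subset whose complement has codimension at least two, and then extending by reflexivity. Note first that $X$ is integral (it is normal and connected, its generic fiber being geometrically connected and, by $S$-flatness, dense) and two-dimensional, hence Cohen--Macaulay; consequently $\omega_{X/S}$ is reflexive, being up to an invertible twist the dualizing sheaf of the Cohen--Macaulay scheme $X$, hence a maximal Cohen--Macaulay and in particular $S_2$ module. Since the target $\omega_{X/S}$ is reflexive and $X$ is normal and integral, the sheaf $\mathcal{H}$ is itself reflexive, hence torsion-free: its sections over any open inject into the generic stalk $\mathcal{H}\otimes_{\O_X} K(X)$, and $j_*(\mathcal{H}|_U)=\mathcal{H}$ for every open $j\colon U\hookrightarrow X$ whose complement has codimension at least two. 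The generic stalk contains the distinguished element $\iota$ given by the canonical isomorphism $\Omega^1_{X_K/K}\xrightarrow{\sim}\omega_{X_K/K}$ on the smooth generic fiber, and the task is to produce, locally, sections of $\mathcal{H}$ restricting to $\iota$.

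On the smooth locus $X^{\sm}$ of $f$ the canonical isomorphism $\Omega^1_{X^{\sm}/S}\xrightarrow{\sim}\omega_{X^{\sm}/S}$ already furnishes a section of $\mathcal{H}$ over $X^{\sm}$ with generic value $\iota$. The only codimension-one points of $X$ missing from $X^{\sm}$ are the generic points $\eta$ of those irreducible components of the closed fiber $X_k$ along which $f$ fails to be smooth. At such an $\eta$ the ring $A := \O_{X,\eta}$ is a discrete valuation ring, flat and essentially of finite type over $R$, and $(\omega_{X/S})_\eta$ is free of rank one, so the heart of the matter is to show $\iota\big((\Omega^1_{X/S})_\eta\big)\subseteq(\omega_{X/S})_\eta$ inside $\omega_{X/S}\otimes K(X)$. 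To this end I would, using that $X_K$ is smooth (so $K(X)$ is separably generated over $K$), factor $f$ near $\eta$ as a finite, generically separable morphism $h$ to an open of the smooth $S$-line followed by the structural map; writing $B=\O_{\A^1_S,\mathfrak q}$ for the local ring at the image point, $B$ is a discrete valuation ring with $\omega_{B/S}=\Omega^1_{\A^1_S/S}\otimes B = B\,dt$, and transitivity of dualizing sheaves gives $(\omega_{X/S})_\eta\cong \omega_{A/B}\otimes_A A\,dt$ with $\omega_{A/B}=\scrHom_B(A,B)$ the complementary module of $B\to A$. Under these identifications the desired inclusion is exactly the classical comparison of the Kähler and Dedekind differents, i.e. the canonical map $\Omega^1_{A/B}\to\omega_{A/B}$, equivalently the fundamental class for the finite map $h$; the resulting local map sends $dt\mapsto dt$ and so has generic value $\iota$.

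Finally I would glue. Each local section constructed at a point $\eta$ spreads out to a section of $\mathcal{H}$ over some open neighborhood of $\eta$, and all the sections so obtained, including the one over $X^{\sm}$, have the same image $\iota$ in the generic stalk. Since $\mathcal{H}$ is torsion-free, two sections with equal generic value agree wherever both are defined, so these sections glue to a single section over an open $U\supseteq X^{\sm}$ containing every codimension-one point of $X$; then $X\setminus U$ has codimension at least two, and reflexivity of $\mathcal{H}$ extends the section uniquely to $c_{X/S}\in\Gamma(X,\mathcal{H})$. By construction $c_{X/S}$ restricts to the canonical isomorphism on $X^{\sm}$, and uniqueness (torsion-freeness of $\mathcal{H}$ again) makes it canonical. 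The main obstacle is the local computation at the bad codimension-one points: showing that genuine relative differentials land inside the dualizing sheaf, which is precisely where the non-smoothness of $f$ is felt and which rests on the different/residue calculus for the finite extension $B\to A$.
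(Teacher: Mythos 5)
The paper does not actually prove this proposition: its ``proof'' is the single line ``See \cite[Proposition 5.1]{CaisDualizing},'' so there is no internal argument to compare yours against, and your proposal must be judged on its own merits. Judged so, the overall strategy is sound: realizing the desired map as a section of $\mathcal{H}=\scrHom_{\O_X}(\Omega^1_{X/S},\omega_{X/S})$, constructing it at all codimension-one points, gluing via torsion-freeness, and extending across codimension two by reflexivity is a legitimate route, and your reductions are correct. In particular: $X$ is integral and Cohen--Macaulay; since $S$ is Gorenstein, $\omega_{X/S}$ is a dualizing module for $X$, hence maximal Cohen--Macaulay and reflexive; $\scrHom$ into a reflexive sheaf on a normal integral scheme is reflexive; the codimension-one points outside $X^{\sm}$ are exactly the generic points $\eta$ of non-smooth components of $X_k$; and $\O_{X,\eta}$ is a discrete valuation ring there, where $\omega_{X/S}$ is invertible because regular local rings are Gorenstein. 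The localization of the whole problem at such $\eta$, via a finite generically separable $B\to A$ with $B$ a local ring of $\A^1_S$, is also the right move (one should note that producing a residually transcendental \emph{separating} element $t$ in residue characteristic $p$ needs a small perturbation, e.g.\ replacing $t_0$ by $t_0+\pi u$, but this works).

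Two points in the crux need repair before this is a proof. First, ``the canonical map $\Omega^1_{A/B}\to\omega_{A/B}$'' cannot be what you mean: $\Omega^1_{A/B}$ is torsion (the extension is generically \'etale) while $\omega_{A/B}=\Hom_B(A,B)$ is torsion-free, so the only such $A$-linear map is zero. What your argument actually needs is the map $\Omega^1_{A/R}\to\omega_{A/B}\otimes_A A\,dt$, i.e.\ the containment asserting that $da/dt$ lies in the inverse different, equivalently $\tr_{L/F}\bigl(b\cdot da/dt\bigr)\in B$ for all $a,b\in A$ (with $L/F$ the fraction field extension). Second, calling that containment ``classical'' conceals the genuine difficulty of this proposition: the classical different calculus assumes separable residue extensions, whereas here $k(\eta)/k(t)$ may be inseparable, so $A$ need not be monogenic over $B$ and the usual $1/g'(x)$ computation is unavailable. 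The statement is still true in the needed generality, and the clean way to close the gap is to observe that a finite flat extension of discrete valuation rings is automatically a complete intersection (the fiber $A/\pi A$ is a quotient of a DVR by one element), so $A$ is a localization of some $B[x_1,\dots,x_n]/(f_1,\dots,f_n)$ with square Jacobian $J$; then Cramer's rule applied to the relations $\sum_i(\partial f_j/\partial x_i)\,dx_i=-(\partial f_j/\partial t)\,dt$ gives $\det(J)\cdot da/dt\in A$ for every $a$, while Tate's lemma on complete intersections identifies $\Hom_B(A,B)$, via the trace form, with $\det(J)^{-1}A$. Finally, your gluing step silently uses that each local section has generic value $\iota$, which for the sections at bad $\eta$ is the assertion that the trace-form trivialization of duality for a generically \'etale map agrees with the canonical identification coming from smoothness; this is true but is one of the standard Grothendieck-duality compatibilities (of the sort \cite{GDBC} is careful about) and must be invoked, not assumed.
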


\begin{proof}
	See \cite[Proposition 5.1]{CaisDualizing}.
\end{proof}

In fact, we can realize $\omega_{X/S}$ as a subsheaf of differentials on $X$ which are regular
on the generic fiber.  Precisely, if $i:U\hookrightarrow X$ is any open subscheme of $X$ containing the generic fiber
then the canonical map $\omega_{X/S}\rightarrow i_*i^*\omega_{X/S}$ is injective as it is an isomorphism
over $X_K$ and $\omega_{X/S}$ is $S$-flat.
Since the formation of $\omega_{X/S}$ is compatible with \'etale localization on $X$, we thus obtain a
natural injective map
\begin{equation}
	\xymatrix{
		{\omega_{X/S}} \ar@{^{(}->}[r] & {i_*\omega_{U/S}}
	}.\label{Umap}
\end{equation}
In particular, taking $U=X_K$ we have $\omega_{U/S}\simeq \Omega^1_{X_K/K}$ by 
the general theory of the dualizing sheaf (or by Proposition \ref{diffmap}),
so $\omega_{X/S}$ is a subsheaf of $i_*\Omega^1_{X_K/K}$.  When $U$ is large enough,
the map (\ref{Umap}) is also surjective:

\begin{lemma}\label{dualizing}
	Suppose that the complement of $U$ in $X$ consists of finitely many closed points of codimension $2$ $($necessarily
	in the closed fiber$)$.  Then the canonical injective map $(\ref{Umap})$ is an isomorphism. 
\end{lemma}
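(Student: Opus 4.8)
The plan is to reduce the assertion to a purely local depth computation at the finitely many missing points and then to read off the required depth from the fact that $\omega_{X/S}$ is, up to shift, the dualizing complex of the Cohen--Macaulay scheme $X$. Since the map (\ref{Umap}) is already known to be injective and is an isomorphism over $U$, it suffices to check that it induces an isomorphism on stalks at each of the finitely many points $z$ of $Z:=X\setminus U$; by hypothesis these are closed points lying in the closed fiber with $\dim\O_{X,z}=2$. Fixing such a $z$, I would set $A:=\O_{X,z}$, $\m:=\m_z$, and $M:=\omega_{X/S,z}$, and write $U_z:=\Spec A\setminus\{\m\}$ for the punctured spectrum; shrinking about $z$ we may assume $z$ is the only point of $Z$ in $\Spec A$, so that $U_z=U\cap\Spec A$. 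Because the formation of $\omega_{X/S}$ is compatible with localization on $X$ we have $i^{*}\omega_{X/S}=\omega_{U/S}$, and the standard local-cohomology exact sequence
\[
0\to H^0_{\m}(M)\to M\to \Gamma(U_z,\widetilde{M})\to H^1_{\m}(M)\to 0
\]
identifies its middle arrow with the stalk of (\ref{Umap}) at $z$. Hence (\ref{Umap}) is an isomorphism if and only if $H^0_{\m}(M)=H^1_{\m}(M)=0$ for every $z\in Z$, i.e. if and only if $\depth_{\m}\omega_{X/S,z}\geq 2$ at each such $z$.

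It then remains to prove that $\depth_{\m}\omega_{X/S,z}=2$ whenever $\dim\O_{X,z}=2$. Here I would use that $X$, being normal of pure dimension $2$, is Cohen--Macaulay by Serre's criterion, exactly as was invoked to see that $f$ is Cohen--Macaulay. Since $R$ is a discrete valuation ring it is Gorenstein, so $\O_S$ is a dualizing complex on $S$; as $f$ is of finite type between noetherian schemes, $f^{!}\O_S=\omega_{X/S}[1]$ is therefore a dualizing complex on $X$. Being concentrated in the single degree $-1$, it exhibits $\omega_{X/S}$ as a dualizing (canonical) module on the Cohen--Macaulay scheme $X$, and such a module is maximal Cohen--Macaulay. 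Consequently $\depth_{\m}\omega_{X/S,z}=\dim\O_{X,z}=2$ at each $z\in Z$, which is precisely what the local-cohomology criterion requires.

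Alternatively, and perhaps more in keeping with the properties emphasized above, one can obtain the same depth bound directly from $S$-flatness. Since $\omega_{X/S}$ is $R$-flat, the depth formula for a flat local homomorphism gives $\depth_{\m}\omega_{X/S,z}=\depth_R R+\depth_{z}\omega_{X_k/k}=1+\depth_{z}\omega_{X_k/k}$, using the base-change identification $\omega_{X/S}\otimes_R k\cong\omega_{X_k/k}$. Because $f$ is Cohen--Macaulay, the closed fiber $X_k$ is a Cohen--Macaulay curve over $k$ whose dualizing sheaf is maximal Cohen--Macaulay; as $z$ is a closed, non-generic point of $X_k$ we get $\depth_{z}\omega_{X_k/k}=\dim\O_{X_k,z}=1$, whence $\depth_{\m}\omega_{X/S,z}=2$ once more. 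In either route the main obstacle is exactly this positivity of depth of a dualizing module: the passage through local cohomology is formal, and the essential structural input is that the (relative) dualizing sheaf is maximal Cohen--Macaulay on $X$ (respectively on its fibers).
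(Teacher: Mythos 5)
Your argument is correct, and while it opens the same way as the paper's proof, the crucial depth bound is established by a genuinely different mechanism. Both proofs reduce the lemma, via local cohomology at the finitely many points of $X\setminus U$, to the bound $\depth_{\O_{X,z}}(\omega_{X/S,z})\ge 2$ (the paper cites SGA2, Exp.~III for this equivalence; you write out the four-term local-cohomology sequence explicitly). From there the paper argues in two cases: at regular points $\omega_{X/S}$ is invertible and normality gives depth $2$; at non-regular points it uses $S$-flatness to reduce to $\depth_{\O_{X_k,x}}(\omega_{X_k/k,x})\ge 1$ --- exactly the flat depth formula of your second route --- and then rules out $\m_x$ being an associated prime by a \emph{global} duality computation on the closed fiber, namely $\Hom_{X_k}(k(x),\omega_{X_k/k})\simeq H^1(X_k,k(x))^{\vee}=0$. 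You instead appeal to the \emph{local} theory of canonical modules: since the discrete valuation ring $R$ is Gorenstein, $f^!\O_S=\omega_{X/S}[1]$ is a dualizing complex on $X$ concentrated in a single degree, so $\omega_{X/S}$ is a canonical module on the Cohen--Macaulay scheme $X$ and hence maximal Cohen--Macaulay; this gives depth exactly $2$ at every point of $X\setminus U$ with no case distinction (and your alternative route applies the same maximal Cohen--Macaulay fact to $\omega_{X_k/k}$ on the fiber, after the flat reduction). What your route buys: it is purely local, uniform over all missing points, and in particular makes no use of properness, whereas the paper's identification $\Hom_{X_k}(k(x),\omega_{X_k/k})\simeq H^1(X_k,k(x))^{\vee}$ is global duality on $X_k$ and implicitly assumes the fiber is proper --- a hypothesis absent from the statement of the lemma (a relative curve in this paper need not be proper), though satisfied in every application the paper makes of it. What the paper's route buys: it stays entirely within the toolkit already assembled from Conrad's book (flatness, base change, and trace-map duality), never needing the identification of the relative dualizing sheaf with a canonical module nor the theorem that canonical modules over Cohen--Macaulay local rings are maximal Cohen--Macaulay.
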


\begin{proof}
	We follow the proof given below (5.2.7) of \cite{GDBC}.  By standard arguments,
	it suffices to show that the local cohomology groups $H^1_x(X,\omega_{X/S})$ vanish for all $x\in X-U$.  Due to \cite[Exp. III, Example 3.4]{SGA2},
	such vanishing is equivalent to 
	$$\depth_{\O_{X,x}}(\omega_{X/S,x})\ge 2.$$
	If $x \in X-U$ is a regular point, then this inequality is trivially verified as $\omega_{X,x}$ is a free $\O_{X,x}$-module of rank 1
	for such $x$ (regular local rings are Gorenstein), and $\O_{X,x}$ is $2$-dimensional and normal (whence it has depth 2 by Serre's criterion for normality).
	
	 In general, by \cite[Exp. III, Corollary 2.5]{SGA2} it is enough to show that for each non-regular point $x$ of the closed fiber $X_k$ of $X$ we have	\begin{equation}
		\depth_{\O_{X_k,x}}(\omega_{X_k/k,x})\ge 1.\label{clsdfiberdepth}
	\end{equation}
	If this is not the case, then the maximal ideal $\m_x$ of $\O_{X_k,x}$ consists entirely of zero-divisors for the finite
	$\O_{X_k,x}$-module $\omega_{X_k/k,x}$, so it must be an associated prime of $\omega_{X_k/k,x}$.
	We would then have $\m_x=\Ann(s)$ for some nonzero $s\in\omega_{X_k/k,x}$ whence
	$\Hom_{X_k}(k(x),\omega_{X_k/k})\neq 0$.  However,
	\begin{equation}
		\Hom_{X_k}(k(x),\omega_{X_k/k}) =H^1(X_k,k(x))^{\vee}\label{kxdual}
	\end{equation}
	by Grothendieck duality for the $k$-scheme $X_k$ (see Corollary 5.1.3 and the bottom half of page 224 in \cite{GDBC}),
	and we know that the right side of (\ref{kxdual}) is zero (since $k(x)$ is a skyscraper sheaf supported at the point $x$), which
	is a contradiction.  Thus, $\m_x$ contains an $\omega_{X_k/k,x}$-regular element, so (\ref{clsdfiberdepth}) holds, as desired.
\end{proof}

When $f:X\rightarrow S$ is in addition proper, so we have a trace map (\ref{tracemap}),
we may apply the machinery of Grothendieck duality.  For our purposes, we need only the following:
			
\begin{proposition}\label{GDexplicit}
	If $f:X\rightarrow S$ is a proper relative curve then the canonical map of flat $\O_S$-modules
	\begin{equation}
		\xymatrix{
			{f_*\omega_{X/S}} \ar[r] &  {(R^1f_*\O_X)^{\vee}}
		}\label{duality1}
	\end{equation}		
	induced by cup product and the trace map $(\ref{tracemap})$ is an isomorphism.  Furthermore, there is a natural short exact 
	sequence of $\O_S$-modules
	\begin{equation}
		\xymatrix{
			0\ar[r] & {\scrExt_S^1(R^1f_*\O_X,\O_S)} \ar[r] & {R^1f_*\omega_{X/S}} \ar[r] & {(f_*\O_X)^{\vee}}\ar[r] & 0
		}.\label{duality2}
	\end{equation}
	In particular, if $f$ is cohomologically flat $($in dimension $0)$ then $R^1f_*\omega_{X/S}$
	is a locally free $\O_S$-module.
\end{proposition}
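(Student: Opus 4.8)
The plan is to feed $\F^\bullet = \O_X$ into the Grothendieck--Serre duality quasi-isomorphism (\ref{GD}) and then read off both assertions by a completely explicit computation, exploiting that $S=\Spec R$ is affine with $R$ of global dimension $1$. First I would observe that, since $f$ is proper, flat and finitely presented with fibers of dimension $\le 1$, the complex $\R f_*\O_X$ is perfect of amplitude contained in $[0,1]$. As $R$ is a (local) discrete valuation ring, such a complex is represented by a two-term complex $[K^0\xrightarrow{d}K^1]$ of finite free $R$-modules placed in degrees $0$ and $1$, with $f_*\O_X = \ker d$ and $R^1f_*\O_X = \coker d$. Taking $n=1$ and using $\R\scrHom_X^\bullet(\O_X,\omega_{X/S}[1])\simeq \omega_{X/S}[1]$ in (\ref{GD}) produces a quasi-isomorphism
$$
(\R f_*\omega_{X/S})[1]\;\simeq\;\R\scrHom_S^\bullet(\R f_*\O_X,\O_S)\;\simeq\;\bigl[\,(K^1)^\vee\xrightarrow{d^\vee}(K^0)^\vee\,\bigr],
$$
where the right-hand complex sits in degrees $-1$ and $0$.

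Next I would take cohomology of this complex. In degree $-1$ we obtain $\ker d^\vee$, and since a functional on $K^1$ killing $\operatorname{im}d$ is the same as a functional on $\coker d$, this is exactly $(\coker d)^\vee = (R^1f_*\O_X)^\vee$; identifying the degree $-1$ cohomology of the left-hand side with $f_*\omega_{X/S}$ yields (\ref{duality1}). In degree $0$ we obtain $\coker d^\vee$. Writing $d^\vee$ as the composite $(K^1)^\vee\to(\operatorname{im}d)^\vee\hookrightarrow(K^0)^\vee$ (the second map injective because $\operatorname{im}d\subseteq K^1$ is free), the two short exact sequences $0\to f_*\O_X\to K^0\to\operatorname{im}d\to 0$ and $0\to\operatorname{im}d\to K^1\to R^1f_*\O_X\to 0$, dualized into $R$, identify the cokernel of the first map with $\scrExt_S^1(R^1f_*\O_X,\O_S)$ and the cokernel of the second with $(f_*\O_X)^\vee$; an elementary diagram chase then assembles these into the short exact sequence (\ref{duality2}), with $R^1f_*\omega_{X/S}$ in the middle. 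Over a non-affine base one would instead run the hyperext spectral sequence $\scrExt_S^p(H^{-q}(\R f_*\O_X),\O_S)\Rightarrow H^{p+q}$, which degenerates at $E_2$ because $\scrExt_S^p(-,\O_S)$ vanishes for $p\ge 2$; the affine setting lets us bypass it entirely.

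For the final assertion I would invoke the standard characterization, valid over a discrete valuation ring, that $f$ is cohomologically flat in dimension $0$ if and only if $R^1f_*\O_X$ is torsion-free, hence locally free. In that case $\scrExt_S^1(R^1f_*\O_X,\O_S)=0$, so (\ref{duality2}) collapses to an isomorphism $R^1f_*\omega_{X/S}\simeq (f_*\O_X)^\vee$; since $f_*\O_X=\ker d$ is a submodule of the free module $K^0$ and hence itself locally free, its dual is locally free and the claim follows.

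The hard part—indeed the only genuinely non-formal point—will be verifying that the abstract duality isomorphism in degree $-1$ coincides with the concrete pairing (\ref{duality1}) built from cup product and the trace map $\gamma_f$. This compatibility is part of the Grothendieck duality formalism (the trace map underlying (\ref{GD}) is exactly $\gamma_f$, and the resulting pairing is cup product), so I would cite it rather than reprove it; the remaining homological bookkeeping, including getting the sub-object and quotient in (\ref{duality2}) in the correct order, is routine once the finite free representative of $\R f_*\O_X$ is in hand.
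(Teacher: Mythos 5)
Your proof is correct and shares its skeleton with the paper's: both apply the duality quasi-isomorphism (\ref{GD}) with $\F^{\bullet}=\O_X$ and then extract (\ref{duality1}) and (\ref{duality2}) from the cohomology of $\R\scrHom_S^{\bullet}(\R f_*\O_X,\O_S)$, both handle the cup-product/trace compatibility by citation to the duality formalism rather than by direct computation, and both conclude the final claim by noting that cohomological flatness kills $\scrExt_S^1(R^1f_*\O_X,\O_S)$, leaving $R^1f_*\omega_{X/S}\simeq(f_*\O_X)^{\vee}$ with $(f_*\O_X)^{\vee}$ locally free. The genuine difference is the computational engine. The paper computes $H^{-1}$ and $H^0$ of the dual complex via the hyperext spectral sequence $E_2^{m,n}=\scrExt_S^m(H^{-n}(\R f_*\O_X),\O_S)$, whose only nonzero terms have $m=0,1$ and $n=0,-1$; this degenerates and yields both statements at once, with no appeal to the base being affine, local, or one-dimensional, and it keeps the naturality of (\ref{duality2}) manifest. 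You instead represent $\R f_*\O_X$ by a two-term complex $[K^0\to K^1]$ of finite free $R$-modules (perfectness plus tor-amplitude in $[0,1]$, available because $S$ is affine and $R$ is a local ring) and carry out the dualization by elementary linear algebra over a PID; in effect you re-derive the degenerate spectral sequence by hand. This buys concreteness at the price of portability, as you yourself note: over a general base one would fall back on exactly the paper's argument.

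One bookkeeping remark: the injectivity of $(\operatorname{im}d)^{\vee}\hookrightarrow(K^0)^{\vee}$ needs only the surjectivity of $K^0\twoheadrightarrow\operatorname{im}d$, not freeness of $\operatorname{im}d$. Where the freeness of $\operatorname{im}d$ (a finitely generated submodule of the free module $K^1$ over the discrete valuation ring $R$) is genuinely used is in dualizing the sequence $0\to f_*\O_X\to K^0\to\operatorname{im}d\to 0$: it gives $\Ext_R^1(\operatorname{im}d,R)=0$, which is what identifies the cokernel of that injection with $(f_*\O_X)^{\vee}$. Your chase goes through verbatim once this is relocated; it is a misattribution, not a gap.
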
	
		
\begin{proof}
	Since $\scrHom_X(\O_X,\cdot)$ is naturally isomorphic to the identity functor,
	(\ref{GD}) with $\F^{\bullet} = \O_X$ (thought of as a complex in degree zero) yields a quasi-isomorphism
	\begin{equation}
		\R f_*\omega_{X/S}[1] \simeq \R \scrHom_S^{\bullet}(\R f_*\O_{X}, \O_S).\label{qism}
	\end{equation}
	Applying $H^{-1}$ to (\ref{qism}) and using the spectral sequence
	\begin{equation}
		E_2^{m,n} = \scrExt_S^m(H^{-n}(\R f_*\O_X),\O_S) \implies H^{m+n}(\R \scrHom^{\bullet}_S(\R f_*\O_X, \O_S))\label{spectral}
	\end{equation}
	(whose only nonzero terms occur when $m=0,1$ and $n=0,-1$) to calculate the right side, we obtain a natural isomorphism 
	$f_*\omega_{X/S}\simeq {(R^1f_*\O_X)^{\vee}}$.	To know that this map coincides with the map (\ref{duality1}) induced by cup product and the trace map $\gamma_f$, one proceeds
	as in the proof of \cite[Theorem 5.1.2]{GDBC}.  
	Similarly, applying $H^0$ to (\ref{qism}) and using (\ref{spectral}), we arrive at the short exact sequence (\ref{duality2}).
	  For the final statement of the proposition, recall that by definition $f$ is cohomologically flat in dimension 0 if $f_*\O_X$ commutes with arbitrary base change,
	which holds if and only if $R^1f_*\O_X$ is locally free.  Thus, when $f$ is cohomologically flat, the sheaf $\scrExt^1_S(R^1f_*\O_X,\O_S)$ 
	vanishes and it follows easily from (\ref{duality2}) that $R^1f_*\omega_{X/S}$ is locally free over $S$.
\end{proof}

We record here the following Corollary, which shows that the relative dualizing sheaf is in general much better behaved than
the sheaf of relative differential 1-forms:

\begin{corollary}\label{locfreeBC}
	  Let $f:X\rightarrow S$ be a proper relative curve, and assume that $f$ is cohomologically flat in dimension 0.  Then
 	for all $i\ge 0$, the $\O_S$-module  $R^if_*\omega_{X/S}$ is locally free and commutes with arbitrary base change on $S$.
\end{corollary}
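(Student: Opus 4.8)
The plan is to reduce the statement to an explicit two-term complex of finite free modules and to exploit that $R^1f_*$, being the top non-vanishing direct image, commutes with every base change automatically. Since each geometric fiber of $f$ is a one-dimensional scheme, coherent cohomology vanishes in degrees $\ge 2$; hence $R^if_*\omega_{X/S}=0$ for $i\ge 2$, compatibly with any base change (the fibers of $f_{S'}$ are again curves), and only $i=0,1$ require argument. Because $R$ is a noetherian discrete valuation ring and $\omega_{X/S}$ is coherent and $S$-flat, the standard theory of cohomology and base change for the proper morphism $f$ produces a complex $K^{\bullet}=\left[K^0\xrightarrow{\,d\,}K^1\right]$ of finite free $R$-modules, concentrated in degrees $0$ and $1$ since the fibers are curves, together with a natural isomorphism
\begin{equation*}
	R^if_*(\omega_{X/S}\otimes_R M)\cong H^i(K^{\bullet}\otimes_R M)
\end{equation*}
for every $R$-module $M$. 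As the formation of $\omega_{X/S}$ commutes with base change, for any $S'\to S$ the pulled-back complex $K^{\bullet}\otimes_R\O_{S'}$ computes $R^if_{S'*}\omega_{X_{S'}/S'}$; it therefore suffices to understand the cohomology of $K^{\bullet}$ and its interaction with the functor $-\otimes_R M$.

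For $i=1$ I would argue as follows. Since $H^1(K^{\bullet}\otimes_R M)=\coker(d\otimes_R M)=\coker(d)\otimes_R M$ by right-exactness of $-\otimes_R M$, the sheaf $R^1f_*\omega_{X/S}=\coker(d)$ commutes with arbitrary base change; moreover it is locally free by the final assertion of Proposition \ref{GDexplicit}, which is exactly where the cohomological flatness of $f$ enters (it forces $\scrExt_S^1(R^1f_*\O_X,\O_S)=0$ in the sequence (\ref{duality2})). The key observation is then that local freeness of $\coker(d)$ propagates leftward: as $\coker(d)$ is projective, the surjection $K^1\twoheadrightarrow\coker(d)$ splits, so $\im(d)$ is a direct summand of $K^1$ and is itself projective; consequently the surjection $K^0\twoheadrightarrow\im(d)$ also splits, exhibiting $\ker(d)=f_*\omega_{X/S}$ as a direct summand of the free module $K^0$. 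In particular $f_*\omega_{X/S}$ is locally free.

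It remains to treat base change for $i=0$, which is the only genuinely delicate point. The splittings $K^0\cong\ker(d)\oplus C$, with $d$ restricting to an isomorphism $C\xrightarrow{\sim}\im(d)$, and $K^1\cong\im(d)\oplus\coker(d)$ reduce $d$ to the map that annihilates $\ker(d)$ and carries $C$ isomorphically onto the summand $\im(d)$. Tensoring these splittings with any $R$-module $M$ shows that $d\otimes_R M$ kills $\ker(d)\otimes_R M$ and sends $C\otimes_R M$ isomorphically onto $\im(d)\otimes_R M$, whence $\ker(d\otimes_R M)=\ker(d)\otimes_R M$. Thus $H^0(K^{\bullet}\otimes_R M)=f_*\omega_{X/S}\otimes_R M$, i.e.\ $f_*\omega_{X/S}$ commutes with arbitrary base change, completing the proof. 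I expect this base-change statement for $f_*\omega_{X/S}$ to be the main obstacle: it cannot be obtained merely by dualizing the isomorphism $f_*\omega_{X/S}\cong(R^1f_*\O_X)^{\vee}$ of Proposition \ref{GDexplicit}, since formation of duals does not commute with base change, and the splitting argument above is designed precisely to circumvent this difficulty.
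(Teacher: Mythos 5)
Your proof is correct, and it rests on the same essential input as the paper's: local freeness of $R^1f_*\omega_{X/S}$ is extracted from Proposition \ref{GDexplicit}, i.e.\ from cohomological flatness killing the $\scrExt^1_S$ term in (\ref{duality2}). But the surrounding architecture is genuinely different. The paper's proof is a reduction: by ``standard arguments on base change'' it suffices to prove that $R^if_*\omega_{X/S}$ is locally free for every $i$, and this is checked case by case --- vanishing for $i\ge 2$ by the theorem on formal functions, freeness for $i=0$ directly from $S$-flatness of $\omega_{X/S}$ (a finite torsion-free module over the discrete valuation ring $R$ is free), and Proposition \ref{GDexplicit} for $i=1$. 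You instead open up the black box: you build the Grothendieck complex $K^0\xrightarrow{d}K^1$, note that the top cohomology $\coker(d)$ commutes with base change for free by right-exactness of tensor, and then run the splitting argument to deduce local freeness of $f_*\omega_{X/S}$ \emph{and} its base-change compatibility simultaneously from local freeness of $\coker(d)$. This buys self-containedness (you reprove the standard cohomology-and-base-change machinery rather than cite it) and a touch of generality (your $i=0$ step never uses that $R$ is a DVR, so it works over any noetherian base); the cost is that your $i=0$ local freeness depends on cohomological flatness, whereas the paper's does not. One small correction: your closing claim that base-change compatibility of $f_*\omega_{X/S}$ \emph{cannot} be obtained by dualizing (\ref{duality1}) is overstated. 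The trace map (\ref{tracemap}) is compatible with arbitrary base change, hence so is the isomorphism (\ref{duality1}); since $R^1f_*\O_X$ is finite locally free by cohomological flatness and, being top-degree, commutes with base change, its $\O_S$-linear dual does as well, and this gives an alternative route. Your splitting argument simply avoids having to invoke the base-change compatibility of the trace map.
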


\begin{proof}
	By standard arguments on base change, it is enough to show that $R^if_*\omega_{X/S}$
	is locally free for $i\ge 0$.  This holds for $i\ge 2$ by the theorem on formal functions (as then $R^if_*\omega_{X/S}=0$),
	  and for $i=0$ since $\omega_{X/S}$ is $S$-flat.  For $i=1$, it follows immediately from
	  Proposition \ref{GDexplicit}. 
\end{proof}

For a relative curve $f:X\rightarrow S$, we now wish to apply the preceding considerations to define a natural enhancement $\Pic^{\natural}_{X/S}$
of the relative Picard functor classifying invertible sheaves with the additional data of a ``regular connection."  

Let $T$ be any $S$-scheme.  Since both the sheaf of relative differential 1-forms and the relative dualizing sheaf are compatible with 
base change, via pullback along $T\rightarrow S$ we obtain from (\ref{Cmap}) a natural morphism 
$\Omega^1_{X_T/T}\rightarrow \omega_{X_T/T}$, and hence an $\O_T$-linear derivation 
\begin{equation*}
	\xymatrix{
		{d_T: \O_T}\ar[r] & \omega_{X_T/T}
		}.
\end{equation*}

Fix a line bundle $\L$ on $X_T$.  Recall that a {\em connection} on $\L$ over $T$ is an $\O_T$-linear homomorphism
${\nabla:\L}\rightarrow{\L\otimes_{\O_T} \Omega^1_{X_T/T}}$ satisfying the usual Leibnitz rule.  When $X$ is not $S$-smooth,
this notion is not generally well-behaved, and it is often desirable to allow connections to have certain types of poles 
along the singularities of $X$.  For our purposes, the right notion of a connection is:

\begin{definition}\label{regconn}
	A {\em regular connection} on $\L$ over $T$ is an $\O_T$-linear homomorphism
	\begin{equation*}
		\xymatrix{
			{\nabla:\L}\ar[r] & {\L\otimes_{\O_{X_T}} \omega_{X_T/T} }
		}
	\end{equation*}	
	satisfying the Leibnitz rule: 
	$\nabla(h\eta)= \eta\otimes d_T(h)+h\nabla{\eta}$ for any sections $h$ of $\O_{X_T}$ and $\eta$ of $\L$.
	  A {\em morphism} of line bundles with regular connection over $T$ is an $\O_{X_T}$-linear morphism of the underlying line bundles
	  that is compatible with the given connections.
\end{definition}	  

\begin{remark}
	Observe that any connection $\nabla:\L\rightarrow\L\otimes \Omega^1_{X_T/T}$ on $\L$ over $T$ gives rise to a regular connection 
	on $\L$ over $T$ via composition with the map induced by $\Omega^1_{X_T/T}\rightarrow\omega_{X_T/T}$.
\end{remark}

If $\L$ and $\L'$ are two line bundles on $X_T$ equipped with regular connections $\nabla$ and $\nabla'$ over $T$,
then the tensor product $\L\otimes_{\O_{X_T}}\L'$ is naturally equipped with the tensor product regular connection $\nabla\otimes \nabla'$ induced by  decreeing
$$(\nabla \otimes\nabla')(\eta\otimes\eta'):= \eta\otimes\nabla'(\eta')+ \eta'\otimes\nabla(\eta).$$
for any sections $\eta$ of $\L$ and $\eta'$ of $\L'$. 
Observe that with respect to this operation, the pair $(\O_{X_T},d_T)$ serves as an identity element.
Thus, the set of isomorphism classes of line bundles on $X_T$ with a regular connection over $T$ has a natural abelian group structure which is obviously 
compatible with our definition of a morphism of line bundles with connection.
Furthermore, if $T'\rightarrow T$ is any morphism of $S$-schemes, then since the formation of $\omega_{X/S}$ is compatible with 
base change, any line bundle on $X_T$ with regular connection over $T$ pulls back to a line bundle on $X_{T'}$ with regular connection over $T'$.

\begin{definition}	
	Denote by $P^{\natural}_{X/S}$ the contravariant functor from the category of $S$-schemes to the category of abelian groups
	given on an $S$-scheme $T$ by
	\begin{equation*}
	 	P^{\natural}_{X/S}(T):=\left\{\parbox{6.5cm}{Isomorphism classes of pairs $(\L,\nabla)$ consisting of a line bundle $\L$ on $X_T$
		equipped with a regular connection $\nabla$ over $T$}\right\},
	\end{equation*}
	and write $\Pic_{X/S}^{\natural}$ for the fppf sheaf associated to $P^{\natural}_{X/S}$.
\end{definition}

As is customary, we will denote by $P_{X/S}$ the contravariant functor on the category of $S$-schemes which associates to an $S$-scheme $T$
the set of isomorphism classes of line bundles on $X_T$, and by $\Pic_{X/S}$ the fppf sheaf on the category of $S$-schemes associated to $P_{X/S}$.
For any $S$-scheme $T$, there is an obvious homomorphism of abelian groups $P^{\natural}_{X/S}(T) \rightarrow P_{X/S}(T)$ given 
by ``forgetting the connection," and hence a map of fppf abelian sheaves 
\begin{equation}
	\xymatrix{
		{\Pic_{X/S}^{\natural}} \ar[r] & \Pic_{X/S}
	}.\label{forget}
\end{equation}
We wish to define a certain subfunctor of $\Pic^{\natural}_{X/S}$ which will play the role of ``identity component" and which will
enjoy good representability properties.   We adopt the following definition:
\begin{definition}\label{natiden}
	Let $\Pic_{X/S}^{\natural,0}$ be the fppf abelian sheaf on the category of $S$-schemes given by
	$$\Pic_{X/S}^{\natural,0}:=\Pic^{\natural}_{X/S} \times_{\Pic_{X/S}} \Pic^0_{X/S}.$$
\end{definition}
Here, $\Pic^0_{X/S}$ denotes the identity component of the group functor $\Pic_{X/S}$ (whose fibers are representable; see
\cite[p. 459]{LiuNeron} and compare \cite[p. 233]{BLR}).  Alternately, $\Pic^0_{X/S}$ the open subfunctor of $\Pic_{X/S}$ classifying line bundles
of partial degree zero on each irreducible component of every geometric fiber \cite[\S 9.2, Corollary 13]{BLR}.

\begin{theorem}	\label{rep}
		Let $f:X\rightarrow S$ be a proper relative curve and
	suppose that the greatest common divisor of the geometric multiplicities of the irreducible components of the closed fiber $X_k$ of $X$ is 1.
	Then $\Pic^{\natural,0}_{X/S}$ is a smooth $S$-scheme and there is a short exact sequence of smooth group schemes 
		 over $S$
	\begin{equation}
		\xymatrix{
			0\ar[r] & {f_*\omega_{X/S}} \ar[r] & {\Pic^{\natural,0}_{X/S}} \ar[r] & {\Pic^0_{X/S}} \ar[r] & 0 
		}.\label{gpseq}
	\end{equation}
\end{theorem}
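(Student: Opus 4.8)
The plan is to realize $\Pic^{\natural,0}_{X/S}$ inside a four-term exact sequence obtained from the hypercohomology of the two-term complex $C^{\bullet}:=[\O_X^{\times}\xrightarrow{d\log}\omega_{X/S}]$, placed in degrees $0$ and $1$, where $d\log$ is the composite of the logarithmic derivative with the map $c_{X/S}$ of Proposition~\ref{diffmap}. First I would record the consequences of the hypothesis that the relevant gcd is $1$: by Raynaud's results (\cite[\S9.4]{BLR}) the morphism $f$ is cohomologically flat in dimension $0$, so $f_*\O_X=\O_S$ universally, the fppf sheaf $\Pic_{X/S}$ is represented by a smooth $S$-group scheme with open identity component $\Pic^0_{X/S}$, and by Corollary~\ref{locfreeBC} the sheaf $R^1f_*\omega_{X/S}$ is locally free. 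Feeding $f_*\O_X=\O_S$ into the duality sequence (\ref{duality2}) of Proposition~\ref{GDexplicit} (whose $\scrExt^1$-term now vanishes) identifies $R^1f_*\omega_{X/S}\cong (f_*\O_X)^{\vee}=\O_S$; that is, the target of the top differential is just the line $\O_S$.

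Next, using \v{C}ech cocycles I would identify, for each $S$-scheme $T$, the group $P^{\natural}_{X/S}(T)$ of line bundles with regular connection with $\H^1(X_T,C^{\bullet}_T)$: such a datum is precisely a cocycle $(g_{ij},\eta_i)$ with $g_{ij}\in\O^{\times}$ on overlaps and $\eta_i$ a section of $\omega_{X_T/T}$ satisfying $\eta_i-\eta_j=d\log g_{ij}$, the Leibnitz rule of Definition~\ref{regconn} encoding exactly this compatibility. Sheafifying gives $\Pic^{\natural}_{X/S}\cong R^1f_*C^{\bullet}$ in the fppf topology. The triangle $\omega_{X/S}[-1]\to C^{\bullet}\to\O_X^{\times}$ then yields the long exact sequence of fppf sheaves whose relevant segment is
\begin{equation*}
\xymatrix{
{f_*\O_X^{\times}} \ar[r]^-{d\log} & {f_*\omega_{X/S}} \ar[r] & {\Pic^{\natural}_{X/S}} \ar[r] & {\Pic_{X/S}} \ar[r]^-{d\log} & {R^1f_*\omega_{X/S}}.
}
\end{equation*}
Because cohomological flatness forces global units on $X_T$ to be pulled back from $T$, and the relative derivation $d_T$ kills such units, the leftmost map is zero; hence $f_*\omega_{X/S}\hookrightarrow\Pic^{\natural}_{X/S}$ and the displayed sequence is left exact, with image of the forgetful map (\ref{forget}) equal to the kernel of $d\log$.

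Finally I would restrict to identity components. Pulling the displayed sequence back along $\Pic^0_{X/S}\hookrightarrow\Pic_{X/S}$ (Definition~\ref{natiden}) produces a left-exact sequence $0\to f_*\omega_{X/S}\to\Pic^{\natural,0}_{X/S}\to\Pic^0_{X/S}\xrightarrow{d\log}R^1f_*\omega_{X/S}$, so it remains to show that $d\log$ annihilates $\Pic^0_{X/S}$; granting this, the map $\Pic^{\natural,0}_{X/S}\to\Pic^0_{X/S}$ is fppf-surjective and we obtain (\ref{gpseq}). Since $R^1f_*\omega_{X/S}\cong\O_S$ is $S$-separated and $\Pic^0_{X/S}$ is $S$-smooth, hence $S$-flat with schematically dense generic fibre, it suffices to check $d\log=0$ on $\Pic^0_{X_K/K}$; there $\omega_{X_K/K}=\Omega^1_{X_K/K}$ and $d\log$ is the first Chern class $\Pic_{X_K}\to H^1(X_K,\Omega^1_{X_K/K})\cong K$, which is a scalar multiple of the degree and so kills degree-zero line bundles. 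For representability and smoothness, $f_*\omega_{X/S}$ is a vector group and $\Pic^0_{X/S}$ is $S$-smooth, while $\Pic^{\natural,0}_{X/S}\to\Pic^0_{X/S}$ is an fppf torsor under the smooth affine $S$-group $f_*\omega_{X/S}$; by fppf descent such a torsor is representable by a smooth affine morphism exactly as in \cite[\Rmnum{3}, Proposition~17.4]{oort} (cf. the proof of Proposition~\ref{repExtrig}), whence $\Pic^{\natural,0}_{X/S}$ is a smooth $S$-group scheme.

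The main obstacle I anticipate is the \v{C}ech identification of the second step: one must justify that isomorphism classes of line bundles with regular connection are computed by $\H^1$ of $C^{\bullet}$, and that the presheaf $T\rightsquigarrow\H^1(X_T,C^{\bullet}_T)$ sheafifies to $R^1f_*C^{\bullet}$, all while working with the dualizing sheaf $\omega_{X/S}$ (rather than $\Omega^1_{X/S}$) and the derivation $c_{X/S}$, and comparing the Zariski, \'etale and fppf cohomology of the coherent term so that the long exact sequence is genuinely one of fppf sheaves. The remaining inputs---cohomological flatness, the duality identification $R^1f_*\omega_{X/S}\cong\O_S$, and the density argument on the generic fibre---are then essentially formal.
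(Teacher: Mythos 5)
Your proposal is correct and follows essentially the same route as the paper: the paper likewise identifies $\Pic^{\natural}_{X/S}$ \v{C}ech-theoretically with the hypercohomology of $d\log\colon \O_{X}^{\times}\rightarrow\omega_{X/S}$ (Lemma \ref{identify}), sheafifies to the four-term sequence (\ref{close}), kills the connecting map on $\Pic^0_{X/S}$ by a generic-fibre argument, and concludes by fppf descent exactly as in the proof of Proposition \ref{repExtrig}. The only (harmless) variations are that you prove the generic-fibre vanishing via the Chern-class/degree interpretation of $d\log$, whereas the paper simply observes that any homomorphism from an abelian variety to the affine $S$-group $R^1f_*\omega_{X/S}$ is zero, and that your passing claim that all of $\Pic_{X/S}$ is a smooth group scheme overstates Raynaud's results (one only gets an algebraic space in general) --- but your argument never actually uses this, only the representability and smoothness of $\Pic^0_{X/S}$.
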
	

To prove Theorem \ref{rep}, we will first construct (\ref{gpseq}) as an exact sequence of fppf abelian sheaves.  By work of Raynaud \cite[Theorem 8.2.1]{RaynaudPic}
(or \cite[\S9.4 Theorem 2]{BLR}),
the hypotheses on $X$ imply that $\Pic^0_{X/S}$ is a separated $S$-group scheme which is smooth by \cite[\S8.4 Proposition 2]{BLR}. 
On the other hand, our hypotheses ensure that $X$ is cohomologically flat in dimension zero, whence $f_*\omega_{X/S}$
is a vector group (in particular, it is smooth and separated) by Corollary \ref{locfreeBC}.
A straightforward descent argument will complete the proof.

We will begin by constructing the exact sequence (\ref{gpseq}).  
Fix an $S$-scheme $T$ and consider the natural map (\ref{forget}).
The kernel of this map consists of all isomorphism classes represented by pairs of the form $(\O_{X_T},\nabla)$, where $\nabla$ is a regular connection
on $\O_{X_T}$ over $T$.  By the Leibnitz rule, $\nabla$ is determined up to isomorphism by the value $\nabla(1)\in \Gamma({X_T},\omega_{X_T/T})$.
Since two pairs $(\O_{X_T},\nabla)$ and $(\O_{X_T},\nabla')$ are isomorphic precisely when there is a unit $u\in \Gamma(X_T,\O_{X_T}^{\times})$ satisfying
$$\nabla(1) = \nabla'(1)+  h^{-1}\cdot{d_Tu}$$
we see that the kernel of (\ref{forget}) is naturally identified with $H^0(X_T,\omega_{X_T/T})$ modulo
the image of the map 
\begin{equation}
	\xymatrix{
		{d_T\log:H^0(X_T,\O_{X_T}^{\times})} \ar[r] & {H^0(X_T,\omega_{X_T/T})} 
	}\label{dtlog}
\end{equation}	
that sends a global section $u$ of $\O_{X_T}$ to $u^{-1}\cdot d_Tu$.
Since pushforward by the base change ${f_T}_*$  of $f$ is left exact, we know that ${f_T}_*\O_{X_T}^{\times}$ is a subsheaf
of ${f_T}_*\O_{X_T}$.  By \cite[Th\'eor\`eme 7.2.1]{RaynaudPic}, the hypotheses on $X$ ensure that $f$ is cohomologically flat, so 
${f_T}_*\O_{X_T}\simeq \O_T$.  Since $d_T$ annihilates $\Gamma(T,\O_T)$, we conclude that  the map (\ref{dtlog}) is zero.

We thus arrive at a short exact sequence of abelian groups
\begin{equation}
	\xymatrix{
		0\ar[r] & {H^0(X_T,\omega_{X_T/T})} \ar[r] & {P^{\natural}_{X/S}(T)}\ar[r] & {P_{X/S}(T)}
	}\label{exactPic}
\end{equation}
that is easily seen to be functorial in $T$.
In order to construct the exact sequence of fppf abelian sheaves (\ref{gpseq}), we need to extend (\ref{exactPic}).
To do this, we use \v{C}ech theory to interpret (\ref{exactPic}) as part of a long exact sequence of cohomology groups. 

Consider the two-term complex (in degrees 0 and 1) $d_T\log:\O_{X_T}^{\times}\rightarrow \omega_{X_T/T}$
given by sending a section $u$ of $\O_{X_T}^{\times}$ to $u^{-1}\cdot d_Tu$; we will denote this complex by $\omega_{X_T/T}^{\times,\bullet}$.   
The evident short exact sequence of complexes
\begin{equation*}
	\xymatrix{
		0\ar[r] & {\omega_{X_T/T}[-1]} \ar[r] & {\omega_{X_T/T}^{\times,\bullet}}\ar[r] & {\O_{X_T}^{\times}} \ar[r] & 0
	}
\end{equation*}
yields (since $d_T\log:H^0(X_T,\O_{X_T}^{\times})\rightarrow H^0(X_T,\omega_{X_T/T})$ is the zero map) a long exact sequence in hypercohomology
\begin{equation}
	\xymatrix{
		0\ar[r] & {H^0(X_T,\omega_{X_T/T})}\ar[r] & {\H^1(X_T,\omega_{X_T/T}^{\times,\bullet})}\ar[r] & {H^1(X_T,\O_{X_T}^{\times})} \ar[r]^{d_T\log} &{H^1(X_T,\omega_{X_T/T})}
	}\label{lext}
\end{equation}
that is clearly functorial in $T$.   

\begin{lemma}\label{identify}
For affine $T$, the exact sequence $(\ref{exactPic})$ is identified with the first three terms of $(\ref{lext})$ in a manner that is functorial in $T$.
\end{lemma}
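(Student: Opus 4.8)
The plan is to identify the three terms of \eqref{exactPic} with the first three terms of the hypercohomology sequence \eqref{lext}, compatibly with the maps between them. The two outer identifications are essentially formal: the left-hand terms of both sequences are literally $H^0(X_T,\omega_{X_T/T})$, while the right-hand term $P_{X/S}(T)$ of \eqref{exactPic} is by definition the group of isomorphism classes of line bundles on $X_T$, which is canonically $\Pic(X_T)=H^1(X_T,\O_{X_T}^{\times})$, the third term of \eqref{lext}. The crux is therefore to produce a natural group isomorphism $P^{\natural}_{X/S}(T)\cong \H^1(X_T,\omega_{X_T/T}^{\times,\bullet})$ carrying the inclusion of $H^0(X_T,\omega_{X_T/T})$ and the forgetful map \eqref{forget} to the edge maps of \eqref{lext}.

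To build this isomorphism I would argue via \v{C}ech theory. Given a pair $(\L,\nabla)$ representing a class in $P^{\natural}_{X/S}(T)$, choose a Zariski open cover $\{U_i\}$ of $X_T$ trivializing $\L$, with transition cocycle $g_{ij}\in \O_{X_T}^{\times}(U_{ij})$. In each local frame the regular connection (Definition \ref{regconn}) takes the shape $\nabla=d_T+\omega_i$ for a unique $\omega_i\in\omega_{X_T/T}(U_i)$, and the Leibnitz rule forces the compatibility $\omega_i-\omega_j=d_T\log g_{ij}$ on $U_{ij}$. Thus $(\{g_{ij}\},\{\omega_i\})$ is precisely a degree-one cocycle for the total complex of the \v{C}ech bicomplex attached to $\omega_{X_T/T}^{\times,\bullet}=[\O_{X_T}^{\times}\xrightarrow{d_T\log}\omega_{X_T/T}]$: the cocycle condition on $\{g_{ij}\}$ is the line-bundle gluing condition, and the mixed condition is the displayed compatibility. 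Replacing the trivializations by units $h_i\in\O_{X_T}^{\times}(U_i)$ --- equivalently, giving an isomorphism of pairs --- alters this datum by the total coboundary $(\{h_ih_j^{-1}\},\{d_T\log h_i\})$, so isomorphism classes of pairs trivialized by $\{U_i\}$ correspond bijectively to $\check{\H}^1$ of the cover; conversely any such hypercocycle manufactures a genuine line bundle $\L$ together with local connection forms that glue, by the mixed condition, to a regular connection. Passing to the limit over covers and using that \v{C}ech and derived-functor (hyper)cohomology agree in degree one yields $P^{\natural}_{X/S}(T)\cong\H^1(X_T,\omega_{X_T/T}^{\times,\bullet})$; it is a homomorphism because the tensor product of pairs corresponds to multiplying transition functions and adding connection forms, which is exactly the group law on $\H^1$.

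It then remains to match the maps and to check functoriality in $T$. A global section $\eta\in H^0(X_T,\omega_{X_T/T})$ corresponds to the pair $(\O_{X_T},d_T+\eta)$, whose hypercocycle is $(\{1\},\{\eta|_{U_i}\})$ --- precisely the image of $\eta$ under the edge map $H^0\to\H^1$ of \eqref{lext} --- and the forgetful map \eqref{forget}, $(\L,\nabla)\mapsto\L$, corresponds to the projection $(\{g_{ij}\},\{\omega_i\})\mapsto\{g_{ij}\}$, i.e. the map $\H^1(X_T,\omega_{X_T/T}^{\times,\bullet})\to H^1(X_T,\O_{X_T}^{\times})$ of \eqref{lext}. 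Functoriality in $T$ is immediate once one recalls that the complex $\omega_{X_T/T}^{\times,\bullet}$ and the notion of regular connection are both compatible with base change, so that pullback of hypercocycles matches pullback of pairs. I expect the main obstacle to be the bookkeeping in the middle identification: making the dictionary between the local data of $(\L,\nabla)$ and total \v{C}ech hypercocycles precise (with the correct signs in the total differential), verifying that isomorphisms of pairs are exactly hypercoboundaries, and invoking the degree-one comparison of \v{C}ech and derived hypercohomology for the non-quasi-coherent complex $\omega_{X_T/T}^{\times,\bullet}$. The restriction to affine $T$ is convenient here --- it permits computing with a fixed finite affine cover of $X_T$ and keeps $P_{X/S}(T)=\Pic(X_T)$ transparent --- and it is the only case needed to pin down the fppf sheafifications in Theorem \ref{rep}.
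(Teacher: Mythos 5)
Your proposal is correct and follows essentially the same route as the paper's proof: the \v{C}ech-theoretic dictionary sending $(\L,\nabla)$ with trivializing cover $\{U_i\}$ to the hypercocycle $(\{g_{ij}\},\{\omega_i\})$ with $\omega_i-\omega_j=d_T\log g_{ij}$, the observation that isomorphisms of pairs correspond to hypercoboundaries, the identification of the edge maps, and the degree-one comparison of \v{C}ech with derived-functor hypercohomology (the paper cites \cite[$0_{\mathrm{\Rmnum{3}}}$, \S 12.4]{EGA} for its $\delta$-functoriality in degrees $0$ and $1$). The only cosmetic difference is that the paper phrases the well-definedness check in terms of passing to common refinements of covers rather than fixing a single cover, but the content is identical.
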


\begin{proof}
	Recall ({\em cf.} \cite[$0_{\mathrm{\Rmnum{3}}}$, \S 12.4]{EGA})
	that there is a natural identification of derived-functor (hyper) cohomology with \v{C}ech (hyper)cohomology which is $\delta$-functorial
	in degrees 0 and 1.  We thus have a natural identification of (\ref{lext}) with the corresponding exact sequence of \v{C}ech (hyper)cohomology
	groups, so it suffices to interpret (\ref{exactPic}) \v{C}ech-theoretically in a manner that is natural in $T$.
	
	For $(\L,\nabla)$ representing a class in $P_{X/S}^{\natural}(T)$, let $\{U_i\}$ be a Zariski open cover of $X_T$ that trivializes $\L$, and denote 
	by $f_{ij}\in \Gamma(U_i\cap U_j,\O_{X_T}^{\times})$ the transition functions.  Because of the Leibnitz rule, $\nabla\big|_{U_i}$ is determined by a 
	  unique ``connection form" $\omega_i\in \Gamma(U_i,\omega_{X_T/T})$, and the relation $$\omega_i-\omega_j = f_{ij}^{-1}\cdot d_T f_{ij}$$ holds
	on $U_i\cap U_j$.  We thus obtain a \v{C}ech 1-hyper cocycle for the complex $\omega_{X_T/T}^{\times,\bullet}$
	$$(\{f_{ij}\},\{\omega_{i}\})\in C^1(\{U_i\},\omega^{\times,\bullet}_{X_T/T}):=C^1(\{U_{i}\cap U_j\},\O_{X_T}^{\times})\oplus  C^0(\{U_i\},\omega_{X_T/T}).$$
	It is straightforward to check that any two such trivializations over open covers $\{U_i\}$ and $\{V_j\}$ yield hyper 1-cocycles which differ
	by a hyper coboundary when viewed as hyper 1-cocycles for the common refining open cover $\{U_i\cap V_j\}$, and likewise that two different 
	representatives of the same isomorphism class in $P_{X/S}^{\natural}(T)$ yield hyper 1-cocycles that differ by a hyper-coboundary (after passing
	to a common refining cover of the associated cocycles). 
	We therefore obtain a well-defined \v{C}ech hyper-cohomology class.  This procedure is easily reversed, and so we have a bijection 
	$$P^{\natural}_{X/S}(T)\simeq \check{\H}^1(X_T,\omega_{X_T/T}^{\times,\bullet}).$$
	To check that this is in fact a homomorphism of abelian groups that is functorial in $T$ is straightforward (albeit tedious).
	
	We identify $P_{X/S}(T)$ with $\check{H}^1(X_T,\O_{X_T}^{\times})$ in the usual way, by sending a class represented by $\L$ to the 1-cocycle 
	$\{f_{ij}\}$ given by the transition functions associated to a trivializing open cover $\{U_i\}$ and choice of trivializations of $\L\big|_{U_i}$.
	Similarly, we use the natural isomorphism of $H^0(X_T,\omega_{X_T/T})$  with $\check{H}^0(X_T,\omega_{X_T/T})$, and we thus obtain
	a functorial diagram of homomorphisms of abelian groups  
	\begin{equation*}
		\xymatrix{
			0\ar[r] & {H^0(X_T,\omega_{X_T/T})} \ar[r]\ar[d]^-{\simeq} & {P^{\natural}_{X/S}(T)}\ar[r]\ar[d]^-{\simeq} & {P_{X/S}(T)}\ar[d]^-{\simeq}\\
			0\ar[r] & {\check{H}^0(X_T,\omega_{X_T/T})}\ar[r] & {\check{\H}^1(X_T,\omega_{X_T/T}^{\times,\bullet})}\ar[r] & {\check{H}^1(X_T,\O_{X_T}^{\times})}
		}
	\end{equation*}
	That this diagram commutes is easily verified by appealing to the explicit descriptions of the maps involved.

\end{proof}

By Raynaud's ``crit\`ere de platitude cohomologique" \cite[Th\'eor\`eme 7.2.1]{RaynaudPic}, our hypotheses $X$ ensure that $f$ is cohomologically flat in dimension zero.
Thus,  due to Corollary \ref{locfreeBC} and the fact that the formation of $\omega_{X/S}$ commutes with any base change on $S$,
for each $i\ge 0$ the fppf sheaf associated to functor on $S$-schemes
\begin{equation*}
	T\rightsquigarrow H^i(X_T,\omega_{X_T/T})
\end{equation*}
is represented by the vector group $R^if_*\omega_{X/S}$.  By Lemma \ref{identify}, we therefore have an exact sequence of fppf sheaves
of abelian groups on the category of $S$-schemes whose first and last (nonzero) terms are smooth affine $S$-groups:
\begin{equation}
	\xymatrix{
		0\ar[r] & {f_*\omega_{X/S}}\ar[r] & {\Pic^{\natural}_{X/S}}\ar[r] & {\Pic_{X/S}} \ar[r] & {R^1f_*\omega_{X/S}}
	}\label{close}
\end{equation}

With (\ref{close}) at hand, we can now prove Theorem \ref{rep}:

\begin{proof}[Proof of Theorem \ref{rep}]
	Consider the identity component $\Pic^0_{X/S}$ and the composition of its inclusion into $\Pic_{X/S}$ with the map
	of fppf sheaves $\Pic_{X/S}\rightarrow R^1f_*\omega_{X/S}$.  We claim that this composite is the zero map.  Indeed,
	by Exemples 6.1.6 and Th\'eor\`eme 8.2.1 of \cite{RaynaudPic} (or \S9.4, Theorem 2 of \cite{BLR}) and \S 8.4, Proposition 2 of \cite{BLR}, 
	  our hypotheses on $X$ ensure that $\Pic^0_{X/S}$ 
	is a smooth and separated $S$-scheme, so the composite map 
	\begin{equation}
		\xymatrix{
			{\Pic^0_{X/S}}\ar[r] & {R^1f_*\omega_{X/S}}
			}\label{Sgpmap}
	\end{equation}
	is a map of $S$-group schemes.
	Since the generic fiber of $\Pic^0_{X/S}$ is an abelian variety and $R^1f_*\omega_{X/S}$ is affine over $S$,
	the closed kernel of (\ref{Sgpmap}) contains the generic fiber, and hence
	(\ref{Sgpmap}) is the zero map.  Thus, the inclusion 
	$\Pic^0_{X/S}\rightarrow \Pic_{X/S}$ factors through the image of (\ref{forget}).
	By pullback, we obtain a short exact sequence (\ref{gpseq}) of fppf abelian sheaves on the category of $S$-schemes.
	As we have observed, the leftmost term in (\ref{gpseq}) is a vector group (in particular it is a smooth and affine $S$-group), and the rightmost term
	is a smooth and separated $S$-group scheme.  It follows from this by fppf descent, as in the proof of Proposition \ref{repExtrig}, that
	$\Pic^{\natural,0}_{X/S}$ is a smooth and separated $S$-group scheme, and that we have a short exact sequence (\ref{gpseq}) of smooth
	and separated group schemes over $S$.
\end{proof}

\section{Proof of the main Theorem}\label{mainthmpf}

In this section, we prove Theorem \ref{main}, following the outline sketched in the introduction (in particular,
we will keep our notation from that section).
Throughout this section, we fix a proper relative curve $f:X\rightarrow S$ over $S=\Spec R$
which we suppose satisfies the hypotheses of Theorem \ref{main}.
Note that these hypotheses ensure that Grothendieck's pairing on component groups for $J_K$
is perfect, by Proposition \ref{GrPerfJ}. 
In particular, there is a natural short exact sequence of smooth $S$-groups:
\begin{equation*}
	\xymatrix{
			0\ar[r] & {\omega_J}\ar[r] & {\scrExtrig_S(J,\Gm)} \ar[r] & {\Dual{J}^0} \ar[r] & 0
			}.
\end{equation*}

We begin our proof of Theorem \ref{main} by 
constructing a canonical map of short exact sequences of smooth $S$-group schemes
\begin{equation}
	\xymatrix{
		0\ar[r] & {\omega_J} \ar[d]\ar[r] & {\scrExtrig_S(J,\Gm)} \ar[r]\ar[d] & {\Dual{J}^0} \ar[r]\ar[d] & 0\\
			0\ar[r] & {f_*\omega_{X/S}} \ar[r] & {\Pic^{\natural,0}_{X/S}} \ar[r] & {\Pic^0_{X/S}}\ar[r] & 0
	}\label{can}
\end{equation}
which we do in 3 steps. 
	
\medskip\noindent
{\bf Step 1:}  We initially suppose that there exists a rational point $x\in X_K(K)$ and will later explain how to reduce the general case to this one.
Associated to $x$ is the usual Albanese mapping ${j_{x,K}}: X_K\rightarrow J_K$ given by the functorial recipe ``$y\mapsto \O(y)\otimes \O(x)^{-1}$."
Letting $i:X^{\sm}\hookrightarrow X$ denote the $S$-smooth locus of $f:X\rightarrow S$ in $X$, we denote by 
$j_x:X^{\sm}\rightarrow J$ the morphism obtained from 
$j_{x,K}$ by the N\'eron mapping property of $J$.  By abuse of notation, we will also write $j_x$ for any base change of $j_x$.	
For each smooth and affine $S$-scheme $T$, we will show that ``pullback along $j_x$" yields a commutative diagram of exact sequences
of abelian groups
\begin{equation}
	\xymatrix{
		0\ar[r] & {\Gamma(T,\omega_{J_T})} \ar[r]\ar[d] & {\Extrig_T(J_T,\Gm)} \ar[r]\ar[d] & {\Ext_T(J_T,\Gm)} \ar[r]\ar[d] & 0\\
		0\ar[r] & {\Gamma(X_T,\omega_{X_T/T})} \ar[r] & {P_{X/S}^{\natural}(T)} \ar[r] & {P_{X/S}(T)}
	}\label{step1}
\end{equation}
that is functorial in $T$. 
To do this, we will need to apply the following Lemma with $U=X^{\sm}$; that this choice of $U$ satisfies
the hypotheses of the Lemma crucially uses our hypothesis that the closed fiber of $X$ is generically smooth.
\begin{lemma}\label{reduce2U}
	Let $U$ be any open subscheme of $X$ whose complement in $X$ consists of points of codimension at least 2.
	For each smooth $S$-scheme $T$, pushforward along $i_T:U_T\rightarrow X_T$ yields a natural isomorphism 
	of short exact sequences
	of abelian groups
	\begin{equation*}
		\xymatrix{
			0\ar[r] & {\Gamma(U_T,\Omega^1_{U_T/T})} \ar[r]\ar[d]^-{\simeq} & {P_{U/S}^{\natural}(T)}\ar[r]\ar[d]^-{\simeq} & {P_{U/S}(T)}\ar[d]^-{\simeq}\\
			0\ar[r] & {\Gamma(X_T,\omega_{X_T/T})} \ar[r] & {P_{X/S}^{\natural}(T)}\ar[r] & {P_{X/S}(T)}
			}.
	\end{equation*}
\end{lemma}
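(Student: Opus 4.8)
The plan is to reduce the entire statement to the behaviour of coherent sheaves across a closed subset of codimension at least two in a normal scheme, to establish the two outer vertical isomorphisms at the level of sheaves, and then to deduce the middle one by extending connections. Throughout I write $i_T : U_T \hookrightarrow X_T$ and $Z_T$ for the preimage of $X\setminus U$.

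First I would record the standard permanence properties under the base change $p : X_T \to X$. Since $T\to S$ is smooth, $p$ is smooth, so $X_T$ is again normal and Cohen--Macaulay, the closed subset $Z_T$ still has codimension $\ge 2$ in $X_T$, and by the base-change compatibility of the relative dualizing sheaf recalled before Proposition \ref{diffmap} one has $\omega_{X_T/T}=p^*\omega_{X/S}$, while $\Omega^1_{U_T/T}=\omega_{U_T/T}$ on the $S$-smooth $U$. The two extension facts I need are
\begin{equation*}
	i_{T*}\O_{U_T}=\O_{X_T} \qquad\text{and}\qquad i_{T*}\,\omega_{U_T/T}=\omega_{X_T/T}.
\end{equation*}
The first is algebraic Hartogs for the normal (hence $S_2$) scheme $X_T$. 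The second is the base-$T$ analogue of Lemma \ref{dualizing}, whose proof applies verbatim: the essential point is that $\omega_{X_T/T}$, being a relative dualizing sheaf of a Cohen--Macaulay morphism with Cohen--Macaulay total space, satisfies Serre's condition $S_2$, hence has depth $\ge 2$ along $Z_T$ and the Hartogs extension property. Taking global sections of the second identity yields the left-hand vertical isomorphism $\Gamma(U_T,\Omega^1_{U_T/T})\xrightarrow{\sim}\Gamma(X_T,\omega_{X_T/T})$.

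Next I would treat the two rows and the right-hand column. Both rows are instances of the left-exact sequence constructed in the run-up to (\ref{exactPic}): the forgetful map has kernel the connections on the trivial bundle modulo $d_T\log$ of units, and since (by the first identity above) global functions and units agree on $U_T$ and $X_T$, this kernel is $\Gamma(\Omega^1_{U_T/T})$ for $U$ and $\Gamma(\omega_{X_T/T})$ for $X$; the squares then commute by naturality of $i_{T*}$. For the right-hand column, the candidate inverse to restriction is $\M\mapsto i_{T*}\M$: one has $(i_{T*}\M)|_{U_T}=\M$, and for a line bundle $\L$ on $X_T$ the $S_2$ property gives $i_{T*}(\L|_{U_T})=\L$.

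The one point that is not formal is that $i_{T*}\M$ is again \emph{locally free}, since a priori it is only a reflexive (so divisorial) rank-one sheaf; \textbf{this is the main obstacle}. I would handle it by showing $X_T$ is locally factorial at the points of $Z_T$, so that a reflexive rank-one sheaf is invertible there: when $X$ is regular this is automatic, as $X_T\to X$ smooth forces $X_T$ regular and hence factorial, and this is precisely where the hypothesis on $X$ enters. Granting local factoriality, the right-hand column is an isomorphism, and the middle column follows cleanly: given $(\M,\nabla)$ on $U_T$, set $\L:=i_{T*}\M$ and push the connection forward, using the projection formula together with $i_{T*}\Omega^1_{U_T/T}=\omega_{X_T/T}$, to obtain $i_{T*}\nabla:\L\to\L\otimes\omega_{X_T/T}$; the Leibniz rule with respect to the derivation $d_T$ of \S\ref{dualizingsec} persists, so this is a regular connection in the sense of Definition \ref{regconn}, and restriction to $U_T$ recovers $(\M,\nabla)$. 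A routine check of the cocycle descriptions shows all squares commute, and one concludes (directly, or by a diagram chase from the two outer isomorphisms) that the three vertical maps are isomorphisms of the displayed left-exact sequences.
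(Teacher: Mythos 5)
Your proposal and the paper's proof share the same two extension facts ($i_{T*}\O_{U_T}=\O_{X_T}$ and $i_{T*}\omega_{U_T/T}=\omega_{X_T/T}$; the paper gets the latter from Lemma \ref{dualizing} by flat base change along the smooth projection $X_T\rightarrow X$, rather than by rerunning the depth argument over $T$ as you do), but beyond that the arguments genuinely diverge, and the divergence is the heart of the matter. The paper's entire proof consists of showing that for a pair $(\L,\nabla)$ on $X_T$ the horizontal arrows of (\ref{restr}) are isomorphisms. That establishes that restriction to $U_T$ is injective on isomorphism classes and that pushforward retracts it; it says nothing about whether $i_{T*}\M$ of an \emph{arbitrary} line bundle with connection $\M$ on $U_T$ is again a line bundle, i.e. nothing about surjectivity of restriction --- which is exactly the direction in which the Lemma is used in Step 1 of \S\ref{mainthmpf}, where objects are built on $X^{\sm}$ and must be transported to $X$. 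You isolate precisely this point as ``the main obstacle'': $i_{T*}\M$ is a priori only rank-one reflexive, and you make it invertible by local factoriality of $X_T$, available because $X$ regular forces $X_T$ regular, hence factorial by Auslander--Buchsbaum. The remaining steps of your argument (projection formula, persistence of the Leibnitz rule, identification of the rows using that units on $U_T$ and $X_T$ come from $T$) are routine and correct, so your proof is complete --- and strictly more complete than the paper's --- whenever $X$ is regular.

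The caveat is that your factoriality argument covers only the branch ``$X$ regular'' of the standing hypotheses of \S\ref{mainthmpf}, which also allow $X$ merely normal when $k$ is perfect; as a proof of the Lemma in its stated generality your argument therefore leaves that case open. But this limitation is essential, not a defect: in the non-regular branch the Lemma is false, and the paper's own proof fails at exactly the step you flagged. Take $R=\Z_p$ and let $X\subset\P^2_R$ be the conic bundle $uv=p^2w^2$ in homogeneous coordinates $[u:v:w]$: it is proper, flat and normal, its closed fiber is the two reduced lines $\{u=p=0\}\cup\{v=p=0\}$, and it has a single (rational) $A_1$ singularity at $P=[0:0:1]$, so the hypotheses of Theorem \ref{main} hold with $k$ perfect although $X$ is not regular. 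Put $U=X^{\sm}=\{u\neq 0\}\cup\{v\neq 0\}$ and let $D=\{u=p=0\}$. On this two-set cover the ideal sheaf $\I_D|_U$ is trivialized by $1$ and by $p$, so its transition function is a power of $p$; since $dp=0$ in $\Omega^1_{X/S}$, declaring both generators horizontal defines a connection $\nabla$ on $\I_D|_U$, giving a class in $P^{\natural}_{U/S}(S)$. Yet $\I_D|_U$ extends to no line bundle on $X$: any invertible extension $\L$ would satisfy $\L\simeq i_*i^*\L\simeq i_*(\I_D|_U)=\I_D$ by the paper's own unit-map isomorphism, while $\I_{D,P}=(x,p)\subset\Z_p[x,y]_{(x,y,p)}/(xy-p^2)$ is not principal ($x,y,p$ are independent in $\m/\m^2$). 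So pushforward is not even well defined as a map to $P^{\natural}_{X/S}(S)$, and restriction is not surjective. In short, local factoriality of $X$ along $X\setminus U$ is a necessary hypothesis; your proof is the correct one in the regime where the Lemma holds (note the paper's abstract, unlike Theorem \ref{main}, indeed restricts to regular models), and what your extra care has actually exposed is a genuine gap in the paper's treatment of the non-regular case.
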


\begin{proof}
To minimize notation, we will simply write $i$ for $i_T$.
Since the dualizing sheaf is compatible with \'etale localization,
it suffices to show that for any pair $(\L,\nabla)$ consisting of a line $\L$ bundle on $X_T$ with 
regular connection $\nabla$ over $T$,
the canonical commutative diagram 
\begin{equation}
	\xymatrix{
		{\L} \ar[d]_-{\nabla} \ar[r] & { i_*i^*\L} \ar[d]^-{i_* i^*(\nabla)}\\
		{ \L \otimes\omega_{X_T/T}} \ar[r] & {i_*i^*\L\otimes i_*i^*\omega_{X_T/T}}
	}\label{restr}
\end{equation}
has horizontal arrows that are isomorphisms.  By hypothesis, $X$ is normal and the complement of $U$ in $X$ consists of points of codimension at least two.  
Since $T\rightarrow S$ is smooth, the base change $X_T$ is also normal and the complement of $U_T$ in $X_T$ has codimension
at least 2 (see part (ii) of the Corollary to Theorem 23.9 and Theorem 15.1 in \cite{matsumura}).
As $\L$ is locally free, it follows that the top horizontal map of (\ref{restr}) is an isomorphism.
By Lemma \ref{dualizing} the canonical map $\omega_{X/S}\rightarrow i_*i^*\omega_{X/S}$ is an isomorphism; 
since this map and the sheaves in question are compatible with base change, we conclude that the bottom horizontal 
arrow in (\ref{restr}) is also an isomorphism.
\end{proof}

\begin{remark}\label{reduce2Ufalse}
	Note that Lemma \ref{reduce2U} is generally false if the complement of $U$ in $X$ has codimension
	strictly less than 2.
\end{remark}

We deduce from Lemma \ref{reduce2U} applied to $U:=X^{\sm}$ that it suffices to construct (\ref{step1}) with $X$ 
replaced by $U$ in the bottom row.
Note that since $U_T$ is $T$-smooth, the notions of regular connection and connection coincide (see Proposition \ref{diffmap}).
Thus, we wish to associate to any element of $\Extrig(J_T,\Gm)$ an invertible sheaf on $U_T$ with connection over $T$ in
a manner that is Zariski-local on (and functorial in) $T$, and so globalizes from the case of affine $T$.  To do this, 
we proceed as follows.

Fix a choice $\tau$ of generator for $\omega_{\Gm}$ and (functorially) identify $\Extrig_T(J_T,\Gm)$ with $\mathbf{E}_{\tau}(J_T)(T)$ via Lemma \ref{extrigaltdesc}.
Let $(E,\eta)$ be a representative of a class in $\mathbf{E}_{\tau}(J_T)(T)$.  Viewing $E$ as
a ${\Gm}$-torsor over $J_T$, we choose a Zariski open cover $\{V_i\}$ of $J_T$ and 
local sections $s_i:V_i\rightarrow E$ to the projection $E\rightarrow J_T$ that trivialize $E$ over $V_i$.  
Set $\omega_i:=s_i^*\eta\in \Gamma(V_i,\Omega^1_{J_T/T})$ and let $\L$ be the invertible sheaf on $J_T$
corresponding to the $\Gm$-torsor $E$.  There are two canonical ways to associate transition
functions to $\L$ and the sections $s_i$ depending on whether we consider the section $s_i-s_j: V_i\cap V_j\rightarrow \Gm$ 
or its inverse $s_j-s_i$.  However, since any two choices of $\tau$ differ by multiplication by $\pm 1$,
there is a {\em unique} choice $f_{ij}:V_i\cap V_j\rightarrow \Gm$ with the property
that $f_{ij}^*\tau = f_{ij}^{-1} d f_{ij}$ (interpreting $f_{ij}$ as a section of $\Gm$ over $V_i\cap V_j$), and we consistently make this choice of transition function.

Define
\begin{equation*} 
	\xymatrix{
		{\nabla_i: \L\big|_{V_i}} \ar[r] & {\L\big|_{V_i} \otimes_{\O_{V_i}} \Omega^1_{V_i/T}}
	}
\end{equation*}
by $\nabla_i(t s_i):= ts_i\otimes \omega_i + s_i\otimes dt$ for any section $t$ of $\O_{V_i}$. 
Using the definition of $\omega_i$ and the fact that $\eta$ pulls back to $\tau$ on $\Gm$, it is straightforward to check that 
$$\omega_i-\omega_j =f_{ij}^*\tau= f_{ij}^{-1} df_{ij}$$
(by our choice of $f_{ij}$)
in $\Gamma(V_i\cap V_j,\Omega^1_{J_T/T})$ and hence that the $\nabla_i$ 
uniquely glue to give a connection $\nabla$ on $\L$ over $T$.  By passing to a common refining open cover, one checks that any other choice of 
trivialization $(V'_{i'},s'_{i'})$ yields the same connection on $\L$, so the pair $(\L,\nabla)$ is independent of our choices of cover $\{V_i\}$ and sections $\{s_i\}$

By pullback along $j_x:U_T\rightarrow J_T$, we thus obtain a line bundle on $U_T$ with a connection.  
If $(E',\eta')$ is another choice of representative for the same class in $\mathbf{E}_{\tau}(J_T)(T)$ then
by definition there is an isomorphism of extensions $\varphi:E\rightarrow E'$
with the property that $\varphi^*\eta'=\eta$.  One easily checks that $\varphi$ induces an isomorphism
of the invertible sheaves on $J_T$ with connection corresponding to $(E,\eta)$ and to $(E',\eta')$, and hence   
that we have a well-defined map of sets ${\mathbf{E}_{\tau}(J_T)(T)} \rightarrow  {P_{U/S}^{\natural}(T)}$
which is readily seen to be functorial in $T$.  
That this map is in fact a homomorphism of abelian groups follows easily from the definition using the description of the group law
on $\mathbf{E}_{\tau}(J_T)(T)$ as in \S\ref{canext}.    By Lemma \ref{extrigaltdesc}, we  thus obtain  a homomorphism of abelian groups
\begin{equation}
	\xymatrix{
		{\Extrig_T(J_T,\Gm)} \ar[r] &  {P_{U/S}^{\natural}(T)}
	}\label{extrigtoP}
\end{equation}
that is functorial in $T$.  It is straightforward to check that this map is moreover independent of our initial choice of $\tau$ (but may {\em a priori}
depend on our choice of rational point $x$) and so provides the desired functorial map.

We similarly define $\Ext_T(J_T,\Gm)\rightarrow P_{U/S}(T)$ by associating to an extension $E$ of $J_T$ by $\Gm$
the pull back along $j_x:U_T\rightarrow J_T$ of the line bundle $\L$ on $J_T$ obtained by viewing $E$ as a $\Gm$-torsor
over $J_T$.  This is readily seen to be a homomorphism of abelian groups (using Baer sum on $\Ext_T(J_T,\Gm)$) and is obviously functorial in $T$.

Finally, we define $\Gamma(T,\omega_{J_T})\rightarrow \Gamma(U_T,\Omega^1_{U_T/T})$ as follows.
By \cite[\S4.1, Proposition 1]{BLR}, any global section $\omega_0$ of $\omega_{J_T}= e_T^*\Omega^1_{J_T/T}$ can be uniquely propagated
to an invariant differential form $\omega$ on $J_T$ over $T$ satisfying
$e_T^*\omega=\omega_0$.  Pulling $\omega$ back along $j_x:U_T\rightarrow J_T$, we obtain a section
of $\Omega^1_{U_T/T}$ over $U_T$.  This association is clearly a homomorphism and functorial in $T$. 

We thus obtain (via Lemma \ref{reduce2U}) a diagram (\ref{step1}) with all maps homomorphsms of abelian groups, functorially in smooth affine $S$-schemes $T$.  
That this diagram commutes follows immediately from the explicit definition of all the maps involved (morally, each vertical map is simply ``pullback by $j_x$").

\medskip\noindent
{\bf Step 2:}  Passing from (\ref{step1}) to the corresponding diagram of associated fppf sheaves and recalling the
construction of the exact sequence of fppf sheaves (\ref{close}) in \S \ref{dualizingsec}, we obtain a commutative diagram
of fppf sheaves of abelian groups 
\begin{equation*}
		\xymatrix{
			0 \ar[r] & {\omega_{J}} \ar[r]\ar[d] & {\scrExtrig_S(J,\Gm)} \ar[r]\ar[d] & {\scrExt_S(J,\Gm)} \ar[r]\ar[d] & 0\\
			0 \ar[r] & {f_*\omega_{X/S}} \ar[r] & {\Pic^{\natural}_{X/S}} \ar[r] & {\Pic_{X/S}}
		}.
\end{equation*}
From Lemma \ref{GrPerfJ}, we thus
deduce the following commutative diagram of fppf abelian sheaves on smooth $S$-schemes with each term in the top row a smooth $S$-group:
\begin{equation*}
		\xymatrix{
			0 \ar[r] & {\omega_{J}} \ar[r]\ar[d] & {\scrExtrig_S(J,\Gm)} \ar[r]\ar[d] & {\Dual{J}^0} \ar[r]\ar[d] & 0\\
			0 \ar[r] & {f_*\omega_{X/S}} \ar[r] & {\Pic^{\natural}_{X/S}} \ar[r] & {\Pic_{X/S}}
		}
\end{equation*}
Since the map $\Dual{J}^0\rightarrow \Pic_{X/S}$ is homomorphism of group functors (on smooth $S$-schemes) and $\Dual{J}^0$ has connected fibers, 
for topological reasons this map necessarily factors through the open subfunctor
$\Pic^0_{X/S}$ (thinking of $\Pic^0_{X/S}$ as the union of all identity components of the fibers of $\Pic_{X/S}$; {\em cf}. \cite[$\mathrm{\Rmnum{4}}_3$, 15.6.5]{EGA} and arguing
fiber-by-fiber).  By the definition of $\Pic^{\natural,0}_{X/S}$ (Definition \ref{natiden}) as a fiber product, we thus have a commutative diagram
of fppf abelian sheaves on smooth $S$-schemes
\begin{equation}
		\xymatrix{
			0 \ar[r] & {\omega_{J}} \ar[r]\ar[d] & {\scrExtrig_S(J,\Gm)} \ar[r]\ar[d] & {\Dual{J}^0} \ar[r]\ar[d] & 0\\
			0 \ar[r] & {f_*\omega_{X/S}} \ar[r] & {\Pic^{\natural,0}_{X/S}} \ar[r] & {\Pic^0_{X/S}}\ar[r] & 0
		}\label{almost}
\end{equation}

\medskip\noindent
{\bf Step 3:}  By Proposition \ref{repExtrig} and Theorem \ref{rep}, both rows of (\ref{almost}) are short exact sequences of smooth
$S$-group schemes, and we claim that the comutative diagram (\ref{almost}) of fppf sheaves on smooth $S$-schemes can be enhanced to a corresponding commutative diagram of maps between smooth group schemes over $S$.
Indeed, this follows from Yoneda's lemma, which ensures that the natural ``restriction to the smooth site" map
$$\Hom_S(\F,\G)\rightarrow \Hom_{S_{\sm}}(\F,\G)$$
is bijective for any fppf abelian sheaves $\F$, $\G$ on $S$-schemes with $\F$ represented by a smooth $S$-group scheme.

We have thus constructed (\ref{can}) using our initial choice of rational point $x\in X_K(K)$.  If $x'$ is any other choice of rational point,
we claim that the resulting maps (\ref{can}) obtained from $x$ and $x'$ coincide.  Since $j_{x,K},j_{x',K}: X_K\rightarrow J_K$ differ
by a translation  on $J_K$, it is enough to show that for any translation $\tau:J_K\rightarrow J_K$, the induced map
\begin{equation}
	\xymatrix{
		0\ar[r] & {\omega_J}\ar[r]\ar[d]^-{\varphi_1} & {\scrExtrig_S(J,\Gm)}\ar[r]\ar[d]^-{\varphi_2} & {\Dual{J}^0} \ar[r]\ar[d]^-{\varphi_3}& 0\\
		0\ar[r] & {\omega_J}\ar[r] & {\scrExtrig_S(J,\Gm)}\ar[r] & {\Dual{J}^0} \ar[r] & 0
	}\label{transmap}
\end{equation}
(using the N\'eron mapping property) is the identity map.  Since each term in the bottom row is separated and each term in the top
row is flat, whether or not (\ref{transmap}) coincides with the identity map may be checked on generic fibers.
Now $\tau^*:\omega_{J_K}\rightarrow \omega_{J_K}$ is the identity map as $\omega_{J_K}$ is identified with the sheaf of (translation) 
invariant differentials \cite[\S4.2 Proposition 1]{BLR}.
That $\tau^*: \Dual{J}_K\rightarrow \Dual{J}_K$ is the identity is well-known, and follows from the fact that the line-bundles classified by
$\Dual{J}_K:=\Pic^0_{J_K/K}$ are translation invariant (or equivalently that the classes in $\scrExt_{K}(J_K,\Gm)$ are {\em primitive}).\footnote{More precisely, for any abelian variety $A$ over $K$ we have a homomorphism of group functors 
$$\phi:\Pic_{A/K}\rightarrow \mathbf{Hom}(A,\Pic^0_{A/K})$$ given functorially on $K$-schemes $T$ by sending a line bundle $\L$ on $A_T$
to the map $x\mapsto \tau_x^*\L\otimes \L^{-1}$ with $\tau_x$ translation by a $T$-point $x$ of $A_T$. 
Since $A$ and $\Pic^0_{A/K}$ are projective, Grothendieck's theory of Hom-schemes ensures
that $\mathbf{Hom}(A,\Pic^0_{A/K})$ is a finite-type $K$-scheme which we claim is \'etale.  
Working over $\Kbar$, our claim follows from the fact that there are no nonzero liftings to $K[\epsilon]$
of the zero map $A\rightarrow \Pic^0_{A/K}$ (due to \cite[Theorem 6.1]{GIT}), so the tangent space
of $\mathbf{Hom}(A,\Pic^0_{A/K})$ at the origin is zero.  Again passing to $\Kbar$, we conclude that
the group map $\phi$ maps connected components of $\Pic_{A/K}$ to individual points, so in particular
restricts to the zero map on the connected component of the identity $\Pic^0_{X/K}$.}
Thus $\varphi_1=\varphi_3=\id$ so on $K$-fibers, $\varphi_3-\id$ uniquely factors through a map $\Dual{J}_K \rightarrow \omega_{J_K}$
which takes the identity to the identity. 
As any map from an abelian variety to a vector group is constant, we conclude that $\varphi_3-\id$ is identically zero on $K$-fibers, and hence
that $\varphi_3=\id$ as well.  
Thus, the map (\ref{can}) which we have constructed is independent of the choice of rational point $x\in X_K(K)$.

In the general case when $X_K(K)$ may be empty, we proceed as follows.  Denote by $Y$ any one of the three
schemes occurring in the top row of (\ref{can}) and by $Z$ the corresponding scheme in the bottom row.
We first claim that we have a canonical map $Y_K\rightarrow Z_K$.  Indeed, $X_K$ has a $K'$-rational point for some
finite Galois extension $K'$ of $K$, and we may use this point to define a $K'$-map 
$Y_{K'}\rightarrow Z_{K'}$ as we have explained.  Since this map is independent of the choice of $K'$-rational point by what we have said above, via Galois descent we have a canonical $K$-map $\varphi_K:Y_K\rightarrow Z_K$ as claimed.

We now appeal to the following general Lemma: 

\begin{lemma}\label{fpqctrick}
	Fix an integral scheme $T$ with generic point $\eta$ and
	let $Y \rightarrow T$ and $Z\rightarrow T$ be any flat $T$-schemes, with $Z$ separated over $T$.
	Suppose given a map $\varphi_{\eta}: Y_{\eta}\rightarrow Z_{\eta}$. 
	Then there is at most one extension of $\varphi_{\eta}$
	to a $T$-map $\varphi:Y\rightarrow Z$, and $\varphi$ exists if and only if the schematic closure in $Y\times_T Z$
	of the graph of $\varphi_{\eta}$ maps isomorphically onto $Y$ by the first projection.
	In particular, $\varphi$ exists if and only if there is an fpqc morphism $T'\rightarrow T$
	and a map $\varphi':Y_{T'}\rightarrow Z_{T'}$ with  $\varphi'_{\eta'}=\varphi_{\eta'}$
	where $\eta'=T'\times_{T} \eta$.
\end{lemma}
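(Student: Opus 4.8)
The plan is to reduce everything to the single fact that, since $T$ is integral with generic point $\eta$, the generic fibre of any $T$-flat scheme is schematically dense in it. First I would record this density: for an affine piece $\Spec A\subseteq T$ (so $A$ is a domain with fraction field $\kappa(\eta)$) and an affine piece $\Spec B\subseteq Y$ over it, flatness of $Y$ means $B$ is a flat $A$-module, so tensoring the inclusion $A\hookrightarrow\kappa(\eta)$ with $B$ shows $B\hookrightarrow B\otimes_A\kappa(\eta)$ is injective; hence $Y_\eta\hookrightarrow Y$ is schematically dense. Uniqueness of the extension is then immediate: any two $T$-extensions $\varphi,\psi$ of $\varphi_\eta$ agree on the schematically dense $Y_\eta$, and since $Z$ is separated over $T$ the locus of agreement is a closed subscheme of $Y$ (the preimage of the diagonal in $Z\times_T Z$); containing a schematically dense subscheme, it is all of $Y$, so $\varphi=\psi$.

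For the graph criterion, write $\Gamma_\eta\subseteq Y_\eta\times_\eta Z_\eta$ for the graph of $\varphi_\eta$ (a closed subscheme because $Z_\eta$ is $\eta$-separated), viewed as a locally closed subscheme of $Y\times_T Z$, and let $\overline{\Gamma}$ be its schematic closure. If an extension $\varphi$ exists, its graph $\Gamma_\varphi\subseteq Y\times_T Z$ is a closed subscheme (again by separatedness of $Z$) that is $T$-flat and maps isomorphically onto $Y$ via $\pr_1$; by the density fact $\Gamma_\eta$ is schematically dense in $\Gamma_\varphi$, forcing $\Gamma_\varphi=\overline{\Gamma}$, so $\pr_1$ carries $\overline{\Gamma}$ isomorphically onto $Y$. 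Conversely, if $\pr_1\colon\overline{\Gamma}\to Y$ is an isomorphism, I would set $\varphi:=\pr_2\circ(\pr_1)^{-1}$; since localisation at $\eta$ is exact, the $\eta$-fibre of $\overline{\Gamma}$ is exactly $\Gamma_\eta$, whence $\varphi$ restricts to $\varphi_\eta$ over $\eta$, as required.

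Finally, for the fpqc statement one direction is trivial (take $T'=T$). For the converse, suppose given an fpqc morphism $T'\to T$ and a map $\varphi'\colon Y_{T'}\to Z_{T'}$ with $\varphi'_{\eta'}=\varphi_{\eta'}$. I would descend $\varphi'$ directly. Over $T'':=T'\times_T T'$ the two pullbacks $\pr_1^*\varphi'$ and $\pr_2^*\varphi'$ are $T''$-maps $Y_{T''}\to Z_{T''}$ that both restrict over the generic fibre $(Y_{T''})_\eta$ to the canonical base change of $\varphi_\eta$, because the two compositions $\eta''\to\eta'\to\eta$ coincide with the structural map $\eta''\to\eta$; hence they agree scheme-theoretically there. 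Now $Y_{T''}$ is flat over the integral base $T$ through the flat morphism $T''\to T$, so by the density fact the generic fibre is schematically dense in $Y_{T''}$, and as $Z_{T''}$ is $T''$-separated the scheme-theoretic agreement locus is closed, hence all of $Y_{T''}$; thus $\pr_1^*\varphi'=\pr_2^*\varphi'$. Equivalently, the graph $\Gamma_{\varphi'}\subseteq(Y\times_T Z)_{T'}$ is a closed subscheme carrying the tautological, cocycle-trivial descent datum; since closed immersions are affine this datum is effective, descending $\Gamma_{\varphi'}$ to a closed subscheme $W\subseteq Y\times_T Z$ on which $\pr_1$ becomes an isomorphism after the fpqc base change $T'\to T$ and is therefore an isomorphism. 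The resulting $\varphi:=\pr_2\circ(\pr_1)^{-1}$ extends $\varphi_\eta$ by the graph criterion. The main obstacle is exactly this cocycle verification: because $T''$ need not be integral the uniqueness clause does not apply over it, and the crux is to replace it by schematic density of the generic fibre of $Y_{T''}$ relative to the original integral base $T$.
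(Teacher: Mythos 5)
Your proof is correct, and while the uniqueness argument and the graph criterion are essentially the paper's own (schematic density of the $\eta$-fibre of a $T$-flat scheme, graphs closed by separatedness, the closure of $\Gamma_{\varphi_\eta}$ forced to equal $\Gamma_\varphi$), your treatment of the final fpqc statement takes a genuinely different route. The paper gets it almost for free from two observations: the formation of the schematic closure $\Gamma$ commutes with the flat base change $T'\rightarrow T$ (because $\eta\rightarrow T$ is quasi-compact and separated, so scheme-theoretic image commutes with flat pullback), and the property of $\pr_1\colon\Gamma\rightarrow Y$ being an isomorphism is fpqc-local on the base; thus the candidate descended object already exists over $T$, namely $\Gamma$ itself, and no cocycle condition or effectivity of descent is ever invoked. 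You instead run honest fpqc descent: you verify the cocycle condition $\pr_1^*\varphi'=\pr_2^*\varphi'$ over $T''=T'\times_T T'$ --- and your key observation that $T''$ need not be integral, so one must use schematic density of the $\eta$-fibre of the $T$-flat scheme $Y_{T''}$ relative to the \emph{original} base $T$, is exactly the right fix --- and then descend the graph as a closed subscheme via effectivity of descent for affine morphisms, concluding with fpqc-locality of \textquotedblleft isomorphism.\textquotedblright\ What each buys: the paper's argument is shorter and sidesteps descent effectivity entirely, at the cost of invoking base-change compatibility of scheme-theoretic images along $T'\rightarrow T$; yours needs that compatibility only for the localization $\eta\rightarrow T$ and in exchange re-proves, in this special case, that $T$-morphisms into a separated scheme form an fpqc sheaf. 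One cosmetic point: your closing phrase \textquotedblleft extends $\varphi_\eta$ by the graph criterion\textquotedblright\ is not quite the right citation, since you have not shown $W$ equals the schematic closure of $\Gamma_{\varphi_\eta}$; the clean finish is to note that $\varphi_{T'}=\varphi'$ by construction, so $(\varphi|_{Y_\eta})_{\eta'}=\varphi'_{\eta'}=(\varphi_\eta)_{\eta'}$, and faithfully flat descent over $\eta$ gives $\varphi|_{Y_\eta}=\varphi_\eta$.
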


\begin{proof}
	The proof of Lemma \ref{fpqctrick} proceeds via standard arguments with schematic closures of graphs; due to lack 
	of a reference, we sketch how this goes.  The uniqueness of an extension 
		is clear, as $T$ is integral, $Z$ is separated over $T$, and $Y$ is $T$-flat.
	For existence, we proceed as follows.  Let $\Gamma\subseteq Y\times_T Z$ be the schematic closure
	in $Y\times_T Z$ of the graph $\Gamma_{\varphi_{\eta}}\subseteq Y_{\eta}\times_{\eta} Z_{\eta}$ of $\varphi_{\eta}$,
	and note that $\Gamma_{\eta}=\Gamma_{\varphi_{\eta}}$ as $Z$ is $T$-separated.
	Now if the first projection $\Gamma\rightarrow Y$ is an isomorphism then it is clear
	that $\varphi_{\eta}$ extends to a $T$-morphism.
	Conversely, given $\varphi:Y\rightarrow Z$ extending $\varphi_{\eta}$ and denoting by $\Gamma_{\varphi}$
	the graph of $\varphi$, we claim that necessarily $\Gamma=\Gamma_{\varphi}$.  Indeed,
	the canonical closed immersion $\Gamma\rightarrow Y\times_T Z$
	factors through a closed immersion $\Gamma\rightarrow \Gamma_{\varphi}$ as $\Gamma_{\varphi}$ is closed
	in $Y\times_T Z$ (due to $T$-separatedness of $Z$) and contains $\Gamma_{\varphi_{\eta}}$.	
	Since the closed immersion
	$\Gamma\rightarrow \Gamma_{\varphi}$ is an isomorphism over $\eta$ 
	(using that $\Gamma_{\eta}\simeq \Gamma_{\varphi_{\eta}}$) it must be an isomorphism, since
	$\Gamma_{\varphi_\eta}$ is dense in $\Gamma_{\varphi}$ as $\Gamma_{\varphi}$ is $T$-flat
	and $T$ is integral.
 	We conclude that $\Gamma=\Gamma_{\varphi}$ maps isomorphically onto $Y$ via the first projection.
	Finally, whether or not the first projection $\Gamma\rightarrow Y$ is an isomorphism is insensitive
	to fpqc base change; since the formation of $\Gamma$ commutes with such base change (as 
	$\eta\rightarrow T$ is quasi-compact and separated), we deduce
	the last statement of the Lemma.
\end{proof}

Applying the Lemma with $T=S=\Spec(R)$, $Y$, $Z$ as above, and $T'=\Spec(R^{\sh})$ for $R^{\sh}$ a strict henselization of
$R$, we see that it remains to construct a $T'$-morphism $Y_{T'}\rightarrow Z_{T'}$ recovering the base change
of $\varphi_{K}$ to $K^{\sh}:=\Frac(R^{\sh})$ on generic fibers.  
Now $X$ has generically smooth closed fiber, so $X_K$ has a $K^{\sh}$-point.
As our hypotheses on $X$ are unaltered by base change along  local-\'etale
extensions of discrete valuation rings (such as $R\rightarrow R^{\sh}$)
and the formation of the top and bottom rows of (\ref{can}) commute with such base change
we may use this $K^{\sh}$-point as in the construction of (\ref{can}) to define 
the desired $T'$-map $Y_{T'}\rightarrow Z_{T'}$. We conclude that $\varphi_K$ uniquely extends to an $S$-map,
and thus we obtain (\ref{can}) over $S$ as desired.

Now that we have constructed the canonical map of short exact sequences of smooth $S$-groups (\ref{can}), we can 
show that it is
an isomorphism.  We reiterate here that only the {\em construction} of this map uses the hypothesis that the closed fiber
of $X$ is generically smooth; as we will see below, the proof that (\ref{can}) is an isomorphism requires only the weaker hypotheses of
Raynaud's Theorem \ref{Rthm}.

\begin{proof}[Proof of Theorem $\ref{main}$]
	  
	  By passing to a finite \'etale extension if necessary, we may assume that $X_K(K)$ is nonempty, and we select $x\in X_K(K)$
	and use it to define (\ref{can}).  Note that since $X$ is normal with generically smooth closed fiber, $X$ satisfies the hypotheses
	of Theorem \ref{Rthm}.
 
	Consider the composite mapping
	\begin{equation}
		\xymatrix{
			{\Pic^0_{X/S}} \ar[r] & {J^0} \ar[r]^-{\simeq} & {\Dual{J}^0}
		}\label{invmap}
	\end{equation}  
	where the first map is deduced via the N\'eron mapping property from the canonical identification $J_K=\Pic^0_{X_K/K}$
	and the second map is similarly obtained from the canonical principal polarization $J_K\rightarrow \Dual{J}_K$
	induced by the $\Theta$-divisor; {\em cf.} \cite[\S 6]{MilneJacobians}.  
	  We claim that the composite $\Dual{J}^0\rightarrow \Dual{J}^0$ 
	  of (\ref{invmap}) with the right vertical map of (\ref{can}) coincides with negation on $\Dual{J}^0$.  
	  Since $\Dual{J}^0$ is flat and separated, it suffices to check this claim on generic fibers, so we
	wish to show that the map $\Pic^0(j_{x,K}): \Dual{J}_K\rightarrow J_K$ is the negative of the inverse of
	the canoincal principal polarization $J_K\rightarrow \Dual{J}_K$.  This is the content of \cite[Lemma 6.9]{MilneJacobians}.
	It follows from Theorem \ref{Rthm} that the right vertical map of (\ref{can}) is an isomorphism if and only if $X$ has rational 
	singularities; in particular, this settles the ``only if" direction of Theorem \ref{main}.
	
	We henceforth suppose that $X$ has rational singularities and we wish to show that (\ref{can})
	is an isomorphism of exact sequences of smooth group schemes over $S$. 
	By Theorem \ref{Rthm} and our discussion, we know that the right vertical map of (\ref{can}) is an isomorphism,
	and we will ``bootstrap" Raynaud's theorem using duality; more precisely, we will show that the left vertical map
	in (\ref{can}) is dual to the map on Lie algebras obtained from (\ref{Rthmmap}) and must therefore be an isomorphism
	as well.
	 Indeed, consider the dual of the map on Lie algebras obtained from (\ref{Rthmmap}):
	 $$\xymatrix@1{{\scrLie(J^0)^{\dual}}\ar[r]^-{\simeq} & {\scrLie(\Pic^0_{X/S})^{\vee}}}.$$
	 For any commutative group functor $G$ over $S$ with representable fibers, the canonical inclusion $G^0\hookrightarrow G$
	 induces an isomorphism on Lie algebras \cite[Proposition 1.1 (d)]{LiuNeron}, so we obtain a natural isomorphism of $\O_S$-modules 
	 $\scrLie(J)^{\vee}\simeq \scrLie(\Pic_{X/S})^{\vee}$.  The canonical identification $R^1f_*\O_X\simeq \scrLie(\Pic_{X/S})$
	 (\cite[\S8.4, Theorem 1]{BLR} or \cite[Proposition 1.3 (b)]{LiuNeron}) then gives a natural isomorphism 
	 \begin{equation}
	 	\xymatrix{
			{\scrLie(J)^{\vee}} \ar[r]^-{\simeq} & {(R^1f_*\O_X)^{\vee}}
		}.\label{dualofRthm}
	 \end{equation}
	Using the canonical duality $\omega_J\simeq \scrLie(J)^{\vee}$ (see \cite[II, \S4.11]{SGA3vol1} or \cite[Proposition 1.1 (b)]{LiuNeron})
	and Grothendieck duality (Proposition \ref{GDexplicit}) yields a natural isomorphism of $\O_S$-modules
	\begin{equation}
		\xymatrix{
			{\omega_J} \ar[r]^-{\simeq} & {\scrLie(J)^{\dual} }\ar[r]^-{\simeq}_-{(\ref{dualofRthm})} & {(R^1f_*\O_X)^{\vee}} &  \ar[l]_-{\simeq}^-{(\ref{duality1})} { f_*\omega_{X/S}}		
		}\label{Rthmtrick}
	\end{equation}
    	and hence an isomorphism of the corresponding vector groups over $S$.  We claim that the left vertical map in (\ref{can}) coincides with the negative of
	(\ref{Rthmtrick}).  Since the source of both maps is flat and the target is separated over $S$, it suffices to check such agreement
	on generic fibers.  
	
	To do this, we consider the following diagram, in which we simply write $j$ for $j_{x,K}$
	and $\varphi:J_K\rightarrow \Dual{J}_K$ for the canonical principal polarization:
	\begin{equation}
			\xymatrix{
				{\Gamma(\Spec K,\omega_{J_K})}\ar[d]_-{\text{can}}^-{\simeq} \ar[r]_-{\Pic^0(j)^*}  & 
				{\Gamma(\Spec K,\omega_{\Dual{J}_K})} \ar[d]_-{\text{can}}^-{\simeq}
				\ar[r]_-{\text{ev}}^-{\simeq}  & {\Lie(\Dual{J}_K)^{\dual}} \ar[d]_-{\theta_{J_K}^{\dual}}^-{\simeq}   
				\ar[r]_-{\Lie(-\varphi)^{\dual}}^-{\simeq} & {\Lie(J_K)^{\dual}}\ar[d]_-{\theta_{X_K}^{\dual}}^-{\simeq}\\
					  {H^0(J_K,\Omega^1_{J_K/K})} & \ar[l]^-{-\varphi^*}_-{\simeq} {H^0(\Dual{J}_K,\Omega^1_{\Dual{J}_K/K})} \ar[r]_-{\psi_{J_K}^0}^-{\simeq}  &{H^1(J_K,\O_{J_K})}
					  & \ar[l]^-{H^1(j)^{\dual}} {H^1(X_K,\O_{X_K})^{\dual}}\\
					  & \save[]+<2.3cm,0cm>*{H^0(X_K,\Omega^1_{X_K})} \ar@/_1pc/[rru]_-{(\ref{duality1})}^-{\simeq} \ar@{<-}@/^1pc/[ul]^-{j^*}\restore
				}\label{bigdualdiag}
	  \end{equation}
	  Here, $\psi_{J_K}^0$ is the usual duality (defined using the K\"unneth formula and the first Chern class of the Poincar\'e bundle; see 5.1.3 and Lemme 5.1.4 of \cite{BBM}),
	  the map $\text{ev}$ is the canonical evaluation pairing, and $\text{can}$ is the canonical map obtained by extending sections of $\omega_{J_K}$
	  to invariant differential forms on $J_K$ \cite[\S4.2 Propositions 1, 2]{BLR}.  We claim that each of the three small squares in (\ref{bigdualdiag}) commute, and that the bottom
	  ``sector" anti-commutes.  For the first square, this follows from the fact that the composite $ \Pic^0(j)\circ (-\varphi):J \rightarrow J$ is the identity 
	  map \cite[Lemma  6.9]{MilneJacobians}, together with the fact that the canonical map $\text{can}$ is functorial.  The same reasoning shows that the third 
	  square commutes, using
	  the functoriality of the identification $\theta_K$ \cite[Proposition 1.3 (c)]{LiuNeron}.  The commutativity of the middle square follows immediately from 5.1.1 and the proof of
	  Th\'eor\`eme 5.1.6 in \cite{BBM}.  That the bottom sector region anti-commutes is the content of \cite[Theorem 5.1]{colemanduality}.    
	 Note, as a particular consequence of these commutativity statements, that every map occurring in (\ref{bigdualdiag}) is an isomorphism.	  

	  Using the fact that the canonical duality $\omega_{J_K}\simeq\scrLie(J_K)$ is functorial in $J_K$ (see \cite[Proposition 1.1 (b)]{LiuNeron}) together
	  with the agreement of $\Pic^0(j): \Dual{J}_K\rightarrow J$ and $-\varphi^{-1}$, as above, we conclude that the composite isomorphism 
	  $\Gamma(\Spec K,\omega_{J_K}) \rightarrow \Lie(J_K)^{\vee}$ along the top row of (\ref{bigdualdiag}) is the canonical evaluation duality for $J_K$.  
	  Thus, on generic fibers,
	  the map (\ref{Rthmtrick}) is none other than the map induced by the top, right, and bottom-right edges in (\ref{bigdualdiag}).  But by definition, 
	  the left vertical map of (\ref{can}) coincides with the composite of the left and bottom-left edges of (\ref{bigdualdiag}) on generic fibers, 
	  and is thus the negative of (\ref{Rthmtrick}), as claimed.
	  
	  Now that we know that the left and right vertical maps in (\ref{can}) are isomorphisms when $X$ has rational singularities, it follows that the same is true
	  of all 3 vertical maps, as desired.
\end{proof}

\begin{remark}
	That (\ref{Rthmtrick}) coincides with the left vertical map in (\ref{can}) over generic fibers is essentially Theorem B.4.1 of \cite{GDBC}.  
	We have chosen here to present a different
	  proof because \cite[Theorem B.4.1]{GDBC} rests upon knowing {\em a priori} that the natural pullback map $\Omega^1_{J_K/K}\rightarrow j_*\Omega^1_{X_K/K}$
	  is an isomorphism, while we prefer to deduce this fact as a corollary of our main result.
\end{remark}

\section{Comparison of integral structures}\label{intcomparison}
	
In this section, we use Theorem \ref{main} to prove a comparison result for integral structures in de~Rham cohomology.  
As usual, we fix a discrete valuation ring $R$ with field of fractions $K$.

Let $A_K$ be an abelian variety over $K$.  It is well-known that the Lie algebra of the universal extension $E(\Dual{A}_K)$   
of the dual abelian variety $\Dual{A}_K$ is naturally isomorphic to the first de~Rham cohomology of $A_K$ over $K$, compatibly with
Hodge filtrations:

\begin{proposition}\label{LieUniv}
	There is a canonical isomorphism of short exact sequences of finite dimensional $K$-vector spaces
	\begin{equation*}
		\xymatrix{
			0\ar[r] & {\Lie(\omega_{A_K})} \ar[r]\ar[d]^-{\simeq} & {\Lie(E(\Dual{A}_K))} \ar[r]\ar[d]^-{\simeq} & {\Lie(\Dual{A}_K)} \ar[r]\ar[d]^-{\simeq} & 0 \\
			0\ar[r] & {H^0(A_K,\Omega^1_{A_K/K})} \ar[r] & {H^1_{\dR}(A_K/K)} \ar[r] & {H^1(A,\O_A)} \ar[r] & 0
		}
	\end{equation*}
\end{proposition}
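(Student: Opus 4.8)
The plan is to obtain the top row by applying the functor $\scrLie$ to the universal extension sequence (\ref{universal}) and then to identify each of its three terms with the corresponding term of the Hodge filtration sequence. Applying $\scrLie$ to (\ref{universal}) is left exact by definition and right exact because $E(\Dual{A}_K)\to \Dual{A}_K$ is smooth (cf. \cite[Proposition 1.1 (c)]{LiuNeron}), so the top row is a short exact sequence. For the outer terms the identifications are standard: since $\omega_{A_K}$ is a vector group it is canonically its own Lie algebra, and $\omega_{A_K}=H^0(A_K,\Omega^1_{A_K/K})$ because every global $1$-form on an abelian variety is invariant; and $\scrLie(\Dual{A}_K)=\scrLie(\Pic^0_{A_K/K})\cong H^1(A_K,\O_{A_K})$ via the canonical identification $\scrLie(\Pic_{A_K/K})\cong R^1f_*\O_{A_K}$ (\cite[\S 8.4, Theorem 1]{BLR} or \cite[Proposition 1.3 (b)]{LiuNeron}), noting that passing to the identity component $\Dual{A}_K=\Pic^0$ does not change the Lie algebra \cite[Proposition 1.1 (d)]{LiuNeron}. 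With the two outer vertical arrows in hand, everything reduces to constructing a compatible isomorphism on the middle term.

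For the middle term I would use Proposition \ref{MMunivext} (with $S=\Spec K$) to identify $E(\Dual{A}_K)$ with the group functor $\scrExtrig_K(A_K,\Gm)$, and then express its Lie algebra through line bundles with connection. Fixing a generator $\tau$ of $\omega_{\Gm}$ and using the $\mathbf{E}_{\tau}$-description of Lemma \ref{extrigaltdesc} (valid over $\Spec K$ and $\Spec K[\epsilon]$ alike), a rigidified extension $(E,\eta)$ of $A_K$ by $\Gm$ yields, via local trivializing sections $s_i$ and the connection forms $\omega_i:=s_i^*\eta$, a line bundle $\L\in\Pic^0_{A_K/K}$ equipped with an integrable connection $\nabla$; this is exactly the absolute version of the recipe carried out in Step 1 of \S\ref{mainthmpf}, and since $A_K$ is $K$-smooth the relative dualizing sheaf is just $\Omega^1_{A_K/K}$ and regular connections are ordinary connections (Proposition \ref{diffmap}). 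In this way $E(\Dual{A}_K)$ is identified with the group functor of degree-zero line bundles with connection, whose sections are computed by the hypercohomology of the two-term complex $\O_{A_K}^{\times}\xrightarrow{d\log}\Omega^1_{A_K/K}$ in the manner of Lemma \ref{identify}. Now apply $\scrLie$: passing to $K[\epsilon]$-points trivial modulo $\epsilon$ replaces $\Gm$ by its Lie algebra $\Ga$ and the logarithmic derivative by the ordinary differential (because $d\log(1+\epsilon h)=\epsilon\,dh$), so that $\scrLie(E(\Dual{A}_K))\cong \H^1\big(A_K,[\O_{A_K}\xrightarrow{d}\Omega^1_{A_K/K}]\big)=H^1_{\dR}(A_K/K)$, the restriction to $\Pic^0$ being invisible to $\scrLie$. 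The naive filtration of this two-term complex recovers the Hodge filtration sequence, and this furnishes the desired middle arrow together with the commutativity of both squares.

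I expect the main obstacle to be twofold. First, one must check that the connections produced above are integrable and, relatedly, that $H^1$ of the full de~Rham complex agrees with $\H^1$ of its two-term truncation $[\O_{A_K}\to\Omega^1_{A_K/K}]$; this holds because invariant $1$-forms on $A_K$ are closed, so the relevant classes are represented by closed forms and $H^0(\Omega^1_{A_K/K})\to H^0(\Omega^2_{A_K/K})$ vanishes. Second, and more substantively, one must verify that the middle isomorphism is genuinely compatible with the two outer identifications already fixed, i.e. that both squares commute. Compatibility on the sub-object $H^0(\Omega^1_{A_K/K})$ is essentially tautological from the naive filtration and from the way a rigidification records an invariant differential; compatibility on the quotient requires matching the edge map $H^1_{\dR}(A_K/K)\to H^1(A_K,\O_{A_K})$ with the description of $\scrLie(\Dual{A}_K)=\scrLie(\Pic^0)$ as infinitesimal deformations of the trivial line bundle, which is where the only real bookkeeping occurs. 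Once both squares commute, the five lemma (or simply functoriality of the filtered complex) finishes the argument; for robustness one may alternatively appeal to the constructions of Mazur--Messing \cite[I]{MM} and of \cite{BBM}.
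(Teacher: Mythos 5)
The paper offers no argument for Proposition \ref{LieUniv} at all: its proof is the single line ``See \cite[I, \S4]{MM}.'' Your proposal is therefore necessarily a different route---a self-contained proof---and in outline it is correct. It is also a natural one, because it ports to $A_K$ exactly the machinery the paper builds for curves: the $\mathbf{E}_{\tau}$-description of Lemma \ref{extrigaltdesc}, the \v{C}ech dictionary of Lemma \ref{identify}, the extension-to-line-bundle-with-connection recipe of Step 1 of \S\ref{mainthmpf}, and the computation $d\log(1+\epsilon h)=\epsilon\,dh$ which is the heart of the Lie-algebra lemma in \S\ref{intcomparison}. What the paper's citation buys is generality and pre-verified compatibilities (Mazur--Messing treat abelian schemes over an arbitrary base); what your argument buys is that the proposition becomes a corollary of tools already present in the paper, over the field $K$ where it is used.

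One step needs more than you give it, and it is precisely the step that has no analogue for curves. For $g=\dim A_K>1$, the complex $[\O_{A_K}\xrightarrow{d}\Omega^1_{A_K/K}]$ is the naive truncation of the full de~Rham complex, and the truncation exact sequence gives $0\to H^1_{\dR}(A_K/K)\to \H^1\bigl(A_K,[\O_{A_K}\to\Omega^1_{A_K/K}]\bigr)\xrightarrow{\delta} H^0(A_K,\Omega^2_{A_K/K})$, where $\delta$ is the curvature map. Your justification (``$H^0(\Omega^1)\to H^0(\Omega^2)$ vanishes'') only shows that $\delta$ kills the subspace $H^0(A_K,\Omega^1_{A_K/K})$, not every class; in general $\H^1$ of the truncation can be strictly larger than $H^1_{\dR}$. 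Two standard repairs: (i) apply your closedness remark not to arbitrary classes but to those in the image of $\scrLie(\scrExtrig_K(A_K,\Gm))$ --- there the connection forms are $s_i^*\eta$ with $\eta$ an invariant differential on the commutative extension group, invariant differentials on smooth commutative group schemes are closed, so the curvature $s_i^*(d\eta)$ vanishes and the image lands in $H^1_{\dR}(A_K/K)$; the short five lemma, with your two outer isomorphisms and the commutativity checks you flag, then finishes. Or (ii) invoke degeneration of Hodge--de~Rham for abelian varieties (valid in any characteristic) to get $\dim_K\H^1\bigl(A_K,[\O_{A_K}\to\Omega^1_{A_K/K}]\bigr)=2g=\dim_K H^1_{\dR}(A_K/K)$, forcing $\delta=0$. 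Note that route (i) also lets you drop your asserted identification of $E(\Dual{A}_K)$ with the functor of degree-zero line bundles with connection (which as stated silently uses the Barsotti--Weil isomorphism $\Dual{A}_K\simeq\scrExt_K(A_K,\Gm)$): the five lemma needs only the \emph{map} of Lie algebras, not its bijectivity. On a curve these issues are invisible since $\Omega^2=0$, which is why Lemma \ref{identify} and \S\ref{intcomparison} never mention integrability; for $A_K$ of higher dimension they are exactly the point.
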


\begin{proof}
	See \cite[I, \S4]{MM}.
\end{proof}

Note that since $\omega_{A_K}$ is a vector group, we have a canonical identification of $\Lie(\omega_{A_K})$ with  the global sections of
$\omega_{A_K}$.
We deduce from Proposition \ref{LieUniv} and Proposition \ref{canonicalext} the following Corollary, which equips the de~Rham cohomology of $A_K$ with a canonical
integral structure:

\begin{corollary}\label{MMintstr}
	Let $A$ $($respectively $\Dual{A}$$)$ be the N\'eron model of $A_K$ $($respectively $\Dual{A}_K$$)$ over $R$
	  and let $\E(\Dual{A})$ be the canonical extension of $\Dual{A}$ $($Definition $\ref{canextdefn})$.
	The sequence of Lie algebras 
	\begin{equation}
		\xymatrix{
			0\ar[r] & {\Lie(\omega_A)} \ar[r] & {\Lie(\E(\Dual{A}))} \ar[r] & {\Lie(\Dual{A})} \ar[r] & 0
		}\label{Lieseqcan}
	\end{equation}
	associated to the canonical extension $(\ref{canextexseq})$ of $A$ over $R$ is a short exact sequence of finite free $R$-modules
	that is contravariantly functorial in $K$-morphisms of abelian varieties $A_K\rightarrow B_K$ over $K$ and recovers
	the $3$-term Hodge filtration of $H^1_{\dR}(A_K/K)$ after extending scalars to $K$.
	That is, $(\ref{Lieseqcan})$ provides a canonical integral structure on the $3$-term Hodge filtration of $H^1_{\dR}(A_K/K)$.
\end{corollary}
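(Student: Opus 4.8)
The plan is to obtain (\ref{Lieseqcan}) simply by applying the Lie algebra functor to the short exact sequence of smooth group schemes (\ref{canextexseq}) furnished by Proposition \ref{canonicalext}, and then to verify the three asserted properties in turn. First I would apply $\scrLie$ to (\ref{canextexseq}). The functor $\scrLie$ is left exact (being built from the left-exact evaluation functors $G\rightsquigarrow G(T[\epsilon])$ and $G\rightsquigarrow G(T)$), and since the quotient map $\E(\Dual{A})=\scrExtrig_S(A^0,\Gm)\rightarrow \Dual{A}$ is smooth, \cite[Proposition 1.1 (c)]{LiuNeron} guarantees that the induced map $\scrLie(\E(\Dual{A}))\rightarrow \scrLie(\Dual{A})$ is surjective. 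This yields a short exact sequence of $\O_S$-modules. Because $S=\Spec R$ is affine and the three sheaves are locally free of finite rank (the outer terms since $\omega_A$ is a vector group and $\Dual{A}$ is a smooth $R$-group scheme, the middle term since $\E(\Dual{A})$ is smooth and separated), passing to global sections via $\Lie(-)=\scrLie(-)(S)$ is exact and produces (\ref{Lieseqcan}) as a short exact sequence of finite free $R$-modules.

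For the identification with the Hodge filtration after extending scalars, I would use that the formation of $\scrLie$ commutes with the base change $\Spec K\rightarrow \Spec R$ for the smooth groups in question, so that $(\ref{Lieseqcan})\otimes_R K$ is canonically the sequence of Lie algebras attached to the $K$-fiber of (\ref{canextexseq}). By Remarks \ref{unicaneq} this $K$-fiber is precisely the universal extension (\ref{universal}) of $\Dual{A}_K$ by the vector group $\omega_{A_K}$. Proposition \ref{LieUniv} then supplies a canonical isomorphism of this Lie-algebra sequence with the three-term Hodge filtration of $H^1_{\dR}(A_K/K)$, which is exactly the asserted recovery statement (using, as noted after Proposition \ref{LieUniv}, the canonical identification of $\Lie(\omega_{A_K})$ with the global sections of $\omega_{A_K}$).

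To establish contravariant functoriality I would start from a $K$-morphism $\phi_K\colon A_K\rightarrow B_K$ of abelian varieties. The N\'eron mapping property extends $\phi_K$ uniquely to a homomorphism $\phi\colon A\rightarrow B$ of N\'eron models, which necessarily carries the relative identity component $A^0$ into $B^0$ (a homomorphism of smooth groups sends identity components to identity components). Contravariance of $\scrExtrig_S(-,\Gm)$ in its first argument then furnishes a map $\E(\Dual{B})=\scrExtrig_S(B^0,\Gm)\rightarrow \scrExtrig_S(A^0,\Gm)=\E(\Dual{A})$ compatible with the sequences (\ref{canextexseq}) for $B$ and for $A$; applying $\Lie$ gives a morphism from (\ref{Lieseqcan}) for $B$ to (\ref{Lieseqcan}) for $A$. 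Compatibility with composition is immediate from the uniqueness in the N\'eron mapping property together with the functoriality of $\scrExtrig_S(-,\Gm)$ and of $\Lie$.

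The one point requiring genuine care is that this functoriality agrees, after $\otimes_R K$, with the standard contravariant functoriality of de~Rham cohomology under the isomorphism of Proposition \ref{LieUniv}; I expect this to be the most delicate step, since it requires the Mazur-Messing identification $\Lie(E(\Dual{A}_K))\simeq H^1_{\dR}(A_K/K)$ to be itself functorial in $A_K$. This is not a substantive obstruction: it follows from the construction in \cite[I, \S4]{MM}, in which the universal extension and the passage to its Lie algebra are manifestly functorial, so that the $K$-fiber of the $\scrExtrig$-functoriality constructed above is identified with the de~Rham pullback $H^1_{\dR}(B_K/K)\rightarrow H^1_{\dR}(A_K/K)$. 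It is, however, the one place where one must invoke the internal workings of Proposition \ref{LieUniv} rather than merely its statement. Once this compatibility is recorded, the three properties together exhibit (\ref{Lieseqcan}) as the claimed canonical integral structure on the Hodge filtration of $H^1_{\dR}(A_K/K)$.
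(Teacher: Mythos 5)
Your proposal is correct and follows essentially the same route as the paper's proof: apply the left-exact functor $\Lie$ to (\ref{canextexseq}), using smoothness of $\scrExtrig_S(A^0,\Gm)\rightarrow \Dual{A}$ together with \cite[Proposition 1.1 (c)]{LiuNeron} for exactness and freeness, derive functoriality from the N\'eron mapping property plus functoriality of $\scrExtrig_S(-,\Gm)$, and obtain the Hodge-filtration identification from compatibility of $\Lie$ with the scalar extension $R\rightarrow K$, Remarks \ref{unicaneq}, and Proposition \ref{LieUniv}. Your final paragraph, insisting that the $\scrExtrig$-functoriality agree after $\otimes_R K$ with de~Rham pullback under the isomorphism of Proposition \ref{LieUniv}, is a compatibility the paper leaves implicit (it is indeed supplied by the functoriality of the construction in \cite[I, \S4]{MM}), so it is a welcome extra precision rather than a departure.
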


\begin{proof}
	Each term in  (\ref{canextexseq}) is a smooth $S$-scheme; in particular
	the map $\scrExtrig_S(A^0,\Gm)\rightarrow \Dual{A}$ is smooth \cite[Exp. $\mathrm{\Rmnum{6}}_{\mathrm{B}}$, Proposition 9.2 vii]{SGA3vol1}.
	Thus, by \cite[Proposition 1.1 (c)]{LiuNeron}, applying the left exact functor $\Lie$ to (\ref{canextexseq}) yields a short exact sequence of 
	finite $R$-modules which are free by smoothness.
	Since any homomorphism of N\'eron models $A\rightarrow B$ induces a map on identity components $A^0\rightarrow B^0$,
	it follows from the N\'eron mapping property and the functoriality of the canonical extension (\ref{canextexseq}) that 
	 (\ref{Lieseqcan}) is contravariantly functorial in $K$-morphisms of abelian varieties $A_K\rightarrow B_K$
	over $K$.  Since the formation of Lie algebras commutes with the scalar extension $R\rightarrow K$,
	we deduce from Proposition \ref{LieUniv} and the fact that $K$-fiber of (\ref{canextexseq}) is the universal extension
	of $\Dual{A}_K$ by a vector group (see Remark \ref{unicaneq}) that (\ref{Lieseqcan}) recovers the Hodge filtration of
	$H^1_{\dR}(A_K/K)$ after extending scalars to $K$.
\end{proof}

\begin{remark}
 	Supposing that Grothendieck's pairing (\ref{Grpairing}) is right non-degenerate,
	so $\scrExtrig_S(A,\Gm)$ is a smooth and separated $S$-scheme by Proposition \ref{repExtrig},
	the natural map of short exact sequences (\ref{idencompmap}) induces an isomorphism on associated exact 
	sequences of Lie algebras by Lemma \ref{idencomp} and \cite[Proposition 1.1 (d)]{LiuNeron}.
\end{remark}

For the remainder of this section, we suppose that $A_K=J_K$ is the Jacobian of a smooth proper and geometrically connected curve $X_K$ over $K$.
Recall that the 3-term Hodge filtration
\begin{equation}
	\xymatrix{
		0\ar[r] & {H^0(X_K,\Omega^1_{X_K/K})} \ar[r] & {H^1_{\dR}(X_K/K)} \ar[r] & {H^1(X_K,\O_{X_K})} \ar[r] & 0
	}
\end{equation}
is auto-dual with respect to the cup product pairing on $H^1_{\dR}(X_K/K)$ and
is contravariantly and covariantly functorial in finite morphisms of smooth and proper curves $g:X_K\rightarrow X_K'$
via pullback $g^*$ and pushforward $g_*$ of differentials. 
Attached to any such morphism, we have associated
homomorphisms of abelian varieties 
\begin{equation*}
	\Pic^0(g) : J_K'\rightarrow J_K \qquad\text{and}\qquad \Alb(g): J_K\rightarrow J_K'
\end{equation*}
by Picard and Albanese functoriality (where $J_K'$ is the Jacobian of $X_K'$).  The following Proposition is well-known, but we have been unable
to find a proof in the literature so we include one here for the convenience of the reader.

\begin{proposition}\label{JacdR}
	There is a canonical isomorphism of short exact sequences of $K$-vector spaces
	\begin{equation}
		\xymatrix{
			0\ar[r] & {H^0(J_K,\Omega^1_{J_K/K})} \ar[r]\ar[d]^-{\simeq} & {H^1_{\dR}(J_K/K)} \ar[r]\ar[d]^-{\simeq} & {H^1(J_K,\O_{J_K})}\ar[r]\ar[d]^-{\simeq} & 0\\
			0\ar[r] & {H^0(X_K,\Omega^1_{X_K/K})} \ar[r] & {H^1_{\dR}(X_K/K)} \ar[r] & {H^1(X_K,\O_{X_K})}\ar[r] & 0
		}\label{Hodgefiliden}
	\end{equation}
	This isomorphism respects the auto-dualities of the top and bottom rows.  Furthermore, 
	 for any finite morphism $g: X_K\rightarrow X_K'$, the map $(\ref{Hodgefiliden})$ intertwines $\Alb(g)^*$ with $g^*$ and $\Pic^0(g)^*$ with $g_*$.
\end{proposition}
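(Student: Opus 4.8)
The plan is to define the isomorphism (\ref{Hodgefiliden}) to be pullback along an Albanese morphism and to read off all of its asserted properties from functoriality of de~Rham cohomology. Assume first that $X_K(K)\neq\emptyset$ and fix $x\in X_K(K)$, with associated Albanese map $j:=j_{x,K}\colon X_K\to J_K$; take (\ref{Hodgefiliden}) to be $j^*$. Pullback automatically respects the Hodge filtration, so it suffices to check that $j^*$ is an isomorphism on each graded piece. On the sub $H^0(\Omega^1)$ it is the pullback $H^0(J_K,\Omega^1_{J_K/K})\to H^0(X_K,\Omega^1_{X_K/K})$, the classical identification of the cotangent space of the Jacobian at the origin with the regular differentials on $X_K$. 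On the quotient $H^1(\O)$, functoriality of the canonical identification $H^1(Y,\O_Y)\simeq\Lie(\Pic_{Y/K})$ (\cite[\S8.4, Theorem 1]{BLR}, \cite[Proposition 1.3 (b)]{LiuNeron}) shows the induced map is $\Lie(\Pic^0(j))\colon\Lie(\Dual{J}_K)\to\Lie(J_K)$, an isomorphism since $\Pic^0(j)$ is the negative of the inverse of the canonical principal polarization by \cite[Lemma 6.9]{MilneJacobians}. The five lemma then shows $j^*$ is an isomorphism on $H^1_{\dR}$. Two base points give Albanese maps differing by a translation of $J_K$, which acts trivially on $H^1_{\dR}(J_K/K)$ (invariant differentials are translation invariant, and translation by a point of the connected group $J_K$ acts trivially on $H^*(J_K,\O_{J_K})$); hence $j^*$ is canonical. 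This independence makes $j^*$ Galois equivariant, so in the general case I would pass to a finite Galois $K'/K$ for which $X_{K'}(K')\neq\emptyset$, construct the isomorphism over $K'$, and descend it to $K$.

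For the auto-duality I would compare the cup-product pairing on $H^1_{\dR}(X_K/K)$ with the pairing on $H^1_{\dR}(J_K/K)$ attached to the canonical principal polarization of $J_K$. Since $j^*$ is a map of de~Rham cohomology it is compatible with cup products $H^1\times H^1\to H^2$, so the one substantive point is that the two scalar-valued pairings agree under $j^*$--- equivalently, that the polarization coming from the $\Theta$-divisor is compatible with the fundamental class of $X_K$. This belongs to the same circle of duality compatibilities, those of \cite{BBM} and \cite[Theorem 5.1]{colemanduality}, that underlie diagram (\ref{bigdualdiag}) in the proof of Theorem \ref{main}; I expect this auto-duality statement to be the main obstacle, the remaining assertions being formal.

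For functoriality in a finite morphism $g\colon X_K\to X_K'$, choose base points with $x'=g(x)$, so that $j'\circ g=\Alb(g)\circ j$ commutes up to a translation (harmless by the base-point independence above). Applying de~Rham pullback gives $j^*\circ\Alb(g)^*=g^*\circ (j')^*$, which is exactly the assertion that (\ref{Hodgefiliden}) intertwines $\Alb(g)^*$ with $g^*$. The Picard statement then follows by duality: $\Pic^0(g)$ is the dual of $\Alb(g)$ under the canonical principal polarizations, so $\Pic^0(g)^*$ is the adjoint of $\Alb(g)^*$ for the auto-duality pairing, while $g_*$ is the adjoint of $g^*$ for the cup product (projection formula). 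Combining the auto-duality isometry with the Albanese statement and taking adjoints yields that (\ref{Hodgefiliden}) intertwines $\Pic^0(g)^*$ with $g_*$, completing the proof.
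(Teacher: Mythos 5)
Your proposal follows essentially the same route as the paper's proof: define (\ref{Hodgefiliden}) as pullback along an Albanese map, check base-point independence via the triviality of the translation action, use Galois descent for the general case, get the isomorphism on $H^1(\O)$ from $\Lie(\Pic^0(j))$ and \cite[Lemma 6.9]{MilneJacobians}, and prove the Picard functoriality by the same adjointness-under-duality argument (adjointness of $g^*,g_*$ for cup product, duality of $\Alb(g)$ and $\Pic^0(g)$ under the polarizations, and the Coleman compatibility). The only divergences are minor: where you invoke the classical fact that $j^*$ is an isomorphism on $H^0(\Omega^1)$, the paper instead deduces it by dualizing the $H^1(\O)$ isomorphism through the duality compatibility; and the auto-duality you flag as ``the main obstacle'' is not an open issue but is exactly \cite[Theorem 5.1]{colemanduality}, which the paper quotes verbatim as Lemma \ref{coleduality}.
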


\begin{proof}
	We first suppose that $X(K)$ is nonempty and select $x_0\in X(K)$.  Let $j:X_K\rightarrow J_K$ be the associated Albanese morphism.
	By pullback along $j$, we obtain a morphism on de~Rham cohomology
	that yields a commutative diagram (\ref{Hodgefiliden}).  Clearly this map commutes with extension of $K$ (using the same $x_0$) and we claim that
	it is independent of our choice $x_0$.  Each term in the Hodge filtration of $H^1_{\dR}(J_K/K)$ 
	is clearly (the global sections of) a vector group over $K$; denoting any one of them by $V$ it 
	suffices to show that the natural map $J_K\rightarrow \mathbf{Aut}_K(V)$
	given by translations is the zero map.  Since the target is affine of finite type over $K$ and the 
	source is an abelian variety,
	this map factors through a section of the target and must therefore be identically zero, as claimed.
	It follows from Galois descent that we have a canonical map (\ref{Hodgefiliden}) even when $X(K)$ is empty.

	Let us denote by $H(J_K)$ (respectively $H(X_K)$) the three-term exact sequence of $K$-vector spaces given by the top (respectively bottom)
	row of (\ref{Hodgefiliden}).  
	By passing to an extension of $K$ if need be, we may suppose that $X_K(K)$ is nonempty and
	that (\ref{Hodgefiliden}) is given by pullback along an albenese morphism $j:X_K\rightarrow J_K$
	associated to some $x_0\in X_K(K)$.
	To show that (\ref{Hodgefiliden}) is an isomorphism, we will exploit the natural auto-dualities on $H(J_K)$
	and $H(X_K)$.  For this to be successful, it is essential to know that these dualities are compatible:
	
	\begin{lemma}\label{coleduality}
		The canonical auto-dualities of the short exact sequences $H(J_K)$ and $H(X_K)$ are compatible via $j^*$.  That is, the 
		following diagram commutes:
		\begin{equation*}
			\xymatrix{
			{H(J_K)} \ar[d]_-{j^*} & \ar[l]_-{\simeq} {H(J_K)^{\dual}} \\
			{H(X_K)} \ar[r]^-{\simeq} & {H(X_K)^{\dual}} \ar[u]_-{(j^*)^{\vee}}
			}
		\end{equation*}
	\end{lemma}
	
	\begin{proof}
		This is Theorem 5.1 of \cite{colemanduality}.
	\end{proof}	
	
	Continuing with the proof of Proposition \ref{JacdR}, observe that
	 the functoriality of the canonical identification $H^1(X_K,\O_{X_K}) \simeq \Lie(\Pic^0_{X_K/K})$
	yields a commutative diagram
	\begin{equation*}
		\xymatrix{
			{H^1(J_K,\O_{J_K})} \ar[r]^-{\simeq} \ar[d]_-{j^*} & {\Lie(\Pic^0_{J_K/K})} \ar[d]^-{\Lie(\Pic^0(j))} \\
			{H^1(X_K,\O_{X_K})} \ar[r]^-{\simeq} & {\Lie(\Pic^0_{X_K/K})}
		}
	\end{equation*}
	(see \cite[Proposition 1.3 (c)]{LiuNeron}).  Due to \cite[Lemma 6.9]{MilneJacobians}, the map $\Pic^0(j):\Dual{J}_K\rightarrow J_K$
	is the negative of the inverse of the canonical principal polarization $J_K\rightarrow \Dual{J}_K$, so in particular
	it is an isomorphism.  Thus, the map $j^*: H^1(J_K,\O_{J_K})\rightarrow H^1(X_K,\O_{X_K})$ is an isomorphism.
	Taking $K$-linear duals and using the auto-duality of $H(J_K)$ and $H(X_K)$, it follows from Lemma \ref{coleduality}
	that the map $j^*:H^0(J_K,\omega_{J_K})\rightarrow H^0(X_K,\Omega^1_{X_K/K})$ is also an isomorphism.
	We conclude that all three vertical maps of (\ref{Hodgefiliden}) are isomorphisms, as desired.
	
	It remains to check our claims concerning the functoriality of (\ref{Hodgefiliden}) in finite morphisms of smooth proper and geometrically
	connected curves $g:X_K\rightarrow X_K'$.  Denote by $J_K'$ the Jacobian of $X_K'$ and by $j':X_K'\rightarrow J_K'$ the albanese map
	attached to $g(x_0)$.  Albanese functoriality gives a commutative diagram
	\begin{equation}
		\xymatrix{
			X_K\ar[r]^-{g}\ar[d]_-{j} & X'_K\ar[d]^-{j'}\\
			J_K\ar[r]_-{\Alb(g)} & J_K'
		}\label{albb}
	\end{equation}
	from which we easily obtain the commutative diagram of short exact sequences
	\begin{equation}
		\xymatrix{
			H(J_K') \ar[r]^-{\Alb(g)^*}\ar[d]_-{{j'}^*} & H(J_K)\ar[d]^-{j^*}\\
			H(X'_K)\ar[r]_-{g^*} & H(X_K)
		}\label{Albcompat}
	\end{equation}
	which shows that (\ref{Hodgefiliden}) intertwines the maps $g^*$ and $\Alb(g)^*$.  Dualizing (\ref{Albcompat}) and using
	Lemma \ref{coleduality} gives a commutative diagram
		\begin{equation}
		\xymatrix{
			H(J_K)\ar[d]_-{j^*} & \ar[l]_-{\simeq} H(J_K)^{\vee}\ar[r]^-{(\Alb(g)^*)^{\vee}} & H(J'_K)^{\vee}\ar[r]^-{\simeq} &  H(J'_K)\ar[d]^-{{j'}^*} \\
			H(X_K) \ar[r]^-{\simeq} & H(X_K)^{\vee}\ar[u]_-{(j^*)^{\vee}}\ar[r]_-{(g^*)^{\vee}} & H(X'_K)^{\vee}\ar[u]_-{({j'}^*)^{\vee}} & \ar[l]_-{\simeq} H(X'_K)
		}\label{pictrace}
	\end{equation}
	  By \cite[Theorem 5.11 (3)]{CaisDualizing}, the maps $g^*$ and $g_*$ are adjoint with respect to the cup-product pairing on $H^1_{\dR}(X_K/K)$,
	  so the composite map on the bottom row of (\ref{pictrace}) coincides with $g_*$.  We claim that $\Alb(g)^*$ and $\Pic^0(g)^*$ are adjoint
	  with respect to the pairing on $H^1_{\dR}(J_K/K)$, so the top row of (\ref{pictrace}) coincides with $\Pic^0(g)^*$.
	 By definition, the duality pairing on $H^1_{\dR}(J_K/K)$ is deduced from the natural perfect pairing
	 \begin{equation}
	 	\xymatrix{
			{H^1_{\dR}(J_K/K) \times H^1_{\dR}(\Dual{J}_K/K)} \ar[r] & K
		}\label{BBMpair}
	 \end{equation}
	(defined as in \cite[\S5]{BBM}) by identifying the de~Rham cohomology of $\Dual{J}_K$ with that of $J_K$ via the principal polarization
	$J_K\simeq \Dual{J}_K$.  Now if $u:J'_K\rightarrow J_K$ is any morphism with dual $\Dual{u}:\Dual{J}_K\rightarrow \Dual{J}_K'$, then
	the induced maps $u^*$ and $\Dual{u}^*$ on de~Rham cohomology are adjoint with respect to (\ref{BBMpair}) by \cite[5.1.3.3]{BBM}.
	Applying this to $u=\Pic^0(g)$, our claim that $\Alb(g)^*$ and $\Pic^0(g)^*$ are adjoint then follows from the assertion that the composite map
	\begin{equation*}
		\xymatrix{
			{J_K} \ar[r]_-{\varphi}^{\simeq} & {\Dual{J}_K} \ar[r]^-{\Dual{\Pic^0(g)}} & {\Dual{J}_K'} & \ar[l]_-{\simeq}^-{\varphi'} {J_K'}
		}
	\end{equation*}
	coincides with $\Alb(g)$, where $\varphi$ and $\varphi'$ are the canonical principal polarizations.  
	But this follows by applying the functor $\Pic^0$ to the diagram (\ref{albb}) and using the fact that $\Pic^0(j)$ and $\Pic^0(j')$
	coincide with $-\varphi^{-1}$ and $-{\varphi'}^{-1}$, respectively, thanks to \cite[Lemma 6.9]{MilneJacobians}.
\end{proof}

Fix a proper flat and normal model $f:X\rightarrow S$ of $X_K$ over $S=\Spec R$, and denote by
$\omega_{X/S}^{\bullet}$ the two-term  $\O_S$-linear complex of $\O_X$-modules
$d: \O_X\rightarrow \omega_{X/S}$ furnished by Proposition \ref{diffmap}.
We will say that $X$ is an {\em admissible model} of $X_K$ if $X$ has rational 
singularities and $f$ is cohomologically flat in dimension zero.  

Define $H^1(X/R):=\H^1(X,\omega_{X/S}^{\bullet})$.   When $X$ is cohomologically flat, there is a natural
short exact sequence of finite free $R$-modules
\begin{equation}
	\xymatrix{
		0\ar[r] & {H^0(X,\omega_{X/S})} \ar[r] & {H^1(X/R)} \ar[r] & {H^1(X,\O_X)} \ar[r] & 0
		}\label{canonical}
\end{equation}
whose scalar extension to $K$ is identified with the 3-term Hodge filtration exact sequence $H(X_K)$.
Moreover, (\ref{canonical}) is self-dual with respect to the usual cup-product auto-duality of $H(X_K)$ \cite[Proposition 5.8]{CaisDualizing}.
By \cite[Theorem 5.11]{CaisDualizing}, when $X$ is admissible, the integral structure provided by (\ref{canonical}) is canonical:
this short exact sequence is independent of the choice of admissible model $X$ of $X_K$
and is both contravariantly and covariantly functorial via pullback and trace in finite $K$-morphisms 
$X_K\rightarrow X_K'$ of curves having admissible models over $R$.

Via Corollary \ref{MMintstr} and the identification of Hodge filtrations (\ref{Hodgefiliden}), when $X$ is admissible we thus have two canonical integral structures on 
the de~Rham cohomology of $X_K$, and it is natural to ask how these $R$-lattices compare.

\begin{corollary}\label{integralcompare}
	With the notation and hypotheses of Theorem $\ref{main}$, when $X$ has rational singularities
	there is a canonical isomorphism of short exact sequences of finite free $R$-modules
	\begin{equation}
		\xymatrix{
			0\ar[r] & {\Lie(\omega_J)} \ar[r]\ar[d]^-{\simeq} & {\Lie(\E(\Dual{J}))} \ar[r]\ar[d]^-{\simeq} & {\Lie(\Dual{J})} \ar[r]\ar[d]^-{\simeq} & 0\\
			0\ar[r] & {H^0(X,\omega_{X/S})} \ar[r] & {H^1(X/R)} \ar[r] & {H^1(X,\O_X)} \ar[r] & 0
		}\label{intcomp}
	\end{equation}
	that recovers the identification $(\ref{Hodgefiliden})$ after extending scalars to $K$.
\end{corollary}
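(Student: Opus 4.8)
The plan is to deduce (\ref{intcomp}) by applying the Lie algebra functor to the isomorphism of short exact sequences of smooth $S$-group schemes (\ref{maindiag}) provided by Theorem \ref{main} in the rational singularities case, and then to identify the two resulting sequences of $R$-modules with (\ref{Lieseqcan}) and (\ref{canonical}). Since every arrow in (\ref{maindiag}) is a homomorphism of smooth $S$-groups and the horizontal surjections are smooth, applying $\Lie$ will carry (\ref{maindiag}) to an isomorphism of short exact sequences of finite free $R$-modules by \cite[Proposition 1.1 (c)]{LiuNeron}. I would first note that, because the inclusion of the identity component of a group functor with representable fibers induces an isomorphism on Lie algebras \cite[Proposition 1.1 (d)]{LiuNeron}, one has $\Lie(\E(\Dual{J})^0)=\Lie(\E(\Dual{J}))$ and $\Lie(\Dual{J}^0)=\Lie(\Dual{J})$; hence the top row of the resulting diagram is exactly the Mazur--Messing sequence (\ref{Lieseqcan}) of Corollary \ref{MMintstr} for $A=J$ (compare the Remark following that Corollary), with $\Lie(\omega_J)$ canonically the global sections of the vector group $\omega_J$.

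The substance lies in identifying the bottom row with (\ref{canonical}). The outer terms are immediate: as $f_*\omega_{X/S}$ is the vector group attached to the locally free module $f_*\omega_{X/S}$, its Lie algebra is canonically $H^0(X,\omega_{X/S})$; and the canonical identification $R^1f_*\O_X\simeq \scrLie(\Pic_{X/S})$ \cite[\S8.4, Theorem 1]{BLR} together with $\Lie(\Pic^0_{X/S})=\Lie(\Pic_{X/S})$ gives $\Lie(\Pic^0_{X/S})=H^1(X,\O_X)$. The key step is to produce a canonical identification $\Lie(\Pic^{\natural,0}_{X/S})\simeq H^1(X/R)=\H^1(X,\omega_{X/S}^{\bullet})$. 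Since $\Pic^{\natural,0}_{X/S}\hookrightarrow \Pic^{\natural}_{X/S}$ is an open immersion it induces an isomorphism on Lie algebras, so it suffices to compute $\scrLie(\Pic^{\natural}_{X/S})(S)=\ker\bigl(\Pic^{\natural}_{X/S}(S[\epsilon])\to \Pic^{\natural}_{X/S}(S)\bigr)$ by a dual-number argument paralleling the classical computation of $\Lie(\Pic_{X/S})$. Using the \v{C}ech description of Lemma \ref{identify}, I would compute this kernel as the hypercohomology of the kernel complex of the reduction-mod-$\epsilon$ map $\omega^{\times,\bullet}_{X_{S[\epsilon]}/S[\epsilon]}\to \omega^{\times,\bullet}_{X/S}$. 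The zero section $S\to S[\epsilon]$ splits this reduction map, and the base-change compatibility of the dualizing sheaf identifies the kernel complex term-by-term with $[\,1+\epsilon\O_X\to \epsilon\,\omega_{X/S}\,]$. The decisive point is that $d_{S[\epsilon]}\log(1+\epsilon a)=\epsilon\,d(a)$ for a local section $a$ of $\O_X$, so that under $1+\epsilon a\leftrightarrow a$ and $\epsilon\,\omega_{X/S}\leftrightarrow \omega_{X/S}$ this kernel complex is the complex $\omega_{X/S}^{\bullet}=[\O_X\xrightarrow{d}\omega_{X/S}]$ defining $H^1(X/R)$. As the splitting makes the associated hypercohomology sequence short exact, this yields $\Lie(\Pic^{\natural,0}_{X/S})\simeq \H^1(X,\omega_{X/S}^{\bullet})=H^1(X/R)$, and the naturality of the computation with respect to the exact sequence of complexes $0\to \omega_{X/S}[-1]\to \omega_{X/S}^{\bullet}\to \O_X\to 0$ will show that it carries the Lie sequence of (\ref{gpseq}) to (\ref{canonical}), as needed.

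Combining these identifications produces the isomorphism of short exact sequences of finite free $R$-modules (\ref{intcomp}). Finally, I would verify that (\ref{intcomp}) recovers (\ref{Hodgefiliden}) after $\otimes_R K$. Both vertical maps are of formation compatible with the base change $R\to K$, so the generic fiber of (\ref{intcomp}) is the map obtained by applying $\Lie$ to the generic fiber of (\ref{maindiag}); by the construction of (\ref{maindiag}) in \S\ref{mainthmpf} this generic fiber is pullback along the Albanese morphism $j_x\colon X_K\to J_K$, which is precisely the map defining (\ref{Hodgefiliden}) in Proposition \ref{JacdR}. Using that $H^1(X/R)\otimes_R K=H^1_{\dR}(X_K/K)$ and that, by Proposition \ref{LieUniv} and Remark \ref{unicaneq}, the scalar extension of the top row is the Hodge filtration of $H^1_{\dR}(J_K/K)$, one concludes that (\ref{intcomp})$\,\otimes_R K$ is (\ref{Hodgefiliden}).

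I expect the main obstacle to be the middle identification $\Lie(\Pic^{\natural,0}_{X/S})\simeq H^1(X/R)$: the dual-number computation must be carried out carefully enough to see both that $d\log$ degenerates to the canonical derivation $d\colon\O_X\to\omega_{X/S}$ on the subgroup $1+\epsilon\O_X$ (this is where the map $c_{X/S}$ of Proposition \ref{diffmap} enters) and that the resulting hypercohomology identification is compatible with the three-term filtrations on both sides.
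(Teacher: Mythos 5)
Your proposal is correct and follows essentially the same route as the paper: apply $\Lie$ to the isomorphism (\ref{maindiag}), reduce via identity components (equivalently, the fiber-product definition of $\Pic^{\natural,0}_{X/S}$) to computing $\Lie(\Pic^{\natural}_{X/S})$, and carry out the dual-number computation using Lemma \ref{identify} and the split exact sequence of complexes $0\to\omega^{\bullet}_{X/S}\to\omega^{\times,\bullet}_{X_{S[\epsilon]}/S[\epsilon]}\to\omega^{\times,\bullet}_{X/S}\to 0$, where the key identity $d\log(1+\epsilon a)=\epsilon\, da$ identifies the kernel complex with $\omega^{\bullet}_{X/S}$, exactly as in the paper's concluding Lemma. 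The only step the paper makes explicit that you leave implicit is the verification that the resulting identification is $R$-linear rather than merely additive, which the paper checks by comparing the action of $u_s\colon\epsilon\mapsto s\cdot\epsilon$ on the Lie-algebra side with multiplication by $s$ on $\omega^{\bullet}_{X/S}$.
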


\begin{remark}
	Let $g:X_K\rightarrow X_K'$ be any finite map of smooth and geometrically connected curves over $K$ and suppose 
	that $X_K$ and $X_K'$ admit proper flat and normal models over $R$ which have rational singularities and generically smooth closed fibers.
	  (such models are automatically admissible due to Raynaud's ``crit\`ere de platitude cohomologique" \cite[Th\'eor\`eme 7.2.1]{RaynaudPic}).
	By our discussion above, $g$ induces maps $g^*$ and $g_*$ on the canonical integral structure (\ref{canonical}) via pullback and trace, and induces
	maps $\Alb(g)^*$ and $\Pic^0(g)^*$ on the canonical integral structure (\ref{Lieseqcan}) by Albanese and Picard functoriality via the N\'eron mapping property.
	The $R$-isomorphism (\ref{intcomp}) necessarily intertwines $\Alb(g)^*$ with $g^*$ and $\Pic^0(g)^*$ with $g_*$ as such agreement of maps between 
	free $R$-modules may be checked after the flat scalar extension $R\rightarrow K$,  where it follows from Proposition \ref{JacdR}.
\end{remark}

\begin{question}
	As an interesting consequence of Corollary \ref{integralcompare}, the duality statement of Proposition \ref{JacdR}, and the fact that the integral
	structure (\ref{canonical}) is auto-dual with respect to cup-product pairing, we deduce that the 
	autoduality of the Hodge filtration of $H^1_{\dR}(J_K/K)$ preserves the integral structure (\ref{Lieseqcan}).  It seems natural to ask if this is true in 
	greater generality, i.e. if for any abelian variety $A_K$ over $K$, the natural duality isomorphism
	\begin{equation*}
		\xymatrix{
			0\ar[r] & {H^1(\Dual{A}_K,\O_{\Dual{A}_K})^{\dual}} \ar[r]\ar[d]^-{\simeq} & {H^1_{\dR}(\Dual{A}_K/K)^{\dual}} \ar[r]\ar[d]^-{\simeq}
			 & {H^0(\Dual{A}_K,\Omega^1_{\Dual{A}_K})} \ar[r]\ar[d]^-{\simeq} & 0\\
			0\ar[r] & {H^0(A_K,\Omega^1_{A_K/K})} \ar[r] & {H^1_{\dR}(A_K/K)} \ar[r] & {H^1(A,\O_A)} \ar[r] & 0
		}
	\end{equation*}
	(see Lemme 5.1.4 and Th\'eor\`eme 5.1.6 of \cite{BBM}) identifies the corresponding canonical integral structures provided by (\ref{Lieseqcan}).
	It is also natural to wonder how such an identification might come about at the level of canonical extensions and N\'eron models
	or more precisely if the definition of the duality for the de~Rham cohomology of an abelian scheme (or more generally a 1-motive)
	in terms of its universal extension (see \cite[p. 636]{coleman-biext} for the case of abelian schemes, \cite{Bertapelle} for 1-motives and \cite[10.2.7.2]{DeligneHodge3}
	for abelian varieties over $\mathbf{C}$) can be extended to the case of N\'eron models and their canonical extensions.
\end{question}

	To prove Corollary $\ref{integralcompare}$, we proceed as follows.
	  By Theorem \ref{main}, we have an isomorphism of short exact sequences of smooth groups as in (\ref{maindiag}).
	Applying the functor $\Lie$ and using the fact that for any group functor $G$ over $S$ with representable fibers, the inclusion $G^0\hookrightarrow G$
	of the identity component induces an isomorphism on Lie algebras \cite[Proposition 1.1 (d)]{LiuNeron}, we deduce a canonical isomorphism
	of free finite $R$-modules 
	\begin{equation*}
		\xymatrix{
			0\ar[r] & {\Lie(\omega_J)} \ar[r]\ar[d]^-{\simeq} & {\Lie(\E(\Dual{J}))} \ar[r]\ar[d]^-{\simeq} & {\Lie(\Dual{J})} \ar[r]\ar[d]^-{\simeq} & 0\\
			0\ar[r] & {\Lie(\omega_{X/S})} \ar[r] & {\Lie(\Pic^{\natural,0}_{X/S})}\ar[r] & {\Lie(\Pic_{X/S})} \ar[r] & 0
	}.
	\end{equation*}
	By Definition \ref{natiden} we have the canonical identifications
	\begin{equation*}
		\Lie(\Pic^{\natural,0}_{X/S}) = \Lie(\Pic^{\natural}_{X/S} \times_{\Pic_{X/S} }\Pic^0_{X/S}) = 
		\Lie(\Pic^{\natural}_{X/S}) \times_{\Lie(\Pic_{X/S})} \Lie(\Pic^0_{X/S})=\Lie(\Pic^{\natural}_{X/S}),
	\end{equation*}
	so it suffices to identify the left exact sequence of Lie algebras attached to 
	the exact sequence of fppf abelian sheaves ({\em cf.} (\ref{close}) )
	\begin{equation}
		\xymatrix{
			0\ar[r] & {f_*\omega_{X/S}}\ar[r] & {\Pic^{\natural}_{X/S}} \ar[r] & {\Pic_{X/S}}
		}\label{passtoH}
	\end{equation}
	with the integral structure on $H(X_K)$ provided by (\ref{canonical}).  
	 As in \S\ref{dualizingsec},  let $\omega_{X_T/T}^{\times,\bullet}$ be the two-term complex 
	 $d_T\log :\O_{X_T}^{\times}\rightarrow \omega_{X_T/T}$ defined by $d_T\log(u)=u^{-1} \cdot d_T(u)$, and write 
	 $\R^1f_*\omega_{X/S}^{\times}$ and $R^1f_*\O_X^{\times}$, respectively, for the fppf sheaves associated to the group functors
	on $S$-schemes 
	\begin{equation*}
		T\rightsquigarrow \H^1(X_T,\omega_{X_T/T}^{\times,\bullet})\qquad\text{and}\qquad T\rightsquigarrow H^1(X_T,\O_{X_T}^{\times}).
	\end{equation*}
	  By Lemma \ref{identify}, the exact sequence (\ref{passtoH})
	is naturally isomorphic to the exact sequence of fppf abelian sheaves
	\begin{equation*}
		\xymatrix{
			0\ar[r] & {f_*\omega_{X/S}}\ar[r] & {\R^1f_* \omega_{X/S}^{\times,\bullet}} \ar[r] & R^1f_*\O_X^{\times}
		}
	\end{equation*}	
	  obtained by sheafifying (\ref{lext}).
	Thus, the proof of Corollary \ref{integralcompare} is completed by:
	
	\begin{lemma}
	 There is a natural isomorphism of exact sequences of free $R$-modules
		\begin{equation}
			\xymatrix{
				0\ar[r] & H^0(X,\omega_{X/S}) \ar[r]\ar[d]^-{\simeq} & H^1(X)\ar[d]^-{\simeq} \ar[r] & H^1(X,\O_X)\ar[r]\ar[d]^-{\simeq} & 0\\
				0\ar[r] & \Lie(f_*\omega_{X/S})\ar[r] & \Lie( \R^1f_*\omega^{\times,\bullet}_{X/S}) \ar[r] & \Lie(R^1f_*\O_{X}^{\times})  & 
			}\label{liecalculation}
		\end{equation}	
	\end{lemma}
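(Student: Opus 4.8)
The plan is to prove the lemma by identifying the two outer vertical maps directly and then extracting the middle identification from an infinitesimal short exact sequence of two-term complexes. First I would pin down the outer terms. Since $f_*\omega_{X/S}$ is the vector group attached to the locally free $\O_S$-module $f_*\omega_{X/S}$, its Lie algebra is its module of global sections, giving the canonical identification $\Lie(f_*\omega_{X/S})\simeq H^0(X,\omega_{X/S})$. For the right-hand term, recall that $R^1f_*\O_X^{\times}\simeq \Pic_{X/S}$ by Lemma \ref{identify}, so by \cite[\S8.4, Theorem 1]{BLR} (or \cite[Proposition 1.3 (b)]{LiuNeron}) we have $\Lie(R^1f_*\O_X^{\times})=\Lie(\Pic_{X/S})\simeq R^1f_*\O_X=H^1(X,\O_X)$. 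Thus the entire content of the lemma is the identification of the middle term, compatibly with these two maps.

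The device for the middle term is a short exact sequence of two-term complexes over the first infinitesimal thickening of $S$. Write $S[\epsilon]$ for the dual-number base, $X':=X_{S[\epsilon]}$ (so that $|X'|=|X|$ and $\O_{X'}=\O_X\oplus\epsilon\O_X$), and view reduction modulo $\epsilon$ as a morphism of complexes $\omega^{\times,\bullet}_{X'/S[\epsilon]}\to\omega^{\times,\bullet}_{X/S}$. Using $1+\epsilon v\mapsto v$ in degree $0$, $\epsilon\alpha\mapsto\alpha$ in degree $1$, and the elementary computation $(1+\epsilon v)^{-1}d(1+\epsilon v)=\epsilon\,dv$, I would identify the kernel of this reduction with the additive complex $\omega^{\bullet}_{X/S}=[\O_X\xrightarrow{d}\omega_{X/S}]$ introduced in Section \ref{intcomparison}. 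This produces a short exact sequence of complexes of abelian sheaves on $|X|$
\begin{equation*}
	\xymatrix{
		0\ar[r] & {\omega^{\bullet}_{X/S}} \ar[r] & {\omega^{\times,\bullet}_{X'/S[\epsilon]}} \ar[r] & {\omega^{\times,\bullet}_{X/S}} \ar[r] & 0
	}.
\end{equation*}

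Passing to hypercohomology, the associated long exact sequence yields the identification provided the degree-$0$ connecting map vanishes, and this is where cohomological flatness enters decisively. Since $f$ is cohomologically flat we have $f_*\O_X=\O_S$, so $d\log$ annihilates all global units (as observed in the construction of $(\ref{dtlog})$) and $\H^0(X,\omega^{\times,\bullet}_{X/S})=\Gamma(X,\O_X^{\times})=R^{\times}$; the same applies over $S[\epsilon]$, giving $\H^0(X',\omega^{\times,\bullet}_{X'/S[\epsilon]})=(R[\epsilon]/\epsilon^2)^{\times}$, and the reduction $(R[\epsilon]/\epsilon^2)^{\times}\to R^{\times}$ is split surjective. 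Hence the degree-$0$ connecting map is zero and $\H^1(X,\omega^{\bullet}_{X/S})=H^1(X/R)$ injects onto the kernel of reduction on $\H^1$. To see that this kernel is the middle Lie algebra, I would invoke $\R^1f_*\omega^{\times,\bullet}_{X/S}\simeq\Pic^{\natural}_{X/S}$ (Lemma \ref{identify}), the identity $\Lie(\Pic^{\natural}_{X/S})=\Lie(\Pic^{\natural,0}_{X/S})$ noted above, and the representability of $\Pic^{\natural,0}_{X/S}$ as a smooth $S$-scheme (Theorem \ref{rep}): the tangent space at the identity is then represented by honestly-defined pairs $(\L,\nabla)$ on $X'$ trivial modulo $\epsilon$, which are computed by the \v{C}ech hypercocycles of $\omega^{\times,\bullet}_{X'/S[\epsilon]}$ exactly as in Lemma \ref{identify}.

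Finally, compatibility with the outer maps I would verify on explicit \v{C}ech hypercocycles $(\{a_{ij}\},\{\omega_i\})$ for $\omega^{\bullet}_{X/S}$: the inclusion of complexes carries such a cocycle to $(\{1+\epsilon a_{ij}\},\{\epsilon\omega_i\})$, so the left square reproduces $\omega_0\mapsto(\O_{X'},d+\epsilon\omega_0)$ and the right square reproduces ``forget the connection,'' matching the Hodge-filtration maps of $(\ref{canonical})$. The two outer verticals being isomorphisms, the middle vertical is then an isomorphism as well and all three respect the exact structure. The main obstacle is precisely the bookkeeping of the preceding paragraph: confirming that the sheaf-theoretic Lie algebra of $\R^1f_*\omega^{\times,\bullet}_{X/S}$ is genuinely computed by the kernel of reduction on honest hypercohomology (which is exactly why the representability of Theorem \ref{rep} must be invoked, to circumvent fppf-sheafification issues), together with the vanishing of the degree-$0$ connecting map, which is the one point where cohomological flatness is essential.
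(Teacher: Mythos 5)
Your core device is the one the paper uses: the dual-numbers short exact sequence of two-term complexes
$0\to \omega^{\bullet}_{X/S}\to \omega^{\times,\bullet}_{X_{S[\epsilon]}/S[\epsilon]}\to \omega^{\times,\bullet}_{X/S}\to 0$,
with the kernel identified via $1+\epsilon v\mapsto v$, $\epsilon\alpha\mapsto\alpha$ and $d\log(1+\epsilon v)=\epsilon\,dv$, and the middle Lie algebra read off as the kernel of reduction on hypercohomology. Your variation at the exactness step is legitimate: where the paper gets exact rows for free because the sequence is split by the canonical section $\Z\to\Z[\epsilon]/(\epsilon^2)$ (so $\H^1$ is left exact on it), you instead run the long exact sequence and kill the degree-zero connecting map using cohomological flatness ($\Gamma(X,\O_X^\times)=R^\times$, $\Gamma(X_{S[\epsilon]},\O^\times)=(R[\epsilon]/\epsilon^2)^\times$, and reduction of units is surjective); both arguments work.

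There are, however, two genuine problems as written. First, your stated resolution of the presheaf-versus-sheaf issue is a non sequitur: representability of $\Pic^{\natural,0}_{X/S}$ (Theorem \ref{rep}) does \emph{not} imply that $S[\epsilon]$-points of the fppf sheaf restricting to the identity over $S$ are represented by honest pairs $(\L,\nabla)$ on $X_{S[\epsilon]}$. Compare the ordinary Picard functor: $\Pic_{X/S}(T)$ can fail to be $\Pic(X_T)/\Pic(T)$ even when $\Pic_{X/S}$ is a scheme (a conic without a rational point already shows this), and the identification $\Lie(\Pic_{X/S})\simeq H^1(X,\O_X)$ of \cite[\S 8.4, Theorem 1]{BLR}, \cite[Proposition 1.3 (b)]{LiuNeron} is proved from cohomological flatness and the vanishing of the relevant obstruction over the affine, square-zero thickened base, not from representability. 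Fortunately your own closing step repairs this if you reorganize: take the middle vertical map to be the \emph{natural} composite of your identification $H^1(X/R)\simeq\ker(\text{presheaf reduction})$ with the sheafification map into $\Lie(\R^1f_*\omega^{\times,\bullet}_{X/S})$; the bottom row of (\ref{liecalculation}) is exact because $\scrLie$ is left exact applied to the sheafification of (\ref{lext}); and then the five lemma (in the form: top row short exact, bottom row left exact, outer verticals isomorphisms) forces the middle map to be bijective, the outer isomorphisms being exactly your citations (vector group for $f_*\omega_{X/S}$, and \cite{BLR}/\cite{LiuNeron} for $\Pic_{X/S}$). No claim that tangent vectors are honest pairs is ever needed, so the appeal to Theorem \ref{rep} should simply be deleted. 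Second, you never verify $R$-linearity, yet the lemma asserts an isomorphism of free $R$-modules: the $R$-module structure on each $\Lie$ is defined through the functoriality $\epsilon\mapsto s\cdot\epsilon$, so bijectivity as abelian groups does not suffice, and $R$-linearity of the two outer maps does not formally propagate to the middle one. This is precisely the point to which the paper devotes the final portion of its proof (the commutation of the map $u_s^*$ with the inclusion of complexes), and your argument needs the analogous check.
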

	
	\begin{proof}

	By construction, the exact sequence (\ref{canonical}) results from the Hodge to de~Rham spectral sequence attached to 
	the evident filtration of $\omega_{X/S}^{\bullet}$.  
	Now the canonical section $\Z\rightarrow \Z[\epsilon]/(\epsilon^2)$ to the quotient map $\epsilon\mapsto 0$
	induces a canonically split exact sequence of filtered two-term (vertical) complexes
	\begin{equation*}
		\xymatrix{
			0\ar[r] & {\O_{X}}\ar[r]^-{h \mapsto 1+\epsilon h}\ar[d]_-{d} & {\O_{X_{S[\epsilon]}}^{\times}}\ar@<1ex>[r]^-{\epsilon\mapsto 0}\ar[d]_-{d\log} &
			 \ar@<1ex>[l]{\O_{X}^{\times}}\ar[r]\ar[d]^-{d\log} & 1\\
			0\ar[r] & \omega_{X/S}\ar[r]_-{\cdot\epsilon} & \omega_{X_{S[\epsilon]}/S[\epsilon]}\ar@<1ex>[r]^-{\epsilon\mapsto 0} & \ar@<1ex>[l]\omega_{X/S}\ar[r] & 0
		}\label{splitomegas}
	\end{equation*}
	so passing to cohomology yields the commutative diagram with exact rows and columns
	\begin{equation}
		\xymatrix{
	&               0\ar[d]        	                  & 			0\ar[d] 					& 0\ar[d] 				 \\
0\ar[r]& {H^0(X,\omega_{X/S})}\ar[r]\ar[d] &{H^0(X_{S[\epsilon]},\omega_{X_{S[\epsilon]}/S[\epsilon]})}\ar[r]\ar[d] &{H^0(X,\omega_{X/S})}\ar[d]\\ 
0\ar[r]& {\H^1 (X,\omega^{\bullet}_{X/S})}\ar[r]\ar[d] &{\H^1(X_{S[\epsilon]},\omega^{\times,\bullet}_{X_{S[\epsilon]}/S[\epsilon]})}\ar[r]\ar[d] 	
& {\H^1 (X, \omega^{\bullet}_{X/S})}\ar[d] \\
0\ar[r]& {H^1(X,\O_X)}\ar[r] & {H^1(X_{S[\epsilon]},\O_{X_{S[\epsilon]}}^{\times})}\ar[r] & {H^1(X,\O_X^{\times})} \\
		}\label{liemapdefinition}
	\end{equation}
	where the zeroes in the left column result from the splitting (i.e. $\H^1 $ is left exact on {\em split} short exact sequences).
	We conclude that we have an isomorphism of exact sequences of abelian 
	groups as in (\ref{liecalculation}).  It remains to show that this is in fact an $R$-linear isomorphism.
	Recall that for any group functor $G$ on $S$-schemes, 
	the multiplication on $\Lie(G)$ by $\O_S(S)$ is induced by the functoriality of $G$ from the map $\O_S(S)\rightarrow \End_{S}(S[\epsilon])$
	sending $s\in \O_S(S)$ to the self-map $u_s$ of $S[\epsilon]$ that is induced by $\epsilon\mapsto s\cdot\epsilon$.
	Thus, the fact that the map (\ref{liecalculation}) defined by  (\ref{liemapdefinition}) is a map of $R$-modules 
	amounts to the assertion that for any $s\in \O_S(S)$ the following diagram of filtered complexes with exact rows
	\begin{equation}
		\xymatrix{
			0\ar[r] & {\omega^{\bullet}_{X/S}}\ar[r]\ar[d]^-{\cdot s} & {\omega^{\times,\bullet}_{X_{S[\epsilon]}/S[\epsilon]}}\ar[r]\ar[d]^-{u_s^*} & {\omega^{\times,\bullet}_{X/S}}
			\ar[r]\ar[d]^-{\id} & 0\\
			0\ar[r] & {\omega^{\bullet}_{X/S}}\ar[r]  & {\omega^{\times,\bullet}_{X_{S[\epsilon]}/S[\epsilon]}}\ar[r] & {\omega^{\times,\bullet}_{X/S}}\ar[r] & 0
		}
	\end{equation}
	commutes.  This assertion is easily checked.	
\end{proof}

\bibliography{mybib}

\def\cprime{$'$}
\providecommand{\bysame}{\leavevmode\hbox to3em{\hrulefill}\thinspace}
\providecommand{\MR}{\relax\ifhmode\unskip\space\fi MR }
\providecommand{\MRhref}[2]{%
  \href{http://www.ams.org/mathscinet-getitem?mr=#1}{#2}
}
\providecommand{\href}[2]{#2}
\begin{thebibliography}{10}

\bibitem{Begueri}
Lucile B{\'e}gueri, \emph{Dualit\'e sur un corps local \`a corps r\'esiduel
  alg\'ebriquement clos}, M\'em. Soc. Math. France (N.S.) (1980/81), no.~4,
  121.

\bibitem{Bertapelle}
Alessandra Bertapelle, \emph{Deligne's duality for de~{R}ham realizations of
  1-motives}, {\tt arXiv:math/0506344v3 [math.AG]} (2008).

\bibitem{BertapelleBosch}
Alessandra Bertapelle and Siegfried Bosch, \emph{Weil restriction and
  {G}rothendieck's duality conjecture}, J. Algebraic Geom. \textbf{9} (2000),
  no.~1, 155--164.

\bibitem{BBM}
Pierre Berthelot, Lawrence Breen, and William Messing, \emph{Th\'eorie de
  {D}ieudonn\'e cristalline. {II}}, Lecture Notes in Mathematics, vol. 930,
  Springer-Verlag, Berlin, 1982.

\bibitem{BoschComponent}
Siegfried Bosch, \emph{Component groups of abelian varieties and
  {G}rothendieck's duality conjecture}, Ann. Inst. Fourier (Grenoble)
  \textbf{47} (1997), no.~5, 1257--1287.

\bibitem{BoschLorenzini}
Siegfried Bosch and Dino Lorenzini, \emph{Grothendieck's pairing on component
  groups of {J}acobians}, Invent. Math. \textbf{148} (2002), no.~2, 353--396.

\bibitem{BLR}
Siegfried Bosch, Werner L{\"u}tkebohmert, and Michel Raynaud, \emph{N\'eron
  models}, Ergebnisse der Mathematik und ihrer Grenzgebiete (3) [Results in
  Mathematics and Related Areas (3)], vol.~21, Springer-Verlag, Berlin, 1990.

\bibitem{CaisDualizing}
Bryden Cais, \emph{Canonical integral structures on the de~{R}ham cohomology of
  curves}, Ann. Inst. Fourier (Grenoble) \textbf{59} (2009).

\bibitem{ColemanComparison}
Robert~F. Coleman, \emph{Hodge-{T}ate periods and {$p$}-adic abelian
  integrals}, Invent. Math. \textbf{78} (1984), no.~3, 351--379.

\bibitem{coleman-ext}
\bysame, \emph{Vectorial extensions of {J}acobians}, Ann. Inst. Fourier
  (Grenoble) \textbf{40} (1990), no.~4, 769--783.

\bibitem{coleman-biext}
\bysame, \emph{The universal vectorial bi-extension and {$p$}-adic heights},
  Invent. Math. \textbf{103} (1991), no.~3, 631--650.

\bibitem{colemanduality}
\bysame, \emph{Duality for the de {R}ham cohomology of an abelian scheme}, Ann.
  Inst. Fourier (Grenoble) \textbf{48} (1998), no.~5, 1379--1393.

\bibitem{GDBC}
Brian Conrad, \emph{Grothendieck duality and base change}, Lecture Notes in
  Mathematics, vol. 1750, Springer-Verlag, Berlin, 2000.

\bibitem{CES}
Brian Conrad, Bas Edixhoven, and William Stein, \emph{{$J\sb 1(p)$} has
  connected fibers}, Doc. Math. \textbf{8} (2003), 331--408 (electronic).

\bibitem{DeligneHodge3}
Pierre Deligne, \emph{Th\'eorie de {H}odge. {III}}, Inst. Hautes \'Etudes Sci.
  Publ. Math. (1974), no.~44, 5--77.

\bibitem{DM}
Pierre Deligne and David Mumford, \emph{The irreducibility of the space of
  curves of given genus}, Inst. Hautes \'Etudes Sci. Publ. Math. (1969),
  no.~36, 75--109.

\bibitem{FMUnpub}
Jean-Marc Fontaine and William Messing, Unpublished.

\bibitem{tameness}
Benedict~H. Gross, \emph{A tameness criterion for {G}alois representations
  associated to modular forms (mod {$p$})}, Duke Math. J. \textbf{61} (1990),
  no.~2, 445--517.

\bibitem{SGA2}
Alexander Grothendieck, \emph{Cohomologie locale des faisceaux coh\'erents et
  th\'eor\`emes de {L}efschetz locaux et globaux {$(SGA$} {$2)$}},
  North-Holland Publishing Co., Amsterdam, 1968, Augment\'e d'un expos\'e par
  Mich\`ele Raynaud, S\'eminaire de G\'eom\'etrie Alg\'ebrique du Bois-Marie,
  1962, Advanced Studies in Pure Mathematics, Vol. 2.

\bibitem{SGA7.1}
\bysame, \emph{Groupes de monodromie en g\'{e}om\'{e}trie alg\'{e}brique. {I}},
  Springer-Verlag, Berlin, 1972, S\'eminaire de G\'eom\'etrie Alg\'ebrique du
  Bois-Marie 1967--1969 (SGA 7 I), Lecture Notes in Mathematics, Vol. 288.

\bibitem{GrothBarsotti}
\bysame, \emph{Groupes de {B}arsotti-{T}ate et cristaux de {D}ieudonn\'e}, Les
  Presses de l'Universit\'e de Montr\'eal, Montreal, Que., 1974, S{\'e}minaire
  de Math{\'e}matiques Sup{\'e}rieures, No. 45 ({\'E}t{\'e}, 1970).

\bibitem{EGA}
Alexander Grothendieck and Jean Dieudonn{\'e}, \emph{\'{E}l\'ements de
  g\'eom\'etrie alg\'ebrique}, Inst. Hautes \'Etudes Sci. Publ. Math.
  (1960--1967), no.~4,8,11,17,20,24,28,37.

\bibitem{RD}
Robin Hartshorne, \emph{Residues and duality}, Lecture notes of a seminar on
  the work of A. Grothendieck, given at Harvard 1963/64. With an appendix by P.
  Deligne. Lecture Notes in Mathematics, No. 20, Springer-Verlag, Berlin, 1966.

\bibitem{KM}
Nicholas~M. Katz and Barry Mazur, \emph{Arithmetic moduli of elliptic curves},
  Annals of Mathematics Studies, vol. 108, Princeton University Press,
  Princeton, NJ, 1985.

\bibitem{Lipman2}
Joseph Lipman, \emph{Desingularization of two-dimensional schemes}, Ann. Math.
  (2) \textbf{107} (1978), no.~1, 151--207.

\bibitem{LiuBook}
Qing Liu, \emph{Algebraic geometry and arithmetic curves}, Oxford Graduate
  Texts in Mathematics, vol.~6, Oxford University Press, Oxford, 2002,
  Translated from the French by Reinie Ern\'e, Oxford Science Publications.

\bibitem{LiuNeron}
Qing Liu, Dino Lorenzini, and Michel Raynaud, \emph{N\'eron models, {L}ie
  algebras, and reduction of curves of genus one}, Invent. Math. \textbf{157}
  (2004), no.~3, 455--518.

\bibitem{matsumura}
Hideyuki Matsumura, \emph{Commutative ring theory}, second ed., Cambridge
  Studies in Advanced Mathematics, vol.~8, Cambridge University Press,
  Cambridge, 1989, Translated from the Japanese by M. Reid.

\bibitem{MM}
Barry Mazur and William Messing, \emph{Universal extensions and one dimensional
  crystalline cohomology}, Springer-Verlag, Berlin, 1974, Lecture Notes in
  Mathematics, Vol. 370.

\bibitem{MazTate}
Barry Mazur and John Tate, \emph{Canonical height pairings via biextensions},
  Arithmetic and geometry, {V}ol. {I}, Progr. Math., vol.~35, Birkh\"auser
  Boston, Boston, MA, 1983, pp.~195--237.

\bibitem{McCallum}
William~G. McCallum, \emph{Duality theorems for {N}\'eron models}, Duke Math.
  J. \textbf{53} (1986), no.~4, 1093--1124.

\bibitem{Messing}
William Messing, \emph{The crystals associated to {B}arsotti-{T}ate groups:
  with applications to abelian schemes}, Springer-Verlag, Berlin, 1972, Lecture
  Notes in Mathematics, Vol. 264.

\bibitem{Messing2}
\bysame, \emph{The universal extension of an abelian variety by a vector
  group}, Symposia {M}athematica, {V}ol. {XI} ({C}onvegno di {G}eometria,
  {INDAM}, {R}ome, 1972), Academic Press, London, 1973, pp.~359--372.

\bibitem{ADT}
James~S. Milne, \emph{Arithmetic duality theorems}, Perspectives in
  Mathematics, vol.~1, Academic Press Inc., Boston, MA, 1986.

\bibitem{MilneJacobians}
\bysame, \emph{Jacobian varieties}, Arithmetic geometry (Storrs, Conn., 1984),
  Springer, New York, 1986, pp.~167--212.

\bibitem{GIT}
David Mumford, John Fogarty, and Frances Kirwan, \emph{Geometric invariant
  theory}, third ed., Ergebnisse der Mathematik und ihrer Grenzgebiete (2)
  [Results in Mathematics and Related Areas (2)], vol.~34, Springer-Verlag,
  Berlin, 1994.

\bibitem{Murre}
Jakob~P. Murre, \emph{On generalized {P}icard varieties}, Math. Ann.
  \textbf{145} (1961/1962), 334--353.

\bibitem{oort}
Frans Oort, \emph{Commutative group schemes}, Lecture Notes in Mathematics,
  vol.~15, Springer-Verlag, Berlin, 1966.

\bibitem{Pepin}
C\'edric P\'epin, \emph{Grothendieck's pairing for {J}acobians and index of
  curves}, Preprint (2008).

\bibitem{RaynaudPic}
Michel Raynaud, \emph{Sp\'ecialisation du foncteur de {P}icard}, Inst. Hautes
  \'Etudes Sci. Publ. Math. (1970), no.~38, 27--76.

\bibitem{Rosenlicht}
Maxwell Rosenlicht, \emph{Extensions of vector groups by abelian varieties},
  Amer. J. Math. \textbf{80} (1958), 685--714.

\bibitem{SGA3vol1}
\emph{Sch\'emas en groupes. {I}: {P}ropri\'et\'es g\'en\'erales des sch\'emas
  en groupes}, S\'eminaire de G\'eom\'etrie Alg\'ebrique du Bois Marie 1962/64
  (SGA 3). Dirig\'e par M. Demazure et A. Grothendieck. Lecture Notes in
  Mathematics, Vol. 151, Springer-Verlag, Berlin, 1970.

\bibitem{Tate}
John Tate, \emph{W{C}-groups over {$p$}-adic fields}, S\'eminaire {B}ourbaki,
  {V}ol.\ 4, Soc. Math. France, Paris, 1995, pp.~265--277.

\bibitem{Wintenberger}
Jean-Pierre Wintenberger, \emph{Th\'eor\`eme de comparaison {$p$}-adique pour
  les sch\'emas ab\'eliens. {I}. {C}onstruction de l'accouplement de
  p\'eriodes}, Ast\'erisque (1994), no.~223, 349--397, P{\'e}riodes $p$-adiques
  (Bures-sur-Yvette, 1988).

\end{thebibliography}
\end{document}